\definecolor{seagreen}{RGB}{46,139,87}
\definecolor{maroon}{RGB}{128,0,0}
\definecolor{darkviolet}{RGB}{148,0,211}
\definecolor{twelve}{RGB}{100,100,170}
\definecolor{thirteen}{RGB}{100,150,50}
\definecolor{fourteen}{RGB}{200,0,0}
\definecolor{fifteen}{RGB}{0,200,0}
\definecolor{sixteen}{RGB}{0,0,200}
\definecolor{seventeen}{RGB}{200,0,200}
\definecolor{eighteen}{RGB}{0,200,200}
\newtheorem{thm}{Theorem}[section]
\newtheorem{lem}[thm]{Lemma}
\newtheorem{cor}[thm]{Corollary}
\newtheorem{prop}[thm]{Proposition}
\newtheorem{rem2}[thm]{Remark}
\newtheorem{exm}[thm]{Example}
\newtheorem{fig}[thm]{Figure}
\newtheorem{conv}[thm]{Convention}
\theoremstyle{definition}
\newtheorem{defin}[thm]{Definition}
\def\a{\mathbb{A}}
\def\c{\mathbb{C}}
\def\f{\mathbb{F}}
\def\g{\mathbb{G}}
\def\m{\mathbb{M}}
\def\p{\mathbb{P}}
\def\z{\mathbb{Z}}
\def\cd{\mathcal{D}}
\def\Ext{\operatorname{Ext}}
\author{J.D. Quigley}
\title{The motivic Mahowald invariant}
\begin{document}
\maketitle

\section*{Abstract}
The classical Mahowald invariant is a method for producing nonzero classes in the stable homotopy groups of spheres from classes in lower stems. We study the Mahowald invariant in the setting of motivic stable homotopy theory over $Spec(\c)$. We compute a motivic version of the $C_2$-Tate construction for various motivic spectra, and show that this construction produces ``blueshift" in these cases. We use these computations to show that the Mahowald invariant of $\eta^i$, $i \geq 1$, is the first element in Adams filtration $i$ of the $w_1$-periodic families constructed by Andrews ~\cite{And14}. This provides an exotic periodic analog of Mahowald and Ravenel's computation ~\cite{MR93} that the classical Mahowald invariant of $2^i$, $i \geq 1$, is the first element in Adams filtration $i$ of the $v_1$-periodic families constructed by Adams ~\cite{Ada66}. 

\section{Introduction}

The classical Mahowald invariant is a method for producing nonzero classes in the stable homotopy groups of spheres from classes in lower stems. The classical Mahowald invariant is defined using Lin's Theorem ~\cite{LDMA80}, which says that after $2$-completion, there is an equivalence of spectra
$$S^{0} \simeq \lim_{\underset{n}{\longleftarrow}} \Sigma RP^\infty_{-n}$$
between the sphere spectrum and a homotopy limit of stunted real projective spectra. Let $\alpha \in \pi_t(S^0)_{(2)}$ be a class in the $2$-primary stable stems. Then by the above equivalence, there is some minimal $N>0$ so that the map 
$$S^t \to S^0 \to \Sigma RP^\infty_{-N}$$
is essential. This gives rise to a nontrivial map $(S^t \to S^{-N+1}) \in \pi_{t+N-1}(S^0)$, where $S^{-N+1}$ is the fiber of the collapse map $\Sigma RP^\infty_{-N} \to \Sigma RP^\infty_{-(N-1)}$. The nontrivial map above is the classical Mahowald invariant of $\alpha$, which we denote $M^{cl}(\alpha)$. 

In this paper, we define and analyze an analog of the Mahowald invariant in the setting of motivic stable homotopy theory over $Spec(\c)$. We focus on the interaction between the Mahowald invariant and chromatic homotopy theory. More precisely, we are interested in the interaction between the Mahowald invariant and periodic families in the stable stems. This has been studied extensively in the classical setting. In ~\cite{MS87}, Mahowald and Shick defined a chromatic filtration on the Adams spectral sequence $E_2$-page, and in ~\cite{Shi87}, Shick showed that an algebraic version (with input and output in the $E_2$-page of the Adams spectral sequence) of the classical Mahowald invariant took $v_n$-periodic classes to $v_n$-torsion classes. This is the algebraic form of a conjecture of Mahowald and Ravenel that, roughly speaking, the Mahowald invariant of a $v_n$-periodic class is $v_n$-torsion ~\cite[Conjecture 12]{MR87}.

The previous conjecture has been verified for several cases. Mahowald and Ravenel  showed in ~\cite{MR93} that the classical Mahowald invariant of $2^i$, $i \geq 1$, is the first element in the stable stems in Adams filtration $i$. Subsequent work of Sadofsky ~\cite{Sad92} showed that for $p>3$ one has $\beta_k \in M^{cl}(\alpha_k)$. Computations of Behrens at $p=3$ ~\cite{Beh06} and later at $p=2$ ~\cite{Beh07} provided further evidence that the classical Mahowald invariant of a $v_n$-periodic class in the stable stems is $v_{n+1}$-periodic. In this paper, we will produce motivic analogs of Mahowald and Ravenel's computations of the Mahowald invariant of $2^i$ for $i \geq 1$.

To further explain our goal, we must explain some background on periodicity in classical and motivic stable homotopy theory. By ``periodic family," we will mean a family of elements produced from iterating a non-nilpotent self-map on a finite complex. Classically, this method was first used by Adams in ~\cite{Ada66}, where he used the non-nilpotent self-map $v^4_1 : \Sigma^8 V(0) \to V(0)$ of the mod $2$ Moore spectrum to produce $v_1$-periodic families inside of the image of $J$. The iterated self-map construction was studied at higher heights by Smith ~\cite{Smi70}, Toda ~\cite{Tod71}, and Miller-Ravenel-Wilson  ~\cite{MRW77}. Much has been written about self-maps of finite complexes; see for example the work of Hopkins-Smith ~\cite{HS98}, and the recent work of Behrens-Hill-Hopkins-Mahowald ~\cite{BHHM08} and Bhattacharya-Egger ~\cite{BE16} at $p=2$. 

In the motivic setting over a field of characteristic zero, Levine showed that the classical stable stems sit inside of the motivic stable stems ~\cite{Lev14}. Therefore, all classical periodic families also exist motivically. However, there are non-classical classes in the motivic stable stems, some of which form``exotic" periodic families. The first instance of exotic periodicity is the non-nilpotence of the algebraic Hopf invariant one element $\eta_{alg} \in \pi_{1,1}(S^{0,0})$, which was proven by Morel in ~\cite{Mor12}. In analogy with the work of Adams, one can ask if the cofiber of $\eta$ admits a non-nilpotent self-map. In ~\cite{And14}, Andrews showed that there is a non-nilpotent self-map $w^4_1 : \Sigma^{20,12} C \eta \to C\eta$, and he used this to produce $w_1$-periodic families in the motivic stable stems. This suggests that $\eta$ deserves to be called $w_0$, and that there should be $w_n$-periodic families for all $n$. This particular form of exotic periodicity has been studied further in the work of Gheorghe ~\cite{Ghe17b} at $p=2$ and forthcoming work of Krause ~\cite{Kra18} at all primes.	

We can now state our main result. Reinterpreted, Mahowald and Ravenel's computation says that the classical Mahowald invariant of the $v_0$-periodic class $2^i$ is the first $v_1$-periodic class in Adams filtration $i$. In addition to proving an analogous result in the motivic setting, we prove an exotic analog. Precisely, we show that the motivic Mahowald invariant of the $w_0$-periodic class $\eta^i$ is the first $w_1$-periodic class in Adams filtration $i$. 

\subsection{Outline.} Our starting point is the motivic analog of Lin's Theorem proven by Gregersen in ~\cite{Gre12}, where he constructed a motivic analog of $RP^\infty_{-\infty}$ which exhibits the motivic sphere spectrum as a homotopy limit of motivic stunted projective spectra. In Section 2, we begin by recalling the necessary background from Gregersen's thesis. We then define the \emph{motivic Mahowald invariant} in analogy with the classical Mahowald invariant and compute the motivic Mahowald invariants of the algebraic Hopf invariant one elements of ~\cite{DI13}. We then define the \emph{motivic $C_2$-Tate construction} of a motivic ring spectrum $E$, denoted $E^{tC_2}$, and use this to define some approximations to the motivic Mahowald invariant.

In Section 3, we compute the motivic $C_2$-Tate construction of several motivic ring spectra by comparing the Atiyah-Hirzebruch spectral sequence and the motivic Adams spectral sequence. In particular, we compute the motivic $C_2$-Tate construction for two motivic analogs of $ko$; the analogous classical computations are due to Davis-Mahowald ~\cite{DM84}. Recall the motivic ring spectrum $kq$ constructed by Isaksen-Shkembi in ~\cite{IS11} with motivic cohomology $H^{**}(kq) \cong A//A(1)$. We show that there is an isomorphism in motivic homotopy groups
$$\pi_{**}(kq^{tC_2}) \simeq \lim_{\underset{n}{\longleftarrow}} \bigoplus_{i \geq -n} \pi_{**}(\Sigma^{4i,2i} H\z_2),$$
where $H\z$ is the motivic Eilenberg-MacLane spectrum of the integers with mod two motivic cohomology $H^{**}(H\z) \cong A//A(0)$. This computation is used in later sections to compute the motivic Mahowald invariant of $2^i$.

In order to compute the motivic Mahowald invariant of $\eta^i$, we construct a motivic ring spectrum that detects $w_1$-periodic elements. More precisely, we need a spectrum whose Hurewicz image contains some of the $w_1$-periodic families constructed by Andrews. This spectrum is produced using the forthcoming work of Gheorghe-Wang-Xu ~\cite{GWXPP}, where roughly speaking, they prove that there is an equivalence of categories between $C\tau$-modules and $BP_*BP$-comodules. Here, $C\tau$ is the cofiber of $\tau \in \pi_{0,-1}(S^{0,0})$ studied by Gheorghe in ~\cite{Ghe17}. We produce a $C\tau$-module called $wko$ by writing down the corresponding $BP_*BP$-comodule. This $C\tau$-module has the property that $\overline{H}^{**}(wko) \cong \overline{A}//\overline{A}(1)$, where $\overline{(-)}$ indicates that we are in the $C\tau$-linear setting discussed in ~\cite[Section 5]{Ghe17}. We show that there is an isomorphism in motivic homotopy groups
$$\pi_{**}(wko^{tC_2}) \cong \lim_{\underset{n}{\longleftarrow}} \bigoplus_{i \geq -n} \left( \Sigma^{4i,2i} \pi_{**}(wBP\langle 0\rangle) \oplus \Sigma^{4i-1,2i-1} \pi_{**}(wBP\langle 0\rangle) \right),$$
where $wBP\langle 0 \rangle$ is a spectrum constructed by Gheorghe in ~\cite{Ghe17b} satisfying $\pi_{**}(wBP\langle 0 \rangle) \cong \f_2[w_0]$. 

In Section 4, we use the motivic $C_2$-Tate construction computations from the previous section to compute approximations of the motivic Mahowald invariant based on $kq$ and $wko$. These computations are analogous to ~\cite[Theorem 2.16]{MR93}, where they compute these approximations in the classical setting using $ko$. 

In Section 5, we lift the $kq$- and $wko$-based approximations to full computations of the motivic Mahowald invariant of $2^i$ and $\eta^i$ for all $i \geq 1$. For values of $i$ where the motivic Mahowald invariant lands in the Hurewicz image for $kq$ or $wko$, this lifting is trivial in view of Proposition ~\ref{lift}. For the remaining values of $i$, we pass through a series of approximations obtained by varying the cohomology theory and the filtration of the motivic analog of $RP^\infty_{-N}$. This passage proceeds by induction on $i$, with the induction step completed by comparing Adams spectral sequences in the classical, motivic, and $C\tau$-linear settings. In particular, we use Adams' identification of the $v^4_1$-periodicity operator as a Massey product ~\cite{Ada66b} along with a theorem of Isaksen ~\cite[Theorem 2.1.12]{Isa14} to compare the relevant periodic families at the level of Adams $E_2$-pages. 

\subsection{Acknowledgements.} The author thanks Mark Behrens for his guidance throughout this project and for careful readings of several drafts. The author also thanks Bogdan Gheorghe, Prasit Bhattacharya, Jens Jakob Kjaer, and Jonas Irgens Kylling for helpful discussions. The author was partially supported by NSF grant DMS-1547292.

\section{Definition and elementary computations}

\subsection{Background from motivic homotopy theory}

We begin by recalling some work of Voevodsky ~\cite{Voe03}, Morel-Voevodsky ~\cite{MV99}, and Gregersen ~\cite{Gre12}. We will work in the category of motivic spaces or spectra over $Spec(\c)$ at the prime $p=2$, with everything implicitly completed at $2$. Our goal is to sketch the proof of a motivic analog of Lin's Theorem (Theorem ~\ref{lin} below) which will be used to define the motivic Mahowald invariant. 

Classically, Lin's Theorem is proven by exhibiting an isomorphism between the continuous cohomology of $RP^\infty_{-\infty} = {{\underset{\underset{n}{\longleftarrow}}{\lim}}} RP^\infty_{-n}$ and the \emph{Singer construction} of $\f_2$. By continuous cohomology, we mean the colimit of the cohomology groups $H^*(RP^\infty_{-n})$, which in general does not agree with the cohomology of the homotopy inverse limit $RP^\infty_{-\infty}$. The essential property of the Singer construction is that it associates to a module over the Steenrod algebra $A$ an $Ext_A$-equivalent module. In the proof of Lin's Theorem, this implies that the inverse limit Adams spectral sequence for $RP^\infty_{-\infty}$ is isomorphic to the Adams spectral sequence for $S^{-1}$ from the $E_2$-page onwards. The definition of the classical Singer construction for $\f_2$ can be found in ~\cite{LDMA80} and can be found for more general $A$-modules in ~\cite{AGM85}, where the properties listed above are also proven. 

In ~\cite[Section 3.3]{Gre12}, Gregersen defines the \emph{motivic Singer construction} $R_+(-)$ and proves that it associates an $Ext_A$-equivalent module $R_+(M)$ to any module $M$ over the motivic Steenrod algebra. In order to prove a motivic version of Lin's Theorem, Gregersen constructs a motivic analog of $RP^\infty_{-\infty}$ and shows that its continuous motivic cohomology is isomorphic to the motivic Singer construction of the motivic cohomology of a desuspension of the motivic sphere spectrum. We now recall this construction.

Motivic cohomology with mod $2$ coefficients is represented by the motivic Eilenberg-Mac Lane spectrum for $\f_2$. We denote this spectrum by $H$, and we will denote the motivic cohomology of a point $H^{**}(Spec(k))$ by $\m_2$. When $k=\c$, we have
$$\m_2 \cong \f_2[\tau]$$
where $|\tau| = (0,1)$. 

Let $\g_m$ denote the multiplicative group scheme, $\a^n$ the affine space of rank $n$, and $\p^n$ the projective space of rank $n$. Then there is an equivalence $\p^n \simeq (\a^{n+1} \setminus 0) / \g_m$. Let $\mu_p$ denote the group scheme of $p$-th roots of unity. The closed inclusion $\mu_p \hookrightarrow \g_m$ defines an action of $\mu_p$ on $\a^n \setminus 0$. The motivic lens space is then defined as
$$L^n := (\a^n \setminus 0)/\mu_p.$$

The inclusion $\a^n \setminus 0 \hookrightarrow \a^{n+1} \setminus 0$ sending $(x_1,\ldots,x_n) \mapsto (x_1,\ldots,x_n,0)$ induces a map $L^n \to L^{n+1}$. Taking the colimit over these maps defines a motivic space called $L^\infty$, also known as the geometric classifying space $B(\mu_p)_{gm}$ of the $p$-th roots of unity. 

The motivic spaces $L^n$ are represented by smooth schemes ~\cite[Lemma 4.1.2]{Gre12}, so in particular any algebraic bundle $E \to L^n$ is an $\a^1$-homotopy equivalence by ~\cite[Proposition 4.2.3]{MV99}. The tautological line bundle $\gamma^1_n$ over $\p^n$ can be viewed as the closed subset of $\a^{n+1} \times \p^n$ satisfying 
$$x_i y_j = x_j y_i,$$
where the $x_i$'s are coordinates for $\a^{n+1}$ and the $y_i$'s are coordinates for $\p^n$. 
The inclusions
$$\iota : \p^n \hookrightarrow \p^{n+1}$$
satisfy $\iota^* \gamma^1_{n+1} = \gamma^1_n$. By ~\cite[Lemma 6.3]{Voe03}, there is an identification of $L^n$
$$L^n \cong E((\gamma^1_{n-1})^{\otimes p} \setminus 0 \downarrow \p^{n-1})$$
with the total space of the complement of the zero section of the $p$-fold tensor product of the tautological line bundle over $\p^{n-1}$. The projection thus induces a map
$$f_n : L^n \to \p^{n-1}.$$
The tautological line bundle $\gamma^1_n$ over $L^n$ is the same bundle as the pullback $f^*_n \gamma^1_{n-1}$ of the tautological line bundle over $\p^{n-1}$. 

The motivic cohomology of these spaces was computed by Voevodsky in ~\cite{Voe03}. Below we specialize the computation to the case $k=\c$:

\begin{thm} ~\cite[Theorems 4.1 and 6.10]{Voe03} For any prime $p$,
$$H^{**}{\p^n} \cong \m_p[v]/(v^{n+1})$$
with $|v| = (2,1)$ and for $p=2$,
$$H^{**}(L^n) \cong \m_2[u,v]/(u^2 + \tau v, v^n)$$
$$H^{**}(L^\infty) \cong \m_2[u,v]/(u^2 + \tau v)$$
where $|u| = (1,1)$ and $|v| = (2,1)$.  The action of the motivic Steenrod algebra on $H^{**}(L^\infty)$ is given by
$$Sq^{2i}(v^k) = {2k\choose2i} v^{k+i}$$
$$Sq^{2i+1}(v^k) = 0$$
$$Sq^{2i}(uv^k) = {2k\choose2i} uv^{k+i}$$
$$Sq^{2i+1}(uv^k) = {2k\choose2i}v^{k+i+1}.$$
\end{thm}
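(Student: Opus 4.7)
The plan is to compute the three cohomology rings in sequence using standard cofiber and Gysin sequence methods, then derive the Steenrod action from a few base cases via the Cartan formula.

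For $H^{**}(\p^n)$, I would induct on $n$. The cofiber sequence $\p^{n-1} \hookrightarrow \p^n \to \p^n/\p^{n-1}$, combined with motivic purity applied to the open complement $\a^n \hookrightarrow \p^n$, identifies the quotient with $S^{2n,n}$. Since both $H^{**}(\p^{n-1})$ (by induction) and $H^{**}(S^{2n,n})$ are concentrated in bidegrees of the form $(2k,k)$, the long exact sequence splits into short exact sequences, and the first Chern class $v = c_1(\co(1))$ serves as the polynomial generator subject to the dimension relation $v^{n+1} = 0$.

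For $H^{**}(L^n)$, I would use the identification $L^n \cong E((\gamma^1_{n-1})^{\otimes 2} \setminus 0 \downarrow \p^{n-1})$ recalled above and the motivic Gysin sequence for the complement of the zero section of a line bundle,
$$\cdots \to H^{p-2,q-1}(\p^{n-1}) \xrightarrow{\cdot e} H^{p,q}(\p^{n-1}) \xrightarrow{f_n^*} H^{p,q}(L^n) \to H^{p-1,q-1}(\p^{n-1}) \xrightarrow{\cdot e} \cdots,$$
where $e = c_1((\gamma^1_{n-1})^{\otimes 2}) = 2v = 0$ in mod $2$ motivic cohomology (the formal group law for $H\z/2$ is additive). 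The boundary multiplication vanishes, yielding short exact sequences that split as $\m_2$-modules because each kernel is a free $\m_2$-module. This gives $H^{**}(L^n) \cong H^{**}(\p^{n-1}) \oplus \Sigma^{1,1} H^{**}(\p^{n-1})$, where $u \in H^{1,1}(L^n)$ is chosen to map to $1 \in H^{0,0}(\p^{n-1})$ under the Gysin boundary and $v$ is the pullback $f_n^* v$. The relation $v^n = 0$ is inherited from $\p^{n-1}$. The multiplicative relation $u^2 = \tau v$, however, is not forced by the Gysin sequence and requires external input; I would establish it using the comparison $f_n \colon L^n \to \p^{n-1}$ together with the Kummer sequence $1 \to \mu_2 \to \g_m \xrightarrow{(-)^2} \g_m \to 1$, or via complex realization, under which $(\tau, v, u)$ map to $(1, x^2, x)$ in $H^*(RP^\infty;\f_2)$, together with the observation that $u^2$ must land in the one-dimensional subspace $H^{2,2}(L^n) = \f_2\{\tau v\}$ and cannot vanish. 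Passing to the colimit over $n$ then yields $H^{**}(L^\infty) \cong \m_2[u,v]/(u^2+\tau v)$ with no $\lim^1$ obstruction, since the transition maps are surjective.

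For the Steenrod action, the Cartan formula reduces everything to the base cases $Sq^1 u = v$ (the unique nonzero option in bidegree $(2,1)$, reflecting the nontrivial Bockstein), $Sq^2 v = v^2$ (the motivic total Chern class / Wu formula, verifiable by restriction to $\p^1$), $Sq^{2i} v = 0$ for $i \geq 2$ (instability), $Sq^1 v = 0$ (since $v$ lifts to an integral class), and $Sq^i \tau = 0$ for $i \geq 1$. Iterating Cartan on $v^k$ and $uv^k$, together with the mod-$2$ identity $\binom{k}{i} \equiv \binom{2k}{2i}$, then yields the stated closed-form formulas.

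The main obstacle is establishing $u^2 = \tau v$: the Gysin sequence only determines the additive structure, and the appearance of $\tau$ in the multiplicative relation is the essentially motivic phenomenon that distinguishes $L^n$ from the product $\p^{n-1} \times \g_m$. Once $u^2 = \tau v$ is in hand, the remaining ring and Steenrod computations are essentially formal.
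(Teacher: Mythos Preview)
The paper does not supply its own proof of this theorem: it is stated as a citation of Voevodsky's results \cite[Theorems~4.1 and~6.10]{Voe03} and is used as input for the rest of the paper without further argument. There is therefore no proof in the paper to compare your proposal against.

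That said, your sketch is a faithful reconstruction of the standard argument, which is essentially Voevodsky's. The computation of $H^{**}(\p^n)$ by induction along the cofiber sequence with quotient $S^{2n,n}$, the identification of $L^n$ as the complement of the zero section of $(\gamma^1_{n-1})^{\otimes 2}$ and the resulting Gysin sequence with vanishing Euler class mod~$2$, and the derivation of the Steenrod action from the base cases $Sq^1 u = v$, $Sq^2 v = v^2$ via the Cartan formula and Lucas' theorem are all correct and are the expected steps. Your identification of the relation $u^2 = \tau v$ as the one genuinely nontrivial point is also right: over a general base field Voevodsky's relation is $u^2 = \tau v + \rho u$ with $\rho = [-1] \in H^{1,1}(k)$, and your dimension-count argument that $H^{2,2}(L^n) = \f_2\{\tau v\}$ works precisely because $\rho = 0$ over~$\c$. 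The complex-realization check that $u^2 \neq 0$ is then sufficient to pin down the relation. The only place where your outline is slightly thin is in justifying $Sq^1 u = v$ rather than $0$; realization again suffices, or one can appeal directly to the Bockstein and the Kummer sequence as you suggest.
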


The pullback bundle $f^*_n \gamma^1_{n-1}$ over $L^n$ can be identified with $(\a^n \setminus 0) \times_{\mu_p} \a^1$ over $L^n$ by identifying $(\lambda x_1, \ldots, \lambda x_n, y) \sim (x_1,\ldots,x_n,\lambda y)$ where $\lambda \in \mu_p$. The inclusion $\a^1 \hookrightarrow \a^n$ defines an embedding of $\gamma^1_{n-1}$ into the trivial bundle $\epsilon^n$ over $L^n$ by sending
$$(x_1,\ldots,x_n,y) \mapsto (x_1,\ldots,x_n,x_1y,\ldots,x_ny) \in L^n \times \a^n.$$ 

In analogy with the classical construction of $RP^\infty_{-\infty}$, one now needs to define the Thom space of the ``motivic orthogonal complement". For $\eta \hookrightarrow \xi$ an inclusion of vector bundles, Gregersen defines
$$Th(\xi,\eta) := \dfrac{E(\xi)}{E(\xi) \setminus E(\eta)}.$$
By ~\cite[Lemma 4.1.22]{Gre12}, if $\eta \oplus \zeta \cong \xi$ is an isomorphism of vector bundles over a smooth scheme, then $Th(\zeta) \to Th(\xi,\eta)$ is an $\a^1$-weak equivalence. With this notion of orthogonal complement, one can define motivic stunted projective spectra:

\begin{defin} ~\cite[Definition 4.1.23]{Gre12} For $n \geq 0$ and $k \geq 0$, let $\underline{L}^{n-k}_{-k}$ be the motivic spectrum 
$$\underline{L}^{n-k}_{-k} = \Sigma^{-2kn,-kn} Th(k\epsilon^n,k\gamma^1_{n-1}).$$
\end{defin}

We will frequently refer to (stable) cells of $\underline{L}^{n-k}_{-k}$. The following lemma justifies this terminology.

\begin{lem} ~\cite[Lemma 4.2.19]{Gre12} The spectra $\underline{L}^{n-k}_{-k}$ are stably cellular. 
\end{lem}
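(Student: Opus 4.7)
The plan is to show cellularity in two stages: first for the motivic lens spaces $L^n$, then for the Thom spectra $\underline{L}^{n-k}_{-k}$ built from them.

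For the first stage, I would induct on $n$, with base case $L^0 = \operatorname{Spec}(\c)_+ \simeq S^{0,0}$. For the inductive step, consider the closed embedding $L^{n-1} \hookrightarrow L^n$ coming from $(x_1,\ldots,x_{n-1}) \mapsto (x_1,\ldots,x_{n-1},0)$. The open complement $L^n \setminus L^{n-1} = (\a^{n-1} \times \g_m)/\mu_2$ is $\a^1$-equivalent to $\g_m \simeq S^{1,1}$ (the $\a^{n-1}$-factor contributes nothing homotopically, and $\g_m/\mu_2 \cong \g_m$ via the squaring map). The motivic homotopy purity theorem of Morel--Voevodsky then gives a cofiber sequence
$$(\g_m)_+ \to L^n_+ \to Th(N_{L^{n-1}/L^n}),$$
with the normal line bundle $N_{L^{n-1}/L^n}$ naturally identified as the tautological line bundle on $L^{n-1}$ via the $\mu_2$-action on the $x_n$-coordinate. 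Since $(\g_m)_+ \simeq S^{0,0} \vee S^{1,1}$ is cellular and, by the inductive hypothesis together with the standard fact that Thom spaces of vector bundles over stably cellular motivic spaces are stably cellular, $Th(N_{L^{n-1}/L^n})$ is cellular, we conclude that $L^n$ is cellular as the middle term of a cofiber sequence with cellular ends.

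For the second stage, I would invoke Gregersen's Lemma 4.1.22 to replace the relative Thom space $Th(k\epsilon^n, k\gamma^1_{n-1})$ with an ordinary Thom space. The line bundle $\gamma^1_{n-1}$, after a sufficient twist, is generated by global sections on the smooth quasi-projective variety $L^n$, so for some large integer $N$ the sub-bundle $k\gamma^1_{n-1} \subset N\epsilon^n$ admits an algebraic complement $\zeta$. Lemma 4.1.22 then gives $Th(N\epsilon^n, k\gamma^1_{n-1}) \simeq Th(\zeta)$, and desuspending by the appropriate motivic shift identifies $\underline{L}^{n-k}_{-k}$ with a suspension of $Th(\zeta)$. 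Cellularity of $Th(\zeta)$ follows from the first stage by the standard cell-induction argument for Thom spaces over cellular bases: restricting $\zeta$ to each piece of the cellular filtration $A \to B \to C$ of $L^n$ produces a cofiber sequence $Th(\zeta|_A) \to Th(\zeta|_B) \to Th(\zeta|_C)$, and since $C$ is a wedge of motivic spheres on which $\zeta$ is stably trivial, the right-hand term is itself a wedge of motivic spheres.

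The principal technical obstacle is the identification of the normal bundle of $L^{n-1}$ in $L^n$ via motivic purity and the verification that vector bundles restrict stably trivially to motivic spheres; once these geometric inputs are in place, the rest of the argument is a formal cell-induction in the stable motivic homotopy category.
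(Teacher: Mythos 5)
Your Stage 1 argument is sound in outline: the closed immersions $L^{n-1} \hookrightarrow L^n$, the identification of the open complement as a vector bundle over $\g_m/\mu_2 \cong \g_m$, the purity Gysin triangle, and the identification of the normal bundle with the tautological line bundle are all correct. One small correction: with the paper's definition $L^n = (\a^n\setminus 0)/\mu_p$, we have $L^0 = \emptyset$, not $\operatorname{Spec}(\c)_+$; the induction should start at $L^1 = \g_m/\mu_2 \cong \g_m$.

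The genuine gap is in Stage 2, in the sentence asserting that $k\gamma^1_{n-1} \subset N\epsilon^n$ ``admits an algebraic complement $\zeta$.'' Being generated by global sections gives a surjection from a trivial bundle onto $\gamma^1_{n-1}$ (and, since $(\gamma^1_{n-1})^{\otimes 2}\cong\epsilon^1$, dually an embedding into one), but this says nothing about the sequence splitting. The scheme $L^n$ is quasi-affine but not affine for $n\geq 2$ (it inherits the nonvanishing higher coherent cohomology of $\a^n\setminus 0$), so the $\Ext^1$-group obstructing a splitting of $0\to k\gamma^1_{n-1}\to N\epsilon^n\to Q\to 0$ is not forced to vanish. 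Indeed, the very reason Gregersen sets up $\underline{L}^{n-k}_{-k}$ via the \emph{relative} Thom space $Th(k\epsilon^n,k\gamma^1_{n-1})$ — and states Lemma 4.1.22 conditionally, ``if $\eta\oplus\zeta\cong\xi$'' — is that a complement is not generally available. The fix avoids complements entirely: use the defining cofiber sequence $(E(\xi)\setminus E(\eta))_+ \to E(\xi)_+ \to Th(\xi,\eta)$ with $\xi=k\epsilon^n$, $\eta=k\gamma^1_{n-1}$. The middle term is $\a^1$-equivalent to $L^n_+$, cellular by Stage 1. For the left term, the projection $E(\xi)\setminus E(\eta)\to E(\xi/\eta)\setminus 0$ is a torsor under $\eta$, hence an affine bundle, hence an $\a^1$-equivalence by ~\cite[Proposition 4.2.3]{MV99}; and $(E(\xi/\eta)\setminus 0)_+$ sits in a cofiber sequence with $L^n_+$ and $Th(\xi/\eta)$, where $Th(\xi/\eta)$ is cellular by your cell-induction over the filtration $L^1\subset\cdots\subset L^n$. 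Two-out-of-three then gives cellularity of $Th(\xi,\eta)$. (Alternatively, Jouanolou's device replaces $L^n$ by an $\a^1$-equivalent smooth affine scheme, over which complements do exist; but as written your argument does not invoke this.) One more small point: in your cell-induction for Thom spaces, the correct cofiber sequence for a closed immersion $Z\subset X$ with open complement $U$ is $Th(\zeta|_U)\to Th(\zeta|_X)\to Th(\zeta|_Z\oplus N_{Z/X})$ — the normal bundle appears on the closed stratum, and it is the open stratum $U\simeq\g_m$ over which the bundle trivializes, not the cofiber $C$.
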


The following lemmata are proven over more general base schemes, but we specialize to the case $k=\c$. 

\begin{lem}~\cite[Lemma 4.1.24]{Gre12} The motivic cohomology of  $\underline{L}^{n-k}_{-k}$ is given as a module over $\m_2$ by
$$H^{**}(\underline{L}^{n-k}_{-k}) = \Sigma^{-2k,-k} \m_2[u,v]/(u^2 + \tau v, v^n)$$
with $|u| = (1,1)$ and $|v| = (2,1)$. 
\end{lem}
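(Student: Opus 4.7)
The plan is to unravel the definition $\underline{L}^{n-k}_{-k} = \Sigma^{-2kn,-kn} Th(k\epsilon^n, k\gamma^1_{n-1})$ and compute the cohomology of the relative Thom space via a motivic Thom isomorphism on $L^n$. The final shift of $(-2k,-k)$ will arise by combining the global desuspension $(-2kn,-kn)$ with a Thom isomorphism shift $(2k(n-1), k(n-1))$, corresponding to a rank $k(n-1)$ complement bundle.

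The first step is to exhibit a rank $k(n-1)$ vector bundle $\zeta$ over $L^n$ with $k\gamma^1_{n-1} \oplus \zeta \cong k\epsilon^n$. Such a $\zeta$ can be produced $\mu_p$-equivariantly on $\a^n \setminus 0$ using the standard algebraic bilinear pairing $\sum x_i y_i$, which splits the inclusion $\gamma^1_{n-1} \hookrightarrow \epsilon^n$ after descent to $L^n$, and then adjoining trivial summands to pass to $k$-fold sums. Lemma 4.1.22 of ~\cite{Gre12} then provides an $\a^1$-weak equivalence
$$Th(\zeta) \xrightarrow{\simeq} Th(k\epsilon^n, k\gamma^1_{n-1}).$$
The second step applies the motivic Thom isomorphism for $H$-cohomology: algebraic vector bundles over $\Spec(\c)$ are canonically $H$-oriented, so
$$H^{**}(Th(\zeta)) \cong \Sigma^{2k(n-1), k(n-1)} H^{**}(L^n).$$
Substituting Voevodsky's computation $H^{**}(L^n) \cong \m_2[u,v]/(u^2 + \tau v, v^n)$ and combining the two shifts yields
$$H^{**}(\underline{L}^{n-k}_{-k}) \cong \Sigma^{-2k,-k}\m_2[u,v]/(u^2+\tau v, v^n),$$
as claimed.

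The main obstacle is the construction of the algebraic complement $\zeta$, since the familiar Hermitian orthogonal complement is not an algebraic operation and the pulled-back Euler sequence on $L^n$ need not split on the nose. If a direct $\mu_p$-equivariant splitting proves elusive, the fallback is to work stably using Lemma 4.2.19 of ~\cite{Gre12} (cellularity of $\underline{L}^{n-k}_{-k}$), so that the Thom isomorphism applies to the stable normal bundle; the $\m_2[u,v]/(u^2+\tau v, v^n)$-module structure is then pinned down by naturality of the Thom class along the bundle projection $f_n : L^n \to \p^{n-1}$ together with the motivic Steenrod algebra action recorded in Voevodsky's theorem.
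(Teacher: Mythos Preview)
The paper does not supply its own proof of this lemma; it is quoted verbatim from Gregersen's thesis, so there is nothing in the paper to compare your argument against. That said, your outline is essentially the right one and the degree arithmetic is correct, but the step you flag as the main obstacle is a genuine gap as written.

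Your proposed splitting of $\gamma^1_{n-1}\hookrightarrow \epsilon^n$ via the bilinear form $\sum x_iy_i$ does not work over $\c$: the induced projection onto the tautological line requires dividing by $\sum x_i^2$, which vanishes on the quadric hypersurface, so you do not get a global algebraic retraction on $L^n$ (and the Euler sequence on $\p^{n-1}$ famously does not split). Your fallback through cellularity is not really a fix either; cellularity tells you the cohomology is determined by a cell structure, but it does not by itself produce the Thom class or the module identification you want.

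The clean repair is to bypass the splitting entirely and invoke motivic homotopy purity from \cite{MV99}: for the closed immersion $E(k\gamma^1_{n-1})\hookrightarrow E(k\epsilon^n)$ of smooth schemes one has
\[
Th(k\epsilon^n,k\gamma^1_{n-1}) \;=\; \frac{E(k\epsilon^n)}{E(k\epsilon^n)\setminus E(k\gamma^1_{n-1})} \;\simeq\; Th\bigl(N_{E(k\gamma^1_{n-1})/E(k\epsilon^n)}\bigr),
\]
and the normal bundle is the pullback of the quotient bundle $k\epsilon^n/k\gamma^1_{n-1}$ of rank $k(n-1)$. Since the bundle projection $E(k\gamma^1_{n-1})\to L^n$ is an $\a^1$-equivalence, this Thom space is weakly equivalent to the Thom space of a rank $k(n-1)$ bundle on $L^n$, and now your Thom isomorphism and shift bookkeeping go through exactly as you wrote. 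In short: replace ``direct-sum complement'' by ``quotient bundle via purity'' and your proof is complete.
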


In order to define the analog of $RP^\infty_{-\infty}$, we need to take a homotopy limit of these motivic lens spaces. Therefore, one must know that $n$ and $k$ can be varied compatibly. 

\begin{lem}~\cite[Lemma 4.1.30]{Gre12} The diagram 
\[
\begin{tikzcd}
\underline{L}^{n-k}_{-k} \arrow{r} \arrow{d} & \underline{L}^{n-k+1}_{-k}\arrow{d} \\
\underline{L}^{n-k+1}_{-k+1} \arrow{r} & \underline{L}^{n-k+2}_{-k+1}
\end{tikzcd}
\]
commutes. The induced map in motivic cohomology sends elements $v^j \mapsto v^j$ and $uv^j \to uv^j$ if defined and are zero otherwise. 
\end{lem}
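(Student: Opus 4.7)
The plan is to realize each arrow in the square as a map of motivic Thom spectra induced by a natural map of pairs of vector bundles, and to verify that both composites come from the same underlying bundle data.

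First I would identify the four arrows. The horizontal arrow $\underline{L}^{n-k}_{-k}\to\underline{L}^{n-k+1}_{-k}$ is induced by the closed inclusion $\iota\colon L^{n}\hookrightarrow L^{n+1}$: since $\iota^{*}\gamma^{1}_{n}\cong\gamma^{1}_{n-1}$ (as recalled in the excerpt) and there is a compatible inclusion $\epsilon^{n}\hookrightarrow\epsilon^{n+1}$, one obtains a map of pairs $(k\epsilon^{n},k\gamma^{1}_{n-1})\to(k\epsilon^{n+1},k\gamma^{1}_{n})$, and hence a map of Thom quotients, which after applying the prescribed $\Sigma^{-2kn,-kn}$ suspension becomes the top horizontal arrow. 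The vertical arrow $\underline{L}^{n-k}_{-k}\to\underline{L}^{n-k+1}_{-k+1}$ is obtained by peeling off one summand from each of $k\epsilon^{n}$ and $k\gamma^{1}_{n-1}$: writing $k\epsilon^{n}\cong(k-1)\epsilon^{n}\oplus\epsilon^{n}$ and $k\gamma^{1}_{n-1}\cong(k-1)\gamma^{1}_{n-1}\oplus\gamma^{1}_{n-1}$, Gregersen's Lemma 4.1.22 cited in the excerpt identifies the resulting Thom quotient with $Th((k-1)\epsilon^{n},(k-1)\gamma^{1}_{n-1})$ up to an $\a^{1}$-weak equivalence absorbed by the suspension.

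Second, after these identifications the commutativity of the square follows from the naturality of the Thom quotient construction $Th(\xi,\eta)$. Both composites are induced by the same map of pairs $(k\epsilon^{n},k\gamma^{1}_{n-1})\to((k-1)\epsilon^{n+1},(k-1)\gamma^{1}_{n})$ obtained by combining the two operations in either order. Since enlarging $n$ acts by pulling back along $\iota$, while peeling off a summand acts on the Whitney sum decomposition of the bundles, the two operations act on disjoint structure and strictly commute at the level of pairs of bundles. Applying $Th(-,-)$ then yields a commutative square of Thom spectra, and the suspension prefactors $\Sigma^{-2kn,-kn}$ change in matching ways along both composites.

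Third, for the cohomology statement I would apply the Thom isomorphism together with Voevodsky's computation recalled in Theorem 2.1. The pullback $\iota^{*}\colon H^{**}(L^{n+1})\to H^{**}(L^{n})$ sends $v^{j}\mapsto v^{j}$ and $uv^{j}\mapsto uv^{j}$ when these monomials are defined in both spaces and sends the top generators that are killed to zero. By naturality of the Thom isomorphism, the horizontal arrow of the square acts on the free $\m_{2}$-module generators of $H^{**}(\underline{L}^{n-k+1}_{-k})$ described in Lemma 4.1.24 by the same formula. The vertical arrow is a collapse of bottom cells arising from splitting off one $\gamma$-summand, and it sends each surviving monomial to its namesake and the killed bottom generators to zero. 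Composing in either direction gives $v^{j}\mapsto v^{j}$ and $uv^{j}\mapsto uv^{j}$ when defined in the target and zero otherwise, as claimed.

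The main obstacle is the bookkeeping of suspensions: the prefactors $\Sigma^{-2kn,-kn}$ differ at each corner of the square by amounts depending linearly on $k$ and $n$, so one must carefully verify that the $\a^{1}$-weak equivalences supplied by Gregersen's Lemma 4.1.22 are compatible with the pullback along $\iota$. An off-by-one suspension error would spoil commutativity, but the shifts are linear and line up cleanly once tracked carefully.
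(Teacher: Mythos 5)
The paper does not supply a proof here: this lemma is quoted directly from Gregersen's thesis (\cite[Lemma 4.1.30]{Gre12}) as background, so there is no in-paper argument to compare against. Your plan --- realize each arrow of the square as a map of motivic Thom quotients, prove commutativity by functoriality of $Th(-,-)$, and read off the cohomology via the Thom isomorphism together with Voevodsky's computation of $H^{**}(L^n)$ --- is the natural one and is very likely the shape of Gregersen's argument.

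That said, the construction of the vertical arrow is not actually pinned down in your write-up, and that is where the real content lies. ``Peeling off one summand from each of $k\epsilon^{n}$ and $k\gamma^{1}_{n-1}$'' does not by itself define a map, and \cite[Lemma 4.1.22]{Gre12} only produces an $\a^1$-equivalence $Th(\zeta)\simeq Th(\xi,\eta)$ when a complement $\zeta$ of $\eta$ in $\xi$ exists; it is not a collapse map. What you want is the quotient map $Th(k\epsilon^{n},\,k\gamma^{1}_{n-1})\to Th(k\epsilon^{n},\,(k-1)\gamma^{1}_{n-1})$ induced by the subbundle inclusion $(k-1)\gamma^{1}_{n-1}\hookrightarrow k\gamma^{1}_{n-1}$ inside $k\epsilon^{n}$, followed by the identification $Th\bigl((k-1)\epsilon^{n}\oplus\epsilon^{n},\,(k-1)\gamma^{1}_{n-1}\bigr)\simeq\Sigma^{2n,n}Th\bigl((k-1)\epsilon^{n},\,(k-1)\gamma^{1}_{n-1}\bigr)$ supplied by Lemma 4.1.22 applied to the trivial summand; this accounts precisely for the $\Sigma^{2n,n}$ discrepancy in the defining prefactors. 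Only with both arrows made explicit in this way can one check that the two composites around the square are literally the same map of Thom quotients. Appealing to ``naturality'' is not sufficient by itself, because the horizontal arrows change the base $L^{n}\to L^{n+1}$ while the vertical arrows change the pair of bundles over a fixed base; the compatibility of $\iota^{*}$ with the subbundle inclusion and with the trivial-summand suspension equivalence is exactly the point to be verified, not assumed. You rightly identify the suspension bookkeeping as the crux, but a complete proof must carry it out rather than record it as a remaining obstacle.
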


By letting $n$ vary, we obtain a spectrum
$$\underline{L}^\infty_{-k} = \underset{{\underset{n}{\longrightarrow}}}{\lim} \underline{L}^{n-k}_{-k}$$
and we note that as in the classical case, $\underline{L}^\infty_0 = \Sigma^\infty L^\infty_+$. Now letting $k$ vary, we have
$$\underline{L}^\infty_{-\infty} =  \underset{{\underset{k}{\longleftarrow}}}{\lim} \underline{L}^\infty_{-k}.$$

\begin{prop} ~\cite[Proposition 4.1.32]{Gre12} As modules over $\m_2$ we have
$$H^{**}(\underline{L}^\infty_{-k}) \cong \Sigma^{-2k,-k} \m_2[u,v]/(u^2 + \tau v)$$
and
$$H^{**}_c(\underline{L}^\infty_{-\infty}) \cong \m_2[u,v,v^{-1}]/(u^2 + \tau v).$$
\end{prop}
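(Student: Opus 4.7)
My plan is to reduce each of the two computations to a sequential limit or colimit over the finite-stage spectra $\underline{L}^{n-k}_{-k}$, appealing to Lemma 4.1.24 for their motivic cohomology and to Lemma 4.1.30 to identify the transition maps. The only real work is then organizing these systems and verifying that no $\lim^{1}$ contamination or completion-type phenomenon interferes.

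For the first identification, since $\underline{L}^\infty_{-k} = \colim_n \underline{L}^{n-k}_{-k}$ is a sequential homotopy colimit, I would apply the Milnor short exact sequence
$$0 \to \lim_n{}^{\!1} H^{*-1,*}(\underline{L}^{n-k}_{-k}) \to H^{**}(\underline{L}^\infty_{-k}) \to \lim_n H^{**}(\underline{L}^{n-k}_{-k}) \to 0.$$
By the horizontal half of Lemma 4.1.30, each transition map $H^{**}(\underline{L}^{n-k+1}_{-k}) \to H^{**}(\underline{L}^{n-k}_{-k})$ is the natural quotient
$$\Sigma^{-2k,-k}\m_2[u,v]/(u^2+\tau v, v^{n+1}) \twoheadrightarrow \Sigma^{-2k,-k}\m_2[u,v]/(u^2+\tau v, v^n),$$
which is surjective, so Mittag--Leffler forces $\lim^{1}=0$. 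Moreover, because $v$ carries positive bidegree $(2,1)$, in each fixed bidegree the tower of $\f_2$-vector spaces is eventually constant, so the inverse limit is not a $v$-adic completion but literally $\Sigma^{-2k,-k}\m_2[u,v]/(u^2+\tau v)$ as an $\m_2$-module.

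For the second identification, the continuous motivic cohomology of the homotopy inverse limit is, by definition, the colimit
$$H^{**}_c(\underline{L}^\infty_{-\infty}) = \colim_k H^{**}(\underline{L}^\infty_{-k}).$$
The transition map $H^{**}(\underline{L}^\infty_{-(k-1)}) \to H^{**}(\underline{L}^\infty_{-k})$ coming from the vertical half of Lemma 4.1.30 must preserve bidegrees; matching a class $v^j$ of bidegree $(2j-2k+2,\,j-k+1)$ in the source to the class of the same bidegree in the target forces its image to be $v^{j+1}$. In other words, each transition map is multiplication by $v$ on the underlying ring $\m_2[u,v]/(u^2+\tau v)$, so the colimit inverts $v$ and produces $\m_2[u,v,v^{-1}]/(u^2+\tau v)$, as claimed.

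The step I expect to be the main obstacle is the suspension-shift bookkeeping: verifying in the first part that the transition maps are genuinely surjective quotients (so that $\lim^{1}$ vanishes and no completion appears) and in the second part that the direct system really is iterated multiplication by $v$. Once this is in order, the relation $u^2+\tau v$ is invariant under every connecting map in sight and passes to both limits without modification.
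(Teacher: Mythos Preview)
The paper does not give its own proof of this proposition; it is simply quoted from Gregersen's thesis \cite[Proposition 4.1.32]{Gre12}, so there is nothing to compare against here. Your argument is correct and is the expected one: Milnor's $\lim^1$ sequence for the colimit defining $\underline{L}^\infty_{-k}$, together with the eventual stabilization in each bidegree, handles the first isomorphism; and the definition of continuous cohomology as a colimit handles the second.

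One small remark on your bookkeeping in the second part: the way Lemma 4.1.30 is phrased (``sends $v^j \mapsto v^j$ if defined'') already uses an absolute naming of cohomology classes across all the $\underline{L}^{n-k}_{-k}$, so in that convention the vertical transition maps are literally inclusions of $\m_2$-submodules. Your renormalized description as multiplication by $v$ after absorbing the suspension shift is equivalent, and in either picture the colimit is the localization $\m_2[u,v,v^{-1}]/(u^2+\tau v)$. This is exactly the step you flagged as needing care, and you have it right.
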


One can extend the isomorphism of $\m_2$-modules in the previous proposition to an isomorphism of $A$-modules by continuing the periodic action of the motivic Steenrod operations on $L^\infty$ to the negative cells. For reference, we include a picture of the motivic cohomology of $\underline{L}^\infty_{-\infty}$ in a range below.

\begin{fig}\label{lcd}
The following diagram depicts the continuous motivic cohomology $H_c^{i,j}(\Sigma^{1,0}\underline{L}^{\infty}_{-\infty})$ for $-11 \leq i \leq 12$ and $-5 \leq j \leq 6$. The horizontal axis is topological degree and the vertical axis is motivic weight. A bullet represents $\m_2$, i.e. it represents an infinite tower of $\f_2$'s connected by $\tau$-multiplication extending downwards. The action of $Sq^1$ is depicted by black horizontal lines between bullets, the action of $Sq^2$ is depicted by blue curves with horizontal length $2$, and the action of $Sq^4$ is depicted by red curves with horizontal length $4$. 

\hskip.4in \includegraphics[scale=.5]{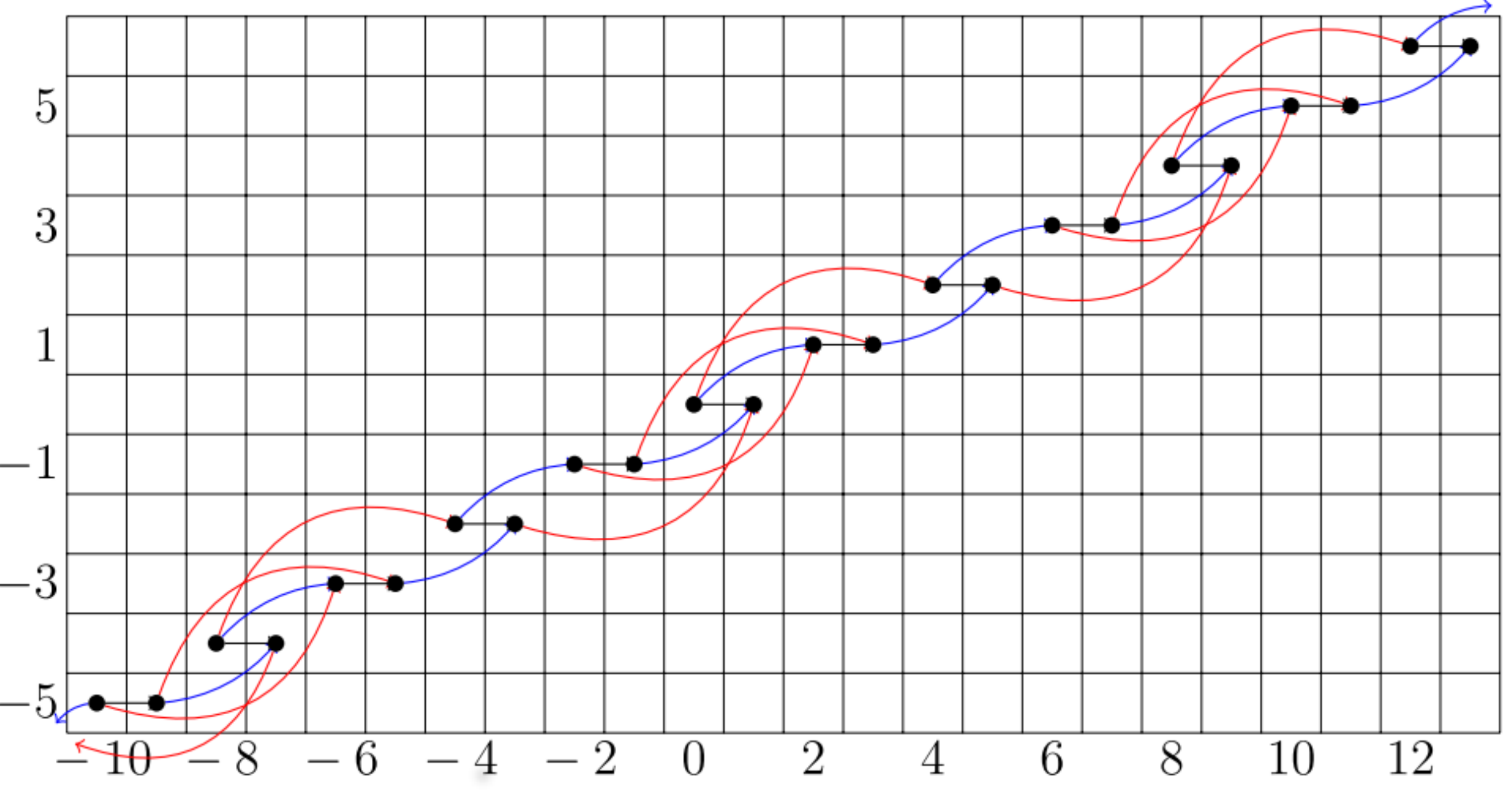}

\end{fig}

The following result implies a motivic analog of Lin's Theorem.

\begin{prop} ~\cite[Proposition 4.1.37]{Gre12} There is an $A$-module isomorphism
$$H^{**}_c(\underline{L}^\infty_{-\infty}) \cong \Sigma^{1,0} \m_2[u,v,v^{-1}]/(u^2 + \tau v) \cong R_+(\m_2),$$
where $R_+(\m_2)$ is the motivic Singer construction of $\m_2$.
\end{prop}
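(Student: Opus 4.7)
The plan is to upgrade the $\m_2$-module isomorphism of the previous proposition to an $A$-module isomorphism, and then recognize the resulting module as the motivic Singer construction $R_+(\m_2)$.

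For the first step, the transition maps $\underline{L}^\infty_{-k-1} \to \underline{L}^\infty_{-k}$ in the tower are maps of motivic spectra, so they induce $A$-module homomorphisms on motivic cohomology. Consequently $H^{**}_c(\underline{L}^\infty_{-\infty}) = \colim_k H^{**}(\underline{L}^\infty_{-k})$ carries a canonical $A$-module structure, which for $k \geq 0$ restricts to Voevodsky's Steenrod action on $H^{**}(L^\infty)$ recorded in the earlier theorem.

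For the second step, I would pin down the $A$-action on the negative cells. One route uses Gregersen's presentation of $\underline{L}^{n-k}_{-k}$ as a desuspension of the Thom space $Th(k\epsilon^n, k\gamma^1_{n-1})$: applying the Thom isomorphism allows one to identify the generators $v^{-k}$ and $u v^{-k}$ in terms of Thom classes and to propagate Voevodsky's formulas into negative weights. Alternatively, one can apply the Cartan formula directly to $v \cdot v^{-1} = 1$ (using $Sq^{2i}(1)=0$ for $i>0$) to solve inductively for $Sq^{2i}(v^{-k})$, and then use the $u$-factor together with the relation $u^2 + \tau v = 0$ to determine the odd squares and the $Sq^j$-action on $u v^{-k}$. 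Either approach yields the extended formulas $Sq^{2i}(v^k) = \binom{2k}{2i}v^{k+i}$ and $Sq^{2i+1}(uv^k) = \binom{2k}{2i}v^{k+i+1}$ valid for all $k \in \z$, with the binomial coefficients interpreted via their $2$-adic extension.

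The final step is the comparison with $R_+(\m_2)$. By construction in \cite[Section 3.3]{Gre12}, the motivic Singer construction $R_+(\m_2)$ has underlying bigraded $\m_2$-module $\Sigma^{1,0} \m_2[u,v,v^{-1}]/(u^2+\tau v)$, with $A$-action given by precisely these extended binomial-coefficient formulas; the $\Sigma^{1,0}$ shift reflects the dualizing desuspension built into the Singer construction. The isomorphism is then obtained by matching canonical generators. The main obstacle is the bookkeeping of step two: one must verify that the Cartan-formula (or Thom-isomorphism) argument really pins down \emph{all} relevant Steenrod operations uniformly in $i$ and in negative $k$, and in particular that the relation $u^2 + \tau v = 0$ does not introduce $\tau$-torsion or additional indeterminacy that would obstruct the identification with $R_+(\m_2)$.
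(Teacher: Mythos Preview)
Your proposal is correct and aligns with the paper's treatment. The paper does not give a self-contained proof of this proposition; it cites Gregersen \cite[Proposition 4.1.37]{Gre12} and offers only the one-sentence indication preceding the statement: ``One can extend the isomorphism of $\m_2$-modules in the previous proposition to an isomorphism of $A$-modules by continuing the periodic action of the motivic Steenrod operations on $L^\infty$ to the negative cells.'' Your three-step outline---inheriting the $A$-module structure on the colimit, extending the Steenrod formulas to negative $k$ via the Thom isomorphism or Cartan formula, and matching with Gregersen's definition of $R_+(\m_2)$---is exactly an elaboration of that sentence, so there is no substantive difference in approach.
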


The argument sketched at the beginning of this section using the inverse limit motivic Adams spectral sequence proves the following.

\begin{thm} ~\cite[Theorem 2.0.2]{Gre12}\label{lin} There is a $\pi_{**}$-isomorphism
$$S \to \Sigma^{1,0}\underline{L}^\infty_{-\infty}$$
after $2,\eta$-completion.
\end{thm}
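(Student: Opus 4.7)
The strategy is to transplant the classical Lin--Davis--Mahowald--Adams argument into the motivic setting, using as the central input Gregersen's motivic Singer construction $R_+(-)$ and the identification $H^{**}_c(\Sigma^{1,0}\underline{L}^\infty_{-\infty}) \cong R_+(\m_2)$ from the preceding proposition. More precisely, the plan is to build a map $S \to \Sigma^{1,0}\underline{L}^\infty_{-\infty}$, show it induces an isomorphism on $E_2$-pages of motivic Adams spectral sequences, and then invoke convergence of the motivic Adams spectral sequence after $2,\eta$-completion to conclude.

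First I would construct the comparison map. For each $k\geq 0$ the bottom $(1,0)$-cell of $\Sigma^{1,0}\underline{L}^\infty_{-k}$ gives a map $S^{0,0} \to \Sigma^{1,0}\underline{L}^\infty_{-k}$, and by the previous lemma these assemble compatibly as $k$ increases. Taking the homotopy inverse limit produces a map $\lambda : S^{0,0} \to \Sigma^{1,0}\underline{L}^\infty_{-\infty}$. On continuous motivic cohomology, $\lambda^\ast$ is identified with the unit $\m_2 \to R_+(\m_2)$ of Gregersen's motivic Singer construction (after the $\Sigma^{1,0}$-shift), essentially by tracking the bottom cell through the identification in the previous proposition.

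Next I would run the motivic Adams spectral sequence on both sides. The left side gives the usual motivic Adams $E_2 = \operatorname{Ext}_{A}^{**,**}(\m_2,\m_2)$ converging to $\pi_{**}(S^{0,0})^\wedge_{2,\eta}$ by the work of Hu--Kriz--Ormsby / Dugger--Isaksen. For the right side I would use the inverse limit motivic Adams spectral sequence, whose $E_2$-page is $\operatorname{Ext}_A^{**,**}(H^{**}_c(\Sigma^{1,0}\underline{L}^\infty_{-\infty}), \m_2) = \operatorname{Ext}_A^{**,**}(R_+(\m_2),\m_2)$. The key input from Gregersen is that $R_+$ is $\operatorname{Ext}_A$-equivalent: the unit $\m_2 \to R_+(\m_2)$ induces an isomorphism of $\operatorname{Ext}$-groups. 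Thus $\lambda^\ast$ induces an isomorphism of $E_2$-pages, hence of $E_\infty$-pages, and therefore of abutments. To get convergence of the inverse system one checks (as in Lin's original argument) that the relevant $\lim^1$ terms vanish on $E_2$-pages because of the tower of suspensions built into the filtration of $\underline{L}^\infty_{-\infty}$; increasing $k$ raises Adams filtration, so the inverse system of Adams $E_r$-pages satisfies Mittag--Leffler.

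The main obstacle is the convergence statement for the inverse limit motivic Adams spectral sequence, and in particular the identification of its abutment with $\pi_{**}(\Sigma^{1,0}\underline{L}^\infty_{-\infty})$ after $2,\eta$-completion. Classically this is delicate and requires a careful Mittag--Leffler / $\lim^1$-vanishing argument together with the comparison of the $(2,\eta)$-adic tower with the Adams tower; in the motivic setting the extra bidegree complicates both the $\lim^1$ estimate and the completion question, since one must $\eta$-complete in addition to $2$-completing. Once these bookkeeping issues are resolved, the $E_2$-level isomorphism produced by the Singer equivalence propagates to a $\pi_{**}$-isomorphism in exactly the Lin-style manner, completing the argument.
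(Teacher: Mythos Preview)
Your proposal is correct and follows essentially the same approach as the paper (which itself only sketches Gregersen's argument): identify $H^{**}_c(\Sigma^{1,0}\underline{L}^\infty_{-\infty})$ with the motivic Singer construction $R_+(\m_2)$, use the $\operatorname{Ext}_A$-equivalence of the Singer construction to obtain an isomorphism on $E_2$-pages, and then invoke convergence of the inverse limit motivic Adams spectral sequence after $(2,\eta)$-completion. Your identification of the convergence and Mittag--Leffler issues as the main technical points is also in line with how the classical and motivic proofs proceed.
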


Note that over $Spec(\c)$, $2$-completion and $2,\eta$-completion coincide by ~\cite{HKO11}, so it suffices for us to work in the $2$-completed setting.

\subsection{Definition of the motivic Mahowald invariant}
We begin by recalling the definition of the classical Mahowald invariant.

\begin{defin}~\label{cldef}
Let $\alpha \in \pi_t(S^0)$. The \emph{classical Mahowald invariant} of $\alpha$ is the coset of completions of the diagram
\[
\begin{tikzcd}
S^t \arrow{d}{\alpha} \arrow[r,dashed] & S^{-N+1} \arrow{dd} \\
S^0 \arrow{d}{\simeq} \\
\Sigma RP^\infty_{-\infty} \arrow{r} & \Sigma RP^\infty_{-N}
\end{tikzcd}
\]
where $N>0$ is minimal so that the left-hand composition is nontrivial. The equivalence on the left-hand side is by Lin's Theorem ~\cite{LDMA80}, and the dashed arrow is the lift to the fiber of the sequence
$$S^{-N+1} \to \Sigma RP^\infty_{-N} \overset{c}{\to} \Sigma RP^\infty_{-N+1}$$
which is nontrivial by choice of $N$. The classical Mahowald invariant of $\alpha$ will be denoted $M^{cl}(\alpha)$. 
\end{defin}

We now define the motivic analog of the classical Mahowald invariant. 

\begin{defin}~\label{mmidef} Let $\alpha \in \pi_{s,t}(S^{0,0})$. We define the \emph{motivic Mahowald invariant} of $\alpha$, denoted $M(\alpha)$, as follows. Consider the coset of completions of the following diagram
\[
\begin{tikzcd}
S^{s,t} \arrow[dashed,r] \arrow{d}{\alpha} & S^{-2N+1,-N} \vee S^{-2N+2,-N+1} \arrow{dd} \\
S^{0,0} \arrow{d}{\simeq} \\
\Sigma^{1,0} \underline{L}^\infty_{-\infty} \arrow{r} & \Sigma^{1,0} \underline{L}^\infty_{-N}
\end{tikzcd}
\]
where $N>0$ is minimal so that the left-hand composition is nontrivial. The equivalence on the left-hand side is by Gregersen's Theorem, and the dashed arrow is the lift to the fiber of the sequence
$$S^{-2N+1,-N} \vee S^{-2N+2,-N+1} \to \Sigma^{1,0} \underline{L}^\infty_{-N} \to \Sigma^{1,0} \underline{L}^\infty_{-N+1}$$
which is nontrivial by the choice of $N$. In contrast with the definition of the classical Mahowald invariant, the target of the dashed arrow is a wedge of spheres. If the composition of the dashed arrow with the projection onto the higher dimensional sphere is nontrivial, we define the motivic Mahowald invariant $M(\alpha)$ to be the coset of completions composed with the projection onto the higher dimensional sphere. Otherwise, the composition of the dashed arrow with the projection onto the higher dimensional sphere is trivial and we define the motivic Mahowald invariant $M(\alpha)$ to be the coset of completions composed with the projection onto the lower dimensional sphere. We illustrate this convention in the examples later in this section.
\end{defin}

\begin{rem2}
Note that in the classical setting we could have defined 
$$RP^\infty_{-\infty} = \lim_{\underset{n}{\longleftarrow}} RP^\infty_{-2n}.$$
Then given $\alpha \in \pi_t(S^0)$ we could consider the coset of completions of the following diagram
\[
\begin{tikzcd}
S^t \arrow{d}{\alpha} \arrow[r,dashed] & S^{-2N+1} \vee S^{-2N+2} \arrow{dd} \\
S^0 \arrow{d}{\simeq} \\
\Sigma RP^\infty_{-\infty} \arrow{r} & \Sigma RP^\infty_{-2N}
\end{tikzcd}
\]
where $N>0$ is minimal so that the left-hand composition is nontrivial. If the composition of the dashed arrow with the projection onto the higher dimensional sphere is nontrivial, we can define $\tilde{M}^{cl}(\alpha)$ to be the coset of completions composed with the projection onto the higher dimensional sphere. Otherwise, we can define $\tilde{M}^{cl}(\alpha)$ to be the coset of completions composed with the projection onto the lower dimensional sphere. Then $\tilde{M}^{cl}(\alpha) = M^{cl}(\alpha)$.
\end{rem2}

\subsection{Atiyah-Hirzebruch spectral sequence for $\Sigma^{1,0} \underline{L}^\infty_{-\infty}$}
We can use the Atiyah-Hirzebruch spectral sequence to determine which cell of $\Sigma^{1,0} \underline{L}^\infty_{-\infty}$ a class $\alpha \in \pi_{**}(S^{0,0})$ is detected on. In this subsection, we analyze this spectral sequence in a range. These computations will be used in the next subsection to compute some motivic Mahowald invariants.

The Atiyah-Hirzebruch spectral sequence arises from the cellular filtration of $\Sigma^{1,0} \underline{L}^\infty_{-\infty}$. This spectral sequence has the form
$$E^1_{s,t,u} = \pi_{t,u}(S^{s,\lfloor s/2 \rfloor}) = \pi_{t-s,u-\lfloor s/2 \rfloor}(S^{0,0}) \Rightarrow \pi_{t,u}(\Sigma^{1,0} \underline{L}^\infty_{-\infty}) \cong  \pi_{t,u}(S^{0,0}).$$
We will denote classes in $E^1_{s,t,u}$ by $x[s]$ where $x \in \pi_{**}(S^{0,0})$. 

Differentials are induced by attaching maps in $\Sigma^{1,0} \underline{L}^\infty_{-\infty}$. Let $\eta, \nu,\sigma \in \pi_*(S^0)$ denote the classical Hopf invariant one elements and let $\eta_{alg}, \nu_{alg}, \sigma_{alg} \in \pi_{**}(S^{0,0})$ denote the motivic Hopf invariant one elements constructed in ~\cite{DI13}. Recall the following correspondence between attaching maps and $A$-module structure. 

\begin{lem}~\label{dam}
In $H^*(RP^\infty_{-\infty})$, $Sq^1$ detects a $\cdot 2$-attaching map, $Sq^2$ detects an $\eta$-attaching map, $Sq^4$ detects a $\nu$-attaching map, and $Sq^8$ detects a $\sigma$-attaching map.

In $H^{**}(\underline{L}^\infty_{-\infty})$, $Sq^1$ detects a $\cdot 2$-attaching map, $Sq^2$ detects an $\eta_{alg}$-attaching map, $Sq^4$ detects a $\nu_{alg}$-attaching map, and $Sq^8$ detects a $\sigma_{alg}$-attaching map.
\end{lem}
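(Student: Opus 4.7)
The plan is to invoke the standard principle that attaching maps in a cell spectrum are detected (up to higher filtration indeterminacy) by the Steenrod operations on its cohomology, and to apply it in both the classical and motivic settings.

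For the classical statement, recall that whenever $X = Y \cup_f e^n$ is built from $Y$ by attaching an $n$-cell along $f\colon S^{n-1} \to Y$, any stable cohomology operation $\theta$ acts on the top cell of $X$ through the detection of $f$ by $\theta$ on the relevant two-cell subquotient. For the classical Hopf maps, Adams' Hopf invariant one theorem says that $2$, $\eta$, $\nu$, $\sigma$ are the elements whose cofibers carry nontrivial $Sq^1$, $Sq^2$, $Sq^4$, $Sq^8$, respectively. The first part of the lemma is then obtained by extracting the appropriate two-cell subquotients of $RP^\infty_{-\infty}$ and reading off the Steenrod action from the classical Singer construction.

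The motivic argument follows the same template. The motivic Hopf invariant one elements $\eta_{alg} \in \pi_{1,1}$, $\nu_{alg} \in \pi_{3,2}$, $\sigma_{alg} \in \pi_{7,4}$ constructed by Dugger--Isaksen in~\cite{DI13} are, by construction, characterized by the fact that their two-cell cofibers carry nontrivial $Sq^2$, $Sq^4$, $Sq^8$, respectively, on their motivic cohomology. Combining this with the formulas for the motivic Steenrod action on $H^{**}(\underline{L}^\infty_{-\infty}) \cong \m_2[u,v,v^{-1}]/(u^2 + \tau v)$ extending Voevodsky's formulas to negative powers of $v$, and with the cell structure of $\Sigma^{1,0} \underline{L}^\infty_{-\infty}$ having exactly one cell in bidegree $(s, \lfloor s/2 \rfloor)$ for each $s$, one again extracts two-cell subquotients realizing each claimed attaching map.

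The only genuine content is bookkeeping of bidegrees: one must verify that $Sq^{2i}$ of bidegree $(2i,i)$ and $Sq^{2i+1}$ of bidegree $(2i+1,i)$ line up with attaching maps of bidegrees $(0,0)$, $(1,1)$, $(3,2)$, $(7,4)$ between the cells $S^{s,\lfloor s/2\rfloor}$. This is a direct check using the cell bidegrees recorded just before the lemma, and requires no further input beyond the propositions of Gregersen and the constructions of~\cite{DI13}. I expect no substantive obstacle; the lemma is essentially a translation of the Dugger--Isaksen detection statement into the filtration language of the Atiyah--Hirzebruch spectral sequence for $\Sigma^{1,0} \underline{L}^\infty_{-\infty}$.
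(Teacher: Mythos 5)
Your argument is correct and is essentially the same one the paper uses: the paper disposes of this lemma in a single sentence by citing Steenrod--Epstein (Lemma I.5.3 of \cite{SE62}) for the general fact that $Sq^{2^i}$ on a two-cell (sub)quotient detects the corresponding Hopf-invariant-one attaching map, and Dugger--Isaksen (Remark 4.14 of \cite{DI13}) for the motivic analogue; your proposal simply unwinds those two citations into the detection principle plus a bidegree bookkeeping check. One small imprecision: what you need is the easy direction that $2,\eta,\nu,\sigma$ have Hopf invariant one (not Adams' nonexistence theorem), but that does not affect the argument.
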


The classical part of the lemma follows from ~\cite[Lemma I.5.3]{SE62}, and the motivic part follows from ~\cite[Remark 4.14]{DI13}. One can apply the classical part of the lemma to show that $\eta \in M^{cl}(2)$, $\nu \in M^{cl}(\eta)$, and $\sigma \in M^{cl}(\nu)$, recovering a result of Mahowald-Ravenel ~\cite[Proposition 2.3]{MR93}. 

In view of the lemma, the $d^1$-differentials in the Atiyah-Hirzebruch spectral sequence have the form $d^1(x[k])) = 2x[k-1]$ for $k \equiv 1 \mod 2$. Examination of ~\cite[Page 8]{Isa14b} gives the differentials in the following figure.

\begin{fig}
The $E^1$-page of the Atiyah-Hirzebruch spectral sequence for $-5 \leq s \leq 5$ and $-5 \leq t \leq 5$ with $d^1$-differentials included. 

\hskip1.6in \includegraphics[scale=.5]{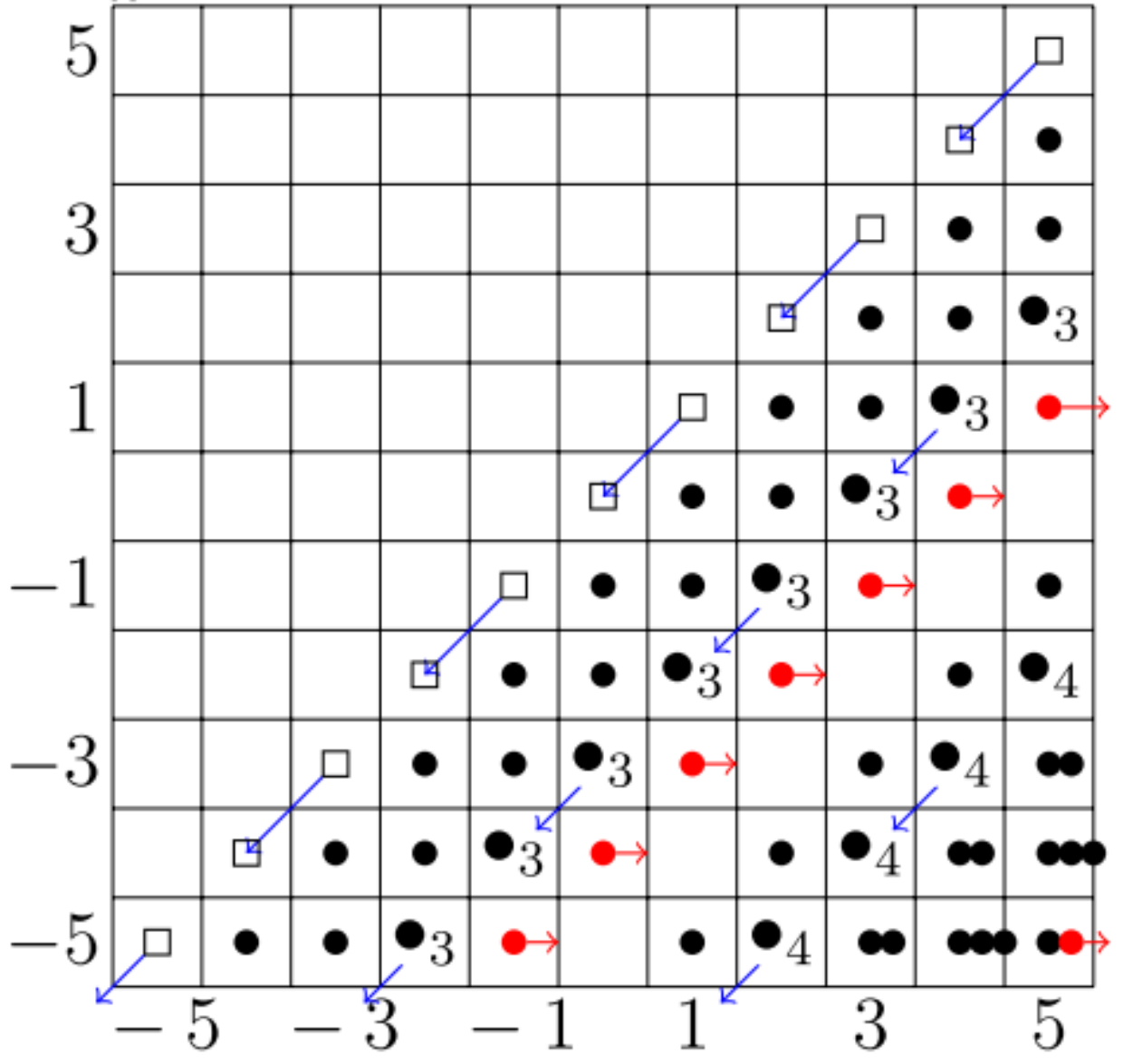}

A $\square$ represents $\z_{(2)}[\tau]$, a $\bullet$ represents $\f_2[\tau]$, a $\bullet_n$ represents $\z/2^n[\tau]$, and a red $\bullet$ represents $\f_2$. Differentials are blue and $\tau$-linear, and in this chart, are multiplication by two.
\end{fig}

The $d^2$-differentials have the form $d^1(x[k])) = \eta_{alg} x[k-2]$ for $k \equiv 1,2 \mod 4$. Examination of ~\cite[Page 8]{Isa14b} gives the differentials in the following figure.

\begin{fig}
The $E^2$-page of the Atiyah-Hirzebruch spectral sequence for $-5 \leq s \leq 5$ and $-5 \leq t \leq 5$ with $d^2$-differentials included.

\hskip1.6in \includegraphics[scale=.5]{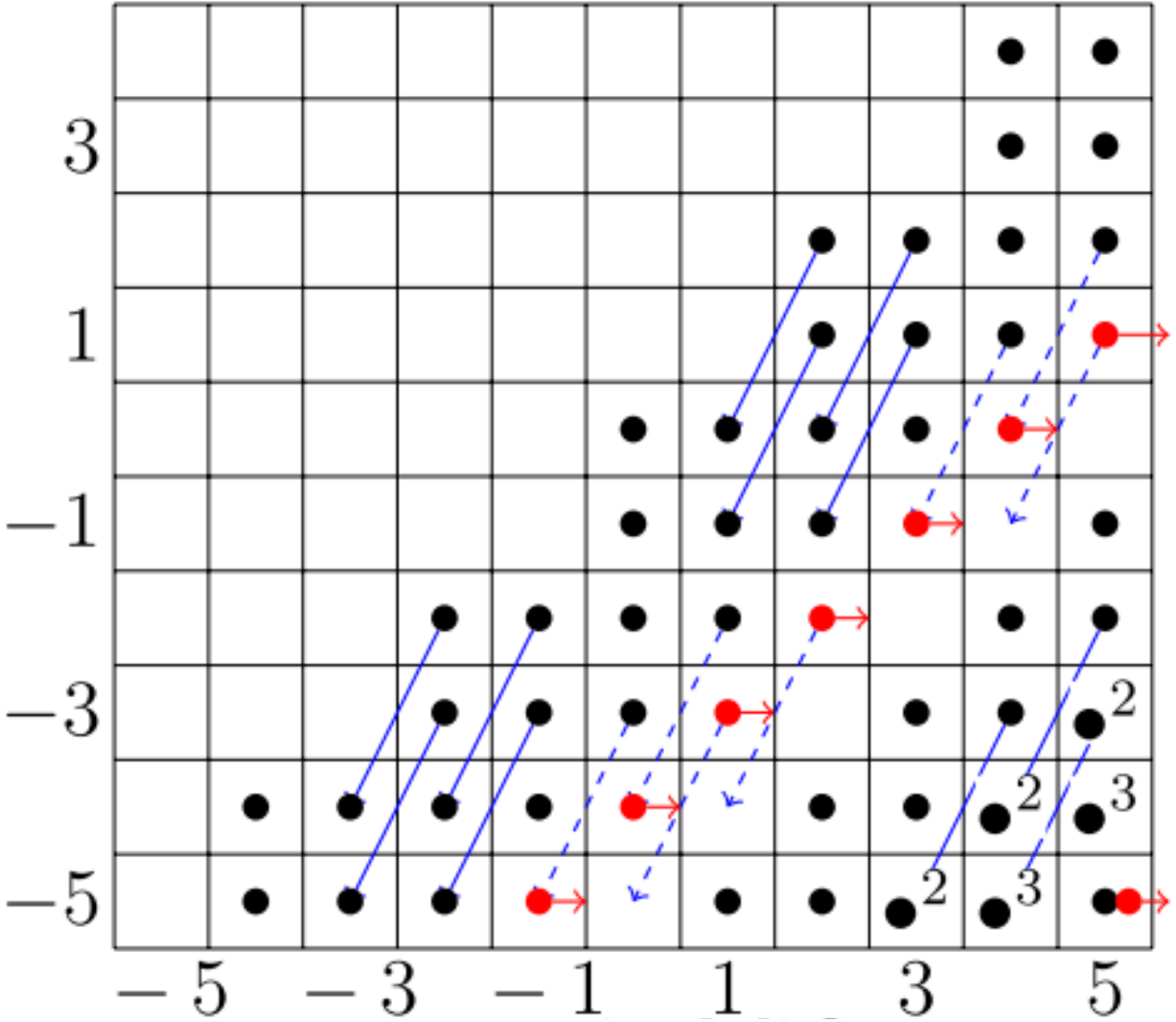}

A $\bullet$ represents $\f_2[\tau]$, a $\bullet^n$ represents $(\f_2[\tau])^{\oplus n}$, and a red $\bullet$ represents $\f_2$. Differentials are blue and $\tau$-linear; a dashed differential means the source, target, or both are $\tau$-torsion.
\end{fig}

The $d^3$-differentials have the form $d^3(x[k]) = \langle x, 2,\eta_{alg} \rangle[k-3]$ for $k \equiv 3 \mod 8$ or $d^3(x[k]) = \langle x, \eta_{alg}, 2 \rangle[k-3]$ for $k \equiv 5 \mod 8$. The $d^4$-differentials have the form $d^4(x[k]) = \nu_{alg}[k-4]$ for $k \equiv 1,2,3,4 \mod 8$. There are no $d^5$-differentials; these correspond to attaching maps $\eta_{alg}^4 : S^{n,m} \to S^{n-4,m-4}$, but every class of the form $\eta_{alg}^4 x[k]$ is the target of a $d^2$-differential. The $d^6$-differentials have the form $d^6(x[k])) = \langle x, \eta_{alg}, \nu_{alg} \rangle[k-6]$ for $k \equiv 6 \mod 16$ or $d^6(x[k]) = \langle x , \nu_{alg}, \eta_{alg} \rangle[k-6]$ for $k \equiv 10 \mod 16$. Examination of ~\cite[Page 8]{Isa14b} and ~\cite[Table 23]{Isa14} gives the differentials in the following figure.

\begin{fig}
The $E^3$-page of the Atiyah-Hirzebruch spectral sequence for $-4 \leq s \leq 5$ and $-5 \leq t \leq 5$ with some of the $d^3$- through $d^6$-differentials included. A $\bullet$ represents $\f_2[\tau]$. Differentials are blue and $\tau$-linear.

\hspace{1.6in}\includegraphics[scale=.5]{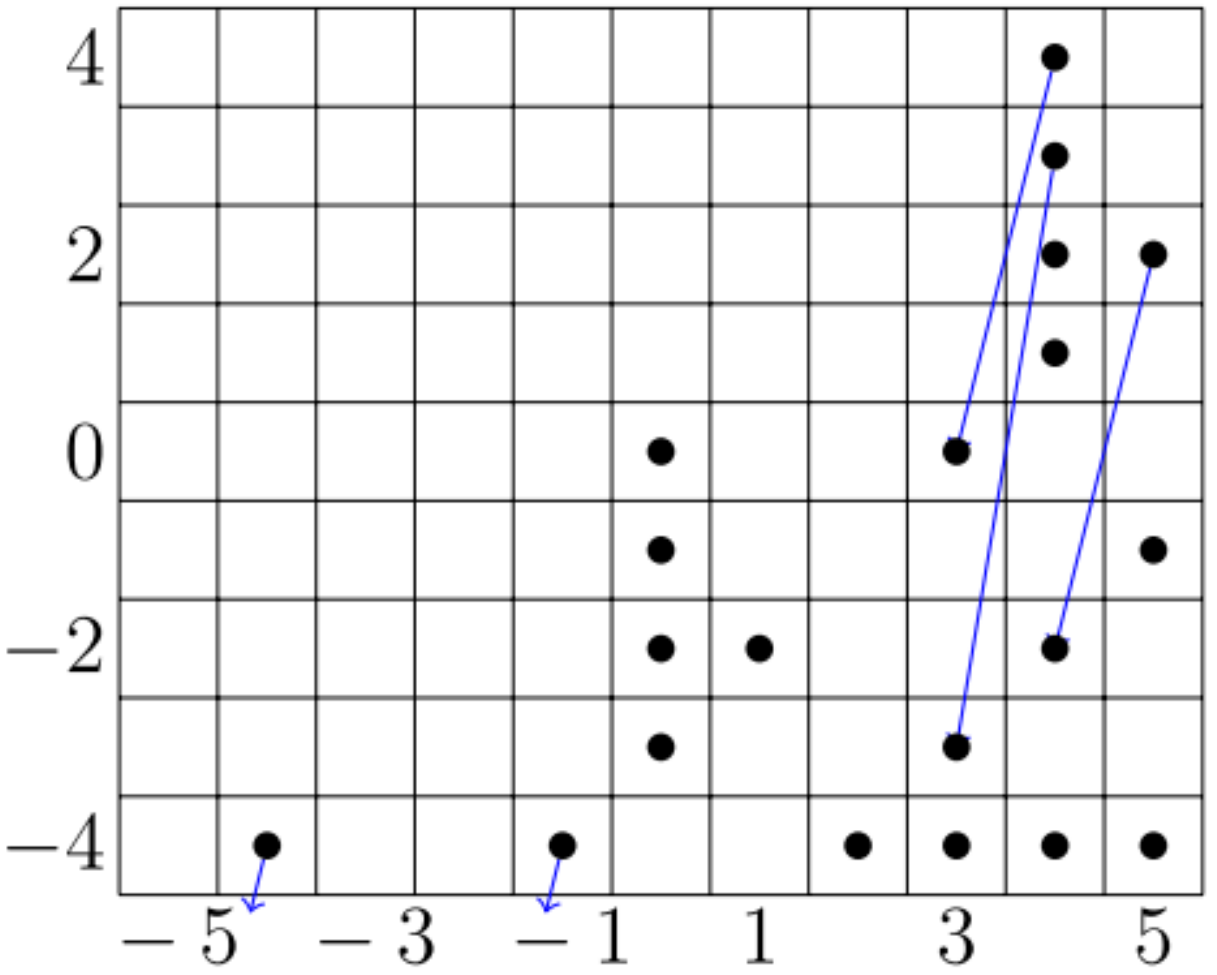}
\end{fig}

\begin{fig}\label{helpme}
The $E^7$-page of the Atiyah-Hirzebruch spectral sequence for $-4 \leq s \leq 0$ and $-4 \leq t \leq 3$. In the lower half of the figure, each $\bullet$ is labeled with the class it detects in $\pi_{**}(S^{0,0})$; we have omitted the subscript `$alg$' for space considerations.

\hspace{1.7in} \includegraphics[scale=.5]{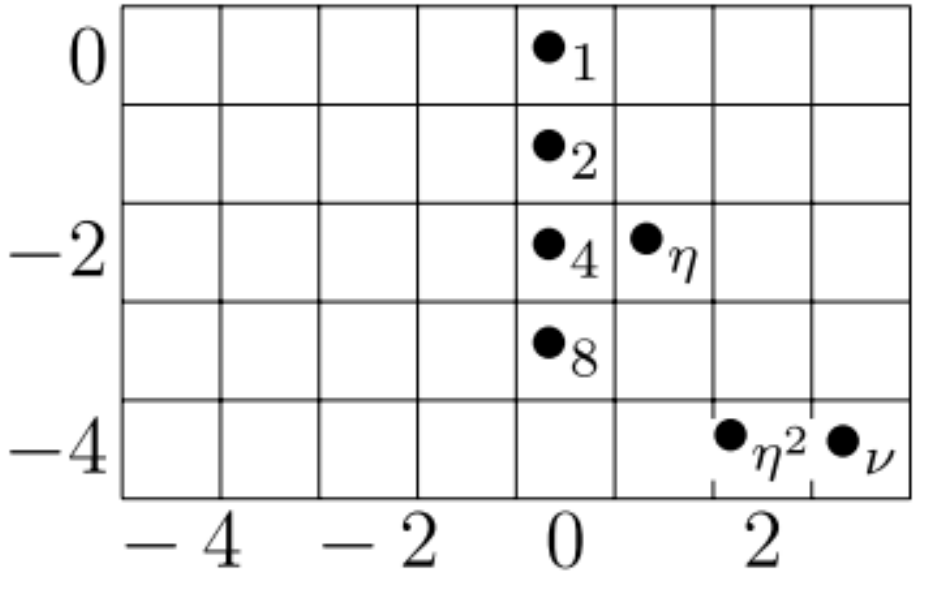}
\end{fig}

There could \emph{a priori} be further differentials in this range. However, we will see in the next subsection that further differentials would produce a contradiction to classical computations of the Atiyah-Hirzebruch spectral sequence for $\Sigma RP^\infty_{-\infty}$ after applying Betti realization. Therefore we have the following.

\begin{prop}
Figure ~\ref{helpme} depicts the $E^\infty$-page of the Atiyah-Hirzebruch spectral sequence for $-4 \leq s \leq 0$ and $-4 \leq t \leq 3$. 
\end{prop}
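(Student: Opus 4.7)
The task is to verify that no $d^r$-differential with $r \geq 7$ affects classes in the displayed range $-4 \leq s \leq 0$, $-4 \leq t \leq 3$. All of the $d^1$ through $d^6$ differentials have been computed in the preceding figures using the attaching-map dictionary of Lemma~\ref{dam} together with Isaksen's tables for the low-dimensional motivic stable stems, so Figure~\ref{helpme} correctly records the $E^7$-page in this range. The remaining content of the proposition is the vanishing of all higher differentials into or out of this range.

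\smallskip
\noindent
\textbf{Betti realization comparison.} My main tool would be the Betti realization functor $\operatorname{Re}_B : \mathrm{SH}(\c) \to \mathrm{SH}$, which sends $S^{s,t}$ to $S^s$ and sends the motivic $\Sigma^{1,0}\underline{L}^\infty_{-\infty}$ to the classical $\Sigma RP^\infty_{-\infty}$ (since $\operatorname{Re}_B$ takes $B(\mu_2)_{gm}$ to $B\z/2 = RP^\infty$ and commutes with the Thom space and homotopy limit constructions used to define the stunted objects). Because $\operatorname{Re}_B$ preserves the cellular filtration used to build the Atiyah-Hirzebruch spectral sequence, it induces a morphism from the motivic AHSS to the classical AHSS for $\Sigma RP^\infty_{-\infty}$. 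Any motivic differential $d^r$ whose source and target have nontrivial Betti realization therefore produces a classical $d^r$-differential of the same length.

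\smallskip
\noindent
\textbf{Contradiction with the classical computation.} The classical $E^\infty$-page of the Atiyah-Hirzebruch spectral sequence for $\Sigma RP^\infty_{-\infty}$ in the corresponding range is determined by the attaching maps in $\Sigma RP^\infty_{-\infty}$ and is consistent with Mahowald-Ravenel's analysis in ~\cite{MR93}. After applying $\operatorname{Re}_B$, the $E^7$-page of Figure~\ref{helpme} maps isomorphically onto the classical $E^7$-page in this range, and the classical picture has no further differentials. Consequently, any hypothetical motivic $d^r$-differential with $r \geq 7$ into or out of the displayed range would either contradict the classical computation or would have to have source or target annihilated by $\operatorname{Re}_B$, i.e.\ be $\tau$-torsion.

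\smallskip
\noindent
\textbf{The main obstacle and how I would handle it.} The delicate point is ruling out purely $\tau$-torsion higher differentials, since these are invisible to Betti realization. I would resolve this by inspecting Figure~\ref{helpme} directly: every nontrivial entry in the displayed range is labeled by a class in $\pi_{**}(S^{0,0})$ and inherits its $\m_2$-module structure from the $E^1$-term $\pi_{t,u}(S^{s,\lfloor s/2\rfloor})$, so each is a free $\f_2[\tau]$-module summand on the $E^7$-page, with no $\tau$-torsion classes among either the potential sources or potential targets. Together with the Betti realization argument, this forces all $d^r$ for $r \geq 7$ to vanish in the stated range, completing the identification of Figure~\ref{helpme} with the $E^\infty$-page.
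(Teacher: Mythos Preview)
Your approach is essentially the same as the paper's. The paper's ``proof'' is only the sentence preceding the proposition: ``we will see in the next subsection that further differentials would produce a contradiction to classical computations of the Atiyah-Hirzebruch spectral sequence for $\Sigma RP^\infty_{-\infty}$ after applying Betti realization.'' You have simply made that Betti realization comparison explicit and self-contained rather than leaving it as a forward reference to the Mahowald invariant computations in \S2.4.

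One small point worth tightening: your $\tau$-torsion check inspects only the classes \emph{inside} the displayed window, but a $d^r$ with $r\geq 7$ has one end outside $-4\leq s\leq 0$. The case of a $\tau$-torsion source hitting a $\tau$-free target in the window is automatically excluded (if $\tau^n z=0$ and $d^r(z)=x$ with $x$ $\tau$-free, then $\tau^n x=0$, a contradiction), but a $\tau$-free source in the window mapping to a $\tau$-torsion target at $s\leq -7$ is not ruled out by your inspection of Figure~\ref{helpme} alone. In practice this is harmless here because the relevant stems $\pi_{t-s,u-\lfloor s/2\rfloor}(S^{0,0})$ for the possible targets are in a range where Isaksen's tables show no $\tau$-torsion, but you should say so rather than appeal only to the figure. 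The paper does not address this point explicitly either; it is absorbed into the low-dimensional bookkeeping of the next subsection.
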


\subsection{Elementary computations} 
We will now compute the motivic Mahowald invariants of some classes in low degrees. 

\begin{prop} We have $\eta_{alg} \in M(2) \subset \pi_{1,1}(S).$
\end{prop}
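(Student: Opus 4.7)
The strategy mirrors Mahowald--Ravenel's classical proof of $\eta \in M^{cl}(2)$ in~\cite[Prop.~2.3]{MR93}, with one new wrinkle: the bottom of $\Sigma^{1,0}\underline{L}^\infty_{-N}$ is a wedge of two spheres rather than a single sphere. The core of the argument is a direct analysis of the attaching structure at the bottom of $\Sigma^{1,0}\underline{L}^\infty_{-1}$.

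First I would establish that $N=1$ is the minimal value for which the composite $S^{0,0}\xrightarrow{2}S^{0,0}\simeq\Sigma^{1,0}\underline{L}^\infty_{-\infty}\to\Sigma^{1,0}\underline{L}^\infty_{-N}$ is nontrivial. Combining Lemma~\ref{dam} with the motivic Steenrod formulas, the cell $S^{1,0}$ of $\Sigma^{1,0}\underline{L}^\infty_{-1}$ is attached to its two bottom cells $S^{0,0}$ and $S^{-1,-1}$ via $\cdot 2$ and $\eta_{alg}$ respectively: the relation $Sq^1(u)=v$ gives the first attaching, while the relation $Sq^{2}(v^{-1})=v^{0}$ (valid once the $A$-action is extended periodically to the negative $v$-cells, as noted just before Proposition~\ref{lin}) gives the second. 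Consequently, in the fiber sequence
\[
S^{-1,-1}\vee S^{0,0}\longrightarrow\Sigma^{1,0}\underline{L}^\infty_{-1}\longrightarrow\Sigma^{1,0}\underline{L}^\infty_{0},
\]
the connecting map $\partial:\pi_{1,0}(\Sigma^{1,0}\underline{L}^\infty_{0})\cong\z_2\to\pi_{1,1}(S^{0,0})\oplus\pi_{0,0}(S^{0,0})$ sends $1\mapsto(\eta_{alg},2)$, from which a short cokernel computation yields $\pi_{0,0}(\Sigma^{1,0}\underline{L}^\infty_{-1})\cong\z/4$ and identifies the image of $2\in\pi_{0,0}(S^{0,0})$ under the composite with $2\in\z/4$, which is nonzero.

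Finally I would read off the lift. The coset of preimages of $2\in\z/4$ in $\pi_{1,1}(S^{0,0})\oplus\pi_{0,0}(S^{0,0})$ consists of all pairs $(a\eta_{alg},b)$ with $2a+b\equiv 2\pmod 4$. Among these, the choice $(a,b)=(1,0)$ produces a lift whose projection onto the higher-stem sphere $S^{0,0}$ vanishes, so by the convention in Definition~\ref{mmidef} we read off $M(2)$ from the projection onto the lower-stem sphere $S^{-1,-1}$, obtaining $\eta_{alg}\in M(2)\subset\pi_{1,1}(S^{0,0})$. The main subtle point is the identification of the $\eta_{alg}$-attaching of $S^{1,0}$ to $S^{-1,-1}$ via the extended Steenrod action; without this attaching the spectrum would split off $S^{-1,-1}$, the class $2$ would fail to survive the projection to $\Sigma^{1,0}\underline{L}^\infty_{-1}$, and we would be forced to $N\geq 2$, placing $M(2)$ in the wrong bigraded stem.
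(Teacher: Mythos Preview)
Your approach is genuinely different from the paper's, and in some ways more direct. The paper establishes nontriviality of the composite $S^{0,0}\xrightarrow{2}S^{0,0}\to\Sigma^{1,0}\underline{L}^\infty_{-1}$ by applying Betti realization and invoking the classical fact $\eta\in M^{cl}(2)$ from~\cite{MR93}; it then identifies the relevant cell by reading off the Atiyah--Hirzebruch $E^\infty$-page in Figure~\ref{helpme}. You instead compute $\pi_{0,0}(\Sigma^{1,0}\underline{L}^\infty_{-1})\cong\z/4$ from the long exact sequence and the $Sq^1,Sq^2$-attaching structure, which is self-contained and avoids both the classical input and the spectral-sequence chart.

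There is, however, a genuine gap. Your cokernel computation describes the map from the \emph{fiber} $S^{-1,-1}\vee S^{0,0}$ into $\Sigma^{1,0}\underline{L}^\infty_{-1}$, but it says nothing about where the Gregersen equivalence $S^{0,0}\simeq\Sigma^{1,0}\underline{L}^\infty_{-\infty}$ followed by projection sends the unit. Your assertion that ``the image of $2\in\pi_{0,0}(S^{0,0})$ under the composite'' is $2\in\z/4$ presupposes that $1$ hits a generator of $\z/4$, which is exactly the statement that the Gregersen map factors (up to a unit) through the inclusion of the $(0,0)$-cell. This is true --- it is how the map in Theorem~\ref{lin} is constructed --- but you must invoke it explicitly; otherwise nothing rules out $1\mapsto 2$ or $1\mapsto 0$, which would force $N\geq 2$ and put $M(2)$ in the wrong stem. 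The paper's Betti-realization argument sidesteps this issue entirely, since essentiality of the realized composite immediately gives essentiality of the motivic one.
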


\begin{proof}
We must determine the minimal $N >0$ such that the composition
$$S^{0,0} \overset{\cdot 2}{\to} S^{0,0} \simeq \Sigma^{1,0}\underline{L}^\infty_{-\infty} \to \Sigma^{1,0}\underline{L}^\infty_{-N}$$
is nontrivial. We claim that the composition
\begin{align}
S^{0,0} \overset{\cdot 2}{\to} S^{0,0} \simeq \Sigma^{1,0}\underline{L}^\infty_{-\infty} \to \Sigma^{1,0}\underline{L}^\infty_{-1}
\end{align}
is nontrivial. To show this, we use the Betti realization functor of Morel-Voevodsky ~\cite[Section 3.3.2]{MV99}
$$Re_\c : SH_\c \to SH$$
from the motivic stable homotopy category over $Spec(\c)$ to the classical stable homotopy category. Betti realization carries motivic spheres $S^{m,n}$ to classical spheres $S^m$ and therefore induces a map $\pi_{**}(S^{0,0}) \to \pi_*(S^0)$. This can be used to show that $Re_\c(\underline{L}^\infty_{-N}) \simeq RP^\infty_{-2N}$. Therefore applying $Re_\c$ to the composite above gives
$$S^0 \overset{\cdot 2}{\to} S^0 \simeq \Sigma RP^\infty_{-\infty} \to \Sigma RP^\infty_{-2},$$
which is nontrivial by the computation $\eta \in M^{cl}(2)$ from ~\cite[Proposition 2.3]{MR93}. Therefore the composition $(1)$ is nontrivial and we see that $M(2)$ is the coset of completions of the diagram
\[
\begin{tikzcd}
S^{0,0} \arrow[dashed,r] \arrow{d}{\cdot 2} & S^{0,0} \vee S^{-1,-1} \arrow{dd} \\
S^{0,0} \arrow{d}  \\
\Sigma^{1,0} \underline{L}^\infty_{-\infty} \arrow{r} & \Sigma^{1,0} \underline{L}^\infty_{-1}.
\end{tikzcd}
\]
We now must determine which cell in the wedge the motivic Mahowald invariant $M(2)$ is detected on. We claim that composition of the dashed arrow with projection onto the higher dimensional sphere in the wedge $S^{0,0}$ is null. To see this, note that there is no class in Atiyah-Hirzebruch filtration $s=0$ which could detect $2$ by Figure ~\ref{helpme}. Therefore $M(2)$ is detected on $S^{-1,-1}$ so $M(2) \subset \pi_{1,1}(S^{0,0})$. The only nontrivial class in this bidegree is $\eta_{alg}$, so we conclude $\eta_{alg} \in M(2)$.
\end{proof}

\begin{prop}
We have $\nu_{alg} \in M(\eta_{alg}) \subset \pi_{3,2}(S).$
\end{prop}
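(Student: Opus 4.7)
The plan is to mirror the proof of the preceding proposition for $M(2)$: first determine the minimal $N > 0$ so that the composition
$$S^{1,1} \overset{\eta_{alg}}{\to} S^{0,0} \simeq \Sigma^{1,0}\underline{L}^\infty_{-\infty} \to \Sigma^{1,0}\underline{L}^\infty_{-N}$$
is nontrivial, then read off the motivic Mahowald invariant from the Atiyah-Hirzebruch filtration in which $\eta_{alg}$ is detected.

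First I would show that $N = 2$ suffices. Applying the Betti realization functor $Re_\c$ of ~\cite[Section 3.3.2]{MV99} with $Re_\c(\underline{L}^\infty_{-2}) \simeq RP^\infty_{-4}$, the motivic composition realizes to $S^1 \overset{\eta}{\to} S^0 \simeq \Sigma RP^\infty_{-\infty} \to \Sigma RP^\infty_{-4}$, which is nontrivial by the classical computation $\nu \in M^{cl}(\eta)$ from ~\cite[Proposition 2.3]{MR93}. Hence the motivic composition for $N = 2$ is nontrivial.

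Next I would rule out $N = 1$, which I expect to be the main obstacle: Betti realization alone does not suffice because the classical truncation $\Sigma RP^\infty_{-2}$ already detects $\eta$ trivially. Instead, I would argue directly in the motivic AHSS for $\Sigma^{1,0}\underline{L}^\infty_{-1}$, whose cells satisfy $s \geq -1$. The only $E^1$-page candidates for a class in bidegree $(1,1)$ are $\eta_{alg}[0]$ and $\eta_{alg}^2[-1]$, and both are killed by $d^2$-differentials emanating from the still-present upper cells, namely $d^2(1[2]) = \eta_{alg}[0]$ and $d^2(\eta_{alg}[1]) = \eta_{alg}^2[-1]$. Therefore $\eta_{alg}$ maps to zero in $\pi_{1,1}(\Sigma^{1,0}\underline{L}^\infty_{-1})$.

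Finally, to determine which wedge summand of $S^{-3,-2} \vee S^{-2,-1}$ detects $M(\eta_{alg})$, I would combine Figure ~\ref{helpme} with the differentials just described to locate the filtration on which $\eta_{alg}$ is detected in $\Sigma^{1,0}\underline{L}^\infty_{-\infty}$. The classes $\eta_{alg}[0]$ and $\eta_{alg}^2[-1]$ die by the same $d^2$-differentials as above, while the next candidate $\nu_{alg}[-2]$ is a permanent cycle: the a priori $d^4(1[2]) = \nu_{alg}[-2]$ is vacuous because $1[2]$ has already died on the $E^3$-page via $d^2(1[2]) = \eta_{alg}[0]$. Thus $\eta_{alg}$ is detected on the cell $s = -2$, which corresponds to the higher-dimensional wedge summand $S^{-2,-1}$. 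By the convention of Definition ~\ref{mmidef}, this places $M(\eta_{alg}) \subset \pi_{1,1}(S^{-2,-1}) = \pi_{3,2}(S^{0,0})$, and since $\nu_{alg}$ is the unique nontrivial class in this bidegree, I conclude $\nu_{alg} \in M(\eta_{alg})$.
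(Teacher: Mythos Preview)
Your argument follows essentially the same route as the paper: Betti realization gives the upper bound $N=2$, the Atiyah-Hirzebruch spectral sequence rules out $N=1$, and inspection of the surviving class at $s=-2$ identifies the Mahowald invariant. The only difference is cosmetic—where the paper simply cites Figure~\ref{helpme} (the $E^7=E^\infty$-page) to see that nothing survives in filtration $s\geq -1$ and that $\nu_{alg}[-2]$ is the detecting class, you unpack the relevant $d^2$-differentials $d^2(1[2])=\eta_{alg}[0]$ and $d^2(\eta_{alg}[1])=\eta_{alg}^2[-1]$ explicitly.

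One correction: your last sentence asserts that $\nu_{alg}$ is ``the unique nontrivial class'' in $\pi_{3,2}(S^{0,0})$, but in fact $\pi_{3,2}(S^{0,0})\cong \z/8\{\nu_{alg}\}$, so there are several nontrivial classes. This claim is both false and unnecessary: you have already argued that $\nu_{alg}[-2]$ is the surviving permanent cycle detecting $\eta_{alg}$ in the Atiyah-Hirzebruch spectral sequence, and that is exactly how the paper concludes $\nu_{alg}\in M(\eta_{alg})$. Simply drop the uniqueness claim and read the answer off from the $E^\infty$-page as the paper does. You might also note, for completeness, that there is no competing class at $s=-3$ (since $\pi_{4,3}(S^{0,0})=0$), which is implicit in the paper's appeal to Figure~\ref{helpme} together with the Betti realization bound.
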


\begin{proof}
We must determine the minimal $N>0$ such that the composition
$$S^{1,1} \overset{\eta_{alg}}{\to} S^{0,0} \simeq \Sigma^{1,0}\underline{L}^\infty_{-\infty} \to \Sigma^{1,0}\underline{L}^\infty_{-N}$$
is nontrivial. First, we observe that the composition where $N=1$ 
\begin{align}
S^{1,1} \overset{\eta_{alg}}{\to} S^{0,0} \simeq \Sigma^{1,0}\underline{L}^\infty_{-\infty} \to \Sigma^{1,0}\underline{L}^\infty_{-1}
\end{align}
is trivial. This can be seen from Figure ~\ref{helpme} since taking $N=1$ corresponds to removing everything below $s = -1$. In filtrations $s \geq -1$, there are no classes in $E^7$ which could detect $\eta_{alg}$. Therefore the above composition where $N=1$ is trivial. 

Now, we claim that the composition where $N=2$
\begin{align}
S^{1,1} \overset{\eta_{alg}}{\to} S^{0,0} \simeq \Sigma^{1,0}\underline{L}^\infty_{-\infty} \to \Sigma^{1,0}\underline{L}^\infty_{-2}
\end{align}
is nontrivial. Applying Betti realization gives
$$S^1 \overset{\eta}{\to} S^0 \simeq \Sigma RP^\infty_{-\infty} \to \Sigma RP^\infty_{-4}$$
which is nontrivial by the computation $\nu \in M^{cl}(\eta)$ from ~\cite[Proposition 2.3]{MR93}. Therefore the composition $(3)$ is nontrivial and  $M(\eta_{alg})$ is the coset of completions of the diagram
\[
\begin{tikzcd}
S^{1,1} \arrow[dashed,r] \arrow{d}{\eta_{alg}} & S^{-2,-1} \vee S^{-3,-2} \arrow{dd} \\
S^{0,0} \arrow{d}  \\
\Sigma^{1,0} \underline{L}^\infty_{-\infty} \arrow{r} & \Sigma^{1,0} \underline{L}^\infty_{-2}.
\end{tikzcd}
\]
We claim that composition of the dashed arrow with projection onto the higher dimensional sphere in the wedge $S^{-2,-1}$ is nontrivial. To see this, note that Atiyah-Hirzebruch filtration $s=-2$ in Figure ~\ref{helpme} is the only filtration where a class could detect $\eta_{alg}$ without contradicting the upper bound imposed by Betti realization. Therefore $M(\eta_{alg}) \subset \pi_{3,2}(S^{0,0}) \cong \z/8\{ \nu_{alg} \}$. By examination of the Atiyah-Hirzebruch spectral sequence $E^7$-page, we can see that the class detecting $\eta_{alg}$ is $\nu_{alg}[-2]$. Therefore $\nu_{alg} \in M(\eta_{alg})$.
\end{proof}

\begin{rem2}
We cannot use Betti realization alone to compute motivic Mahowald invariants. For example, in the previous proof we were able to use Betti realization to obtain an upper bound on the dimension of the motivic Mahowald invariant, but not a lower bound. Applying Betti realization to the composition $(2)$ gives
$$S^1 \overset{\eta}{\to} S^0 \simeq \Sigma RP^\infty_{-\infty} \to \Sigma RP^\infty_{-2}$$
which is trivial by the computation $\nu \in M^{cl}(\eta)$ from ~\cite[Proposition 2.3]{MR93}. However, Betti realization may send a nontrivial class $x \in \pi_{**}(S^{0,0})$ in the motivic stable stems to a trivial class $Re_\c(x) = 0 \in \pi_{*}(S^0)$ in the classical stable stems. Therefore we cannot conclude triviality of the composition $(2)$ from triviality of its Betti realization. 
\end{rem2}

\begin{prop}
We have $\sigma_{alg} \in M(\nu_{alg}) \subset \pi_{7,4}(S).$
\end{prop}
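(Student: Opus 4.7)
The plan is to mimic the proofs of the preceding two propositions. I would identify the minimal $N > 0$ for which the composition
\begin{align*}
S^{3,2} \overset{\nu_{alg}}{\to} S^{0,0} \simeq \Sigma^{1,0} \underline{L}^\infty_{-\infty} \to \Sigma^{1,0} \underline{L}^\infty_{-N}
\end{align*}
is nontrivial, then determine which summand of the fiber $S^{-2N+1,-N} \vee S^{-2N+2,-N+1}$ detects the lift. Based on the pattern of the previous two computations and the classical result $\sigma \in M^{cl}(\nu)$, I expect $N = 3$, with the motivic Mahowald invariant landing on the higher dimensional wedge summand $S^{-4,-2}$ as the class $\sigma_{alg} \in \pi_{7,4}(S^{0,0})$.

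To rule out $N \leq 2$, I would invoke Figure~\ref{helpme}: the stunted spectrum $\Sigma^{1,0} \underline{L}^\infty_{-N}$ for $N \leq 2$ only contains cells in Atiyah-Hirzebruch filtrations $s \geq -2N+1 \geq -3$, and inspection of the depicted $E^\infty$-page shows no class at total bidegree $(3,2)$ surviving in filtration $s \geq -3$ that could detect $\nu_{alg}$. To establish nontriviality at $N = 3$, I would apply the Betti realization functor $Re_\c$, under which the composition becomes
\begin{align*}
S^3 \overset{\nu}{\to} S^0 \simeq \Sigma RP^\infty_{-\infty} \to \Sigma RP^\infty_{-6},
\end{align*}
which is nonzero by the Mahowald-Ravenel computation $\sigma \in M^{cl}(\nu)$ from \cite[Proposition 2.3]{MR93}; the motivic preimage is therefore essential as well. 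Hence $N = 3$ is minimal, and $M(\nu_{alg})$ fits into the expected completions diagram with fiber $S^{-5,-3} \vee S^{-4,-2}$.

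To pin down the wedge summand, I would once more appeal to Figure~\ref{helpme}: at filtration $s = -4$ and total bidegree $(3,2)$, the only surviving class compatible with both the AHSS differential analysis of Subsection~2.3 and the Betti realization upper bound is $\sigma_{alg}[-4]$. This forces $M(\nu_{alg})$ onto the higher dimensional sphere $S^{-4,-2}$, placing it inside $\pi_{7,4}(S^{0,0})$, and reading off the detecting class yields $\sigma_{alg} \in M(\nu_{alg})$. The main delicate point is verifying that no hidden differential through $d^6$ (or higher) perturbs the $E^\infty$-page entries at the relevant positions and that the filtration jump from $s = -5$ to $s = -4$ really rules out the lower dimensional summand; this is controlled by the combination of the AHSS calculations of Subsection~2.3 with the Betti realization comparison to the classical Atiyah-Hirzebruch spectral sequence for $\Sigma RP^\infty_{-\infty}$, exactly as in the preceding propositions.
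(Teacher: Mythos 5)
Your proposal is correct and follows essentially the same approach as the paper: rule out $N = 1, 2$ by inspecting Figure~\ref{helpme} (no surviving class in filtrations $s \geq -3$ can detect $\nu_{alg}$), establish nontriviality at $N = 3$ via Betti realization to $\sigma \in M^{cl}(\nu)$, and then use Figure~\ref{helpme} together with the Betti realization upper bound to identify the detecting class as $\sigma_{alg}[-4]$, which places $M(\nu_{alg})$ on the higher wedge summand $S^{-4,-2}$. You also correctly wrote the Betti realization of the map as $\nu : S^3 \to S^0$, quietly fixing a typo that appears in the paper's version of this step.
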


\begin{proof}
We must determine the minimal $N>0$ such that the composition
$$S^{3,2} \overset{\nu_{alg}}{\to} S^{0,0} \simeq \Sigma^{1,0}\underline{L}^\infty_{-\infty} \to \Sigma^{1,0}\underline{L}^\infty_{-N}$$
is nontrivial. First, we observe that the compositions where $N=1$ or $N=2$
\begin{align}
S^{3,2} \overset{\nu_{alg}}{\to} S^{0,0} \simeq \Sigma^{1,0}\underline{L}^\infty_{-\infty} \to \Sigma^{1,0}\underline{L}^\infty_{-1}
\end{align}
\begin{align}
S^{3,2} \overset{\nu_{alg}}{\to} S^{0,0} \simeq \Sigma^{1,0}\underline{L}^\infty_{-\infty} \to \Sigma^{1,0}\underline{L}^\infty_{-2}
\end{align}
are trivial. This can be seen from Figure ~\ref{helpme} since taking $N=1$ or $N=2$ corresponds to removing everything below $s = -1$ or $s = -3$, respectively. In filtrations $s \geq -3$, there are no classes in $E^7$ which could detect $\nu_{alg}$. Therefore the above compositions where $N=1$ or $N=2$ are trivial. 

Now, we claim that the composition where $N=3$
\begin{align}
S^{3,2} \overset{\nu_{alg}}{\to} S^{0,0} \simeq \Sigma^{1,0}\underline{L}^\infty_{-\infty} \to \Sigma^{1,0}\underline{L}^\infty_{-3}
\end{align}
is nontrivial. Applying Betti realization gives
$$S^3 \overset{\eta}{\to} S^0 \simeq \Sigma RP^\infty_{-\infty} \to \Sigma RP^\infty_{-6}$$
which is nontrivial by the computation $\sigma \in M^{cl}(\nu)$ from ~\cite[Proposition 2.3]{MR93}. Therefore the composition $(6)$ is nontrivial and  $M(\nu_{alg})$ is the coset of completions of the diagram
\[
\begin{tikzcd}
S^{3,2} \arrow[dashed,r] \arrow{d}{\nu_{alg}} & S^{-4,-2} \vee S^{-5,-3} \arrow{dd} \\
S^{0,0} \arrow{d}  \\
\Sigma^{1,0} \underline{L}^\infty_{-\infty} \arrow{r} & \Sigma^{1,0} \underline{L}^\infty_{-3}.
\end{tikzcd}
\]
We claim that composition of the dashed arrow with projection onto the higher dimensional sphere in the wedge $S^{-4,-2}$ is nontrivial. To see this, note that Atiyah-Hirzebruch filtration $s=-4$ is the only filtration where a class could detect $\nu_{alg}$ without contradicting the upper bound given by Betti realization. Therefore $M(\nu_{alg}) \subset \pi_{7,4}(S^{0,0}) \cong \z/16\{ \sigma_{alg} \}$. By examination of the Atiyah-Hirzebruch spectral sequence $E^7$-page, we can see that the class detecting $\nu_{alg}$ is $\sigma_{alg}[-4]$. Therefore $\sigma_{alg} \in M(\nu_{alg})$.
\end{proof}

\subsection{Approximations to the motivic Mahowald invariant}
As we saw above, computing $M(\alpha)$ from the $A$-module structure of $H^{**}(\underline{L}^\infty_{-N})$ and the Atiyah-Hirzebruch spectral sequence  for a class $\alpha \in \pi_{**}(S^{0,0})$ can be somewhat involved. We will use the following approximations to compute the motivic Mahowald invariant. First, we give a definition.

\begin{defin}
The \emph{motivic $C_2$-Tate construction} of a motivic spectrum $E$ is defined by
$$E^{tC_2} := \lim_{\underset{k}{\longleftarrow}} (\Sigma^{1,0} E \wedge \underline{L}^\infty_{-k}).$$ 
\end{defin}

Note that this is \emph{not} the categorical $C_2$-Tate construction in the motivic stable homotopy category, which would use the universal space $EC_2$ over the simplicial classifying space of $C_2$. In contrast with the cells of $\underline{L}^\infty$, the cells of the simplicial classifying space for $C_2$ are concentrated in motivic weight zero. In view of the Atiyah-Hirzebruch spectral sequence computations above and in the sequel, this would not work for our purposes. 

\begin{defin}\label{mie} Let $E$ be a motivic spectrum, and let $\alpha \in \pi_{s,t}(E^{tC_2})$. We define the \emph{motivic $E$-Mahowald invariant}, denoted $M_E(\alpha)$, as follows. Consider the coset of completions of the following diagram
\[
\begin{tikzcd}
S^{s,t} \arrow[r, dashed]\arrow{d}{\alpha} & \Sigma^{-2N+1,-N} E \vee \Sigma^{-2N+2,-N+1}E \arrow{d}\\
E^{tC_2} \arrow{r}& \Sigma^{1,0} E \wedge \underline{L}^\infty_{-N} 
\end{tikzcd}
\]
where $N>0$ is minimal so that the left-hand composition is nontrivial. If the composition of the dashed arrow with the projection onto the more highly suspended wedge summand is nontrivial, we define $M_E(\alpha)$ to be the coset of completions composed with the projection onto this summand. Otherwise, as in Definition ~\ref{mmidef}, we define $M_E(\alpha)$ to be the coset of completions composed with the projection onto the less highly suspended wedge summand.

We will often use splittings $\pi_{**}(E^{tC_2}) \cong \bigoplus_{i \in \z}\pi_{**}(\Sigma^{si,ti}R)$. An element $x \in \pi_{**}(R)$ defines an element $\alpha \in \pi_{**}(E^{tC_2})$ by mapping into the summand with $i=0$, and we will use the notation $M_E(x)$ instead of $M_E(\alpha)$ in this situation.
\end{defin}

Note that if $E=S$, this recovers the motivic Mahowald invariant. The following proposition is a motivic analog of Theorem 2.15 of ~\cite{MR93}. It will allow us to ``lift" motivic $E$-Mahowald invariant computations to motivic Mahowald invariant computations.

\begin{prop}\label{lift}
Let $E$ be a motivic spectrum with a map $S^{0,0} \to E$. If $M_E(x)$ is a coset in $\pi_{**}(E)$ containing a class in the image of a class $y \in \pi_{**}(S^{0,0})$ such that the following diagram commutes,
\[
\begin{tikzcd}
S^{s,t} \arrow{r}{y} \arrow{d}{x} & S^{-2N+1,-N} \vee S^{-2N + 2, -N+1} ,\arrow{d}{i}\\
S^{0,0} \arrow{r}{h} & \Sigma^{1,0} \underline{L}^\infty_{-N},
\end{tikzcd}
\]
then $y \in M(x)$. 
\end{prop}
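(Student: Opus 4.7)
The plan is to verify the three requirements of Definition \ref{mmidef} for $y$ to represent $M(x)$: that $N$ witnesses nontriviality at the $S$-level, that $N$ is minimal for $M(x)$, and that $y$ is a valid lift to the fiber of the collapse map. The unit $\mu \colon S^{0,0} \to E$ will transport data from the $E$-Mahowald setup back to the $S$-Mahowald setup.

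First I would dispatch the lifting condition, which is essentially handed to us. The map $i$ in the hypothesis is, by construction, the inclusion from the fiber of the collapse $\Sigma^{1,0}\underline{L}^\infty_{-N} \to \Sigma^{1,0}\underline{L}^\infty_{-N+1}$. Thus the commutativity $i \circ y = h \circ x$, where $h \colon S^{0,0} \simeq \Sigma^{1,0}\underline{L}^\infty_{-\infty} \to \Sigma^{1,0}\underline{L}^\infty_{-N}$ is the projection from the inverse limit, is exactly the assertion that $y$ is a dashed lift of the composition $h \circ x$ through the fiber sequence in Definition \ref{mmidef}.

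Next I would establish nontriviality of $h \circ x$. Smashing $\mu$ onto the target of the commutative square produces the defining square for $M_E(x)$ at level $N$, whose nontriviality is given (since $M_E(x)$ is a nonzero coset containing the image of $y$). Because smashing with $\mu$ is functorial, it cannot make a zero map nonzero, and therefore the $S$-level composition $h \circ x$ was already nonzero. This shows $N \geq N_S$, where $N_S$ is the minimal level for $M(x)$.

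The main obstacle is the other inequality, $N_S \geq N$. For this I would argue that if $h_{N'} \circ x$ were nontrivial for some $N' < N$, there would exist a compatible lift $z \colon S^{s,t} \to S^{-2N'+1,-N'} \vee S^{-2N'+2,-N'+1}$ whose composition with $i_{N'}$ equals $h_{N'} \circ x$. Chasing the diagram of fiber sequences given by the $\underline{L}^\infty_{-k}$ tower, and applying $\mu$, this would provide a nonzero $E$-level lift at $N'$, contradicting the minimality of $N$ for $M_E(x)$. The delicate point is that a priori $\mu$ could kill the $S$-level lift; I would handle this by using that $y$ itself lies in $\pi_{**}(S^{0,0})$ and represents a class not in the image of attaching maps at level $N$ (as reflected by the nonvanishing of its image in $E$), so that any refined lift at level $N'$ would yield a refined lift of $y$ in $\pi_{**}(S^{0,0})$, and applying $\mu$ to the resulting compatible lifts produces the desired contradiction to $N = N_E$. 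In the intended applications, where $E = kq$ or $E = wko$ and the target class lies in the Hurewicz image, this faithfulness of $\mu$ on the relevant bidegrees is automatic, so the minimality step reduces to a cell-by-cell comparison of Atiyah-Hirzebruch filtrations in the two settings.
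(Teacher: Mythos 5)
Your decomposition into three tasks — verify $y$ is a lift, show $h\circ x$ is essential (giving $N \geq N_S$), and show $N_S \geq N$ — is the right frame, and your first two steps are correct and match the paper: the commutativity hands you the lift, and the unit $\mu\colon S^{0,0}\to E$ cannot make a null map essential, so essentiality of the $E$-smashed composite forces essentiality of $h\circ x$. This is exactly the diagram chase the paper performs.

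The gap is in your third step. You call $N_S\geq N$ "the main obstacle" and try to push a hypothetical lift at some smaller level $N'<N$ through $\mu$ to contradict minimality of $N$ for $M_E(x)$ — then you yourself flag that $\mu$ could kill that lift, and fall back on Hurewicz-image considerations in the applications. That argument does not work in general, and no such retreat is needed. The point you are missing is that the commutativity hypothesis already hands you the inequality. Let $c_N\colon \Sigma^{1,0}\underline{L}^\infty_{-N}\to \Sigma^{1,0}\underline{L}^\infty_{-N+1}$ be the collapse, so $i$ is the inclusion of the fiber of $c_N$ and $c_N\circ i = 0$, and $h_{N-1} = c_N\circ h_N$ compatibly with the inverse-limit structure. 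Then the commutative square gives $h_N\circ x = i\circ y$, hence $h_{N-1}\circ x = c_N\circ h_N\circ x = c_N\circ i\circ y = 0$. So the composition is automatically null one level earlier, which forces $N_S \geq N$ with no further input. You already identified $i$ as the fiber inclusion in your first paragraph, so you had every ingredient in hand; the minimality is not a separate delicate step but an immediate consequence of the same commutativity you used for the lifting condition. This is what the paper's terse "and $y\in M(x)$ by commutativity of the diagram in the hypothesis" is doing.
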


\begin{proof}
Consider the commutative diagram
\[
\begin{tikzcd}
& & \Sigma^{1,0} \underline{L}^\infty_{-N+1} \arrow{r} & \Sigma^{1,0} \underline{L}^\infty_{1-N} \wedge E \\
S^{s,t} \arrow{r}{x} & S^{0,0} \arrow{r}{h} & \Sigma^{1,0} \underline{L}^\infty_{-N} \arrow{u} \arrow{r}{\eta} & \Sigma^{1,0} \underline{L}^\infty_{-N} \wedge E \arrow{u} \\
& & S^{0,0} \arrow{u} \arrow{r} & \Sigma^{-2N+1,-N} E \vee \Sigma^{-2N+2,-N} E \arrow{u}
\end{tikzcd}
\]
If $\eta \circ h \circ x$ is essential, then $h \circ x$ is essential and $y \in M(x)$ by commutativity of the diagram in the hypothesis.
\end{proof}

The motivic Mahowald invariant is calculated by determining the exact cell of $\underline{L}^\infty_{-\infty}$ where a class $\alpha \in \pi_{**}(S^{0,0})$ is detected. We will occasionally want a ``coarser" invariant which is calculated by determining on which collection of adjacent cells a class is detected. 

\begin{defin}\label{mibx}
Let $\alpha \in \pi_{s,t}(S^{0,0})$ and let $X = \underline{L}^{k}_{k-j}$ for some $k,j$ such that  $k\geq j$. Note that $X$ is a $(j+1)$-cell complex. Suppose that $\underline{L}^k_{k-j} \simeq \Sigma^{-2n(j+1),-n(j+1)} \underline{L}^{(j+1)n+k}_{(j+1)n+k-j}$ for all $n \in \z$. The \emph{motivic Mahowald invariant based on $X$} is the coset of completions of the following diagram
\[
\begin{tikzcd}
S^{s,t} \arrow[r, dashed]\arrow{d}{\alpha} & \Sigma^{1,0} \underline{L}^{k-N(j+1)}_{k-j-N(j+1)} \simeq  \Sigma^{-2N(j+1)+1,-N(j+1)}\underline{L}^k_{k-j} \arrow{dd}  \\
S^{0,0} \arrow{d}{\simeq}  \\
\underset{\underset{n}{\longleftarrow}}{\lim} \Sigma^{1,0} \underline{L}^\infty_{k-j-n(j+1)} \arrow{r}& \Sigma^{1,0} \underline{L}^\infty_{k-j-N(j+1)} 
\end{tikzcd}
\]
where $N>0$ is minimal such that the left-hand composition is nontrivial. The motivic Mahowald invariant based on $X$ of $\alpha$ will be denoted $M(\alpha;X)$.
\end{defin}

The condition $\underline{L}^k_{k-j} \simeq \Sigma^{-2n(j+1),-n(j+1)} \underline{L}^{(j+1)n+k}_{(j+1)n+k-j}$ for all $n \in \z$ says that  $\underline{L}^\infty_{-\infty}$ is ``built from" suspensions of $X$. We illustrate this in the following two examples which will be used in the sequel.

\begin{exm}
Let $k=0$, $j = 1$, and let $X = \underline{L}^0_{-1}$. Then $X$ is equivalent to the $(-1,0)$-desuspension of the mod two Moore spectrum $V(0)$, and by examination of the attaching map structure in $\underline{L}^\infty_{-\infty}$ we have
$$X = \underline{L}^0_{-1}  \simeq \Sigma^{-1,0} V(0) \simeq   \Sigma^{-4n,-2n} \Sigma^{4n-1,2n} V(0) \simeq \Sigma^{-4n,-2n} \underline{L}^{2n}_{2n-1}$$
for each $n \in \z$. In particular, we can define the motivic Mahowald invariant based on (a desuspension of) the mod two Moore spectrum.
\end{exm}

\begin{exm}
Let $k=1$, $j =3$, and let $X = \underline{L}^1_{-2}$. Then $X$ is a $4$-cell complex with motivic cohomology depicted below:

\hskip2in \includegraphics[scale=.2]{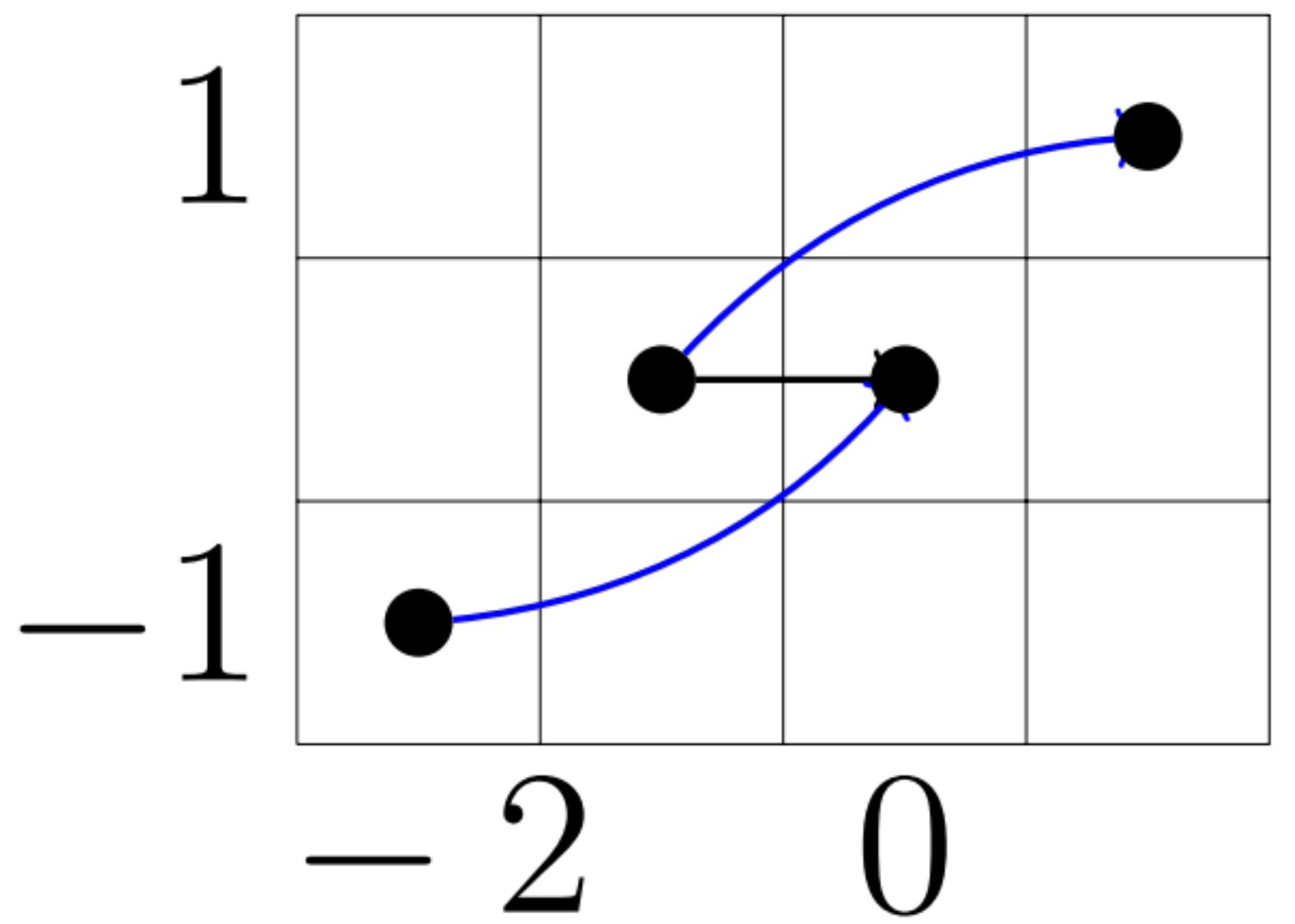}

Examination of Figure ~\ref{lcd} shows that $\underline{L}^\infty_{-\infty}$ can be built from suspensions of $X$. Indeed, we have
$$X = \underline{L}^1_{-2} \simeq \Sigma^{-8n,-4n} \underline{L}^{4n+1}_{4n-2}$$
for each $n \in \z$ since the only attaching maps in $\underline{L}^{4n+1}_{4n-2}$ are the $\cdot 2$- and $\eta$-attaching maps detected by the action of $Sq^1$ and $Sq^2$ on $H^{**}(\underline{L}^{4n+1}_{4n-2})$. The only other possible attaching map is an $\eta^2$-attaching map from the top cell to the bottom cell, but this is not possible since $\eta^2$ is divisible by $\eta$. 
\end{exm}

We can combine the previous two definitions to obtain another approximation.

\begin{defin}\label{miebx} Let $\alpha \in \pi_{s,t}(E^{tC_2})$ and let $X = \underline{L}^k_{k-j}$ as above. The \emph{motivic $E$-Mahowald invariant based on $X$} of $\alpha$ is the coset of completions of the following diagram
\[
\begin{tikzcd}
S^{s,t} \arrow[r, dashed]\arrow{d}{\alpha} & \Sigma^{1,0} \underline{L}^{k-N(j+1)}_{k-j-N(j+1)} \wedge E \simeq  \Sigma^{-2N(j+1)+1,-N(j+1)}\underline{L}^k_{k-j} \wedge E \arrow{dd}  \\
E^{tC_2} \arrow{d}{\simeq}  \\
\underset{\underset{n}{\longleftarrow}}{\lim} \Sigma^{1,0} \underline{L}^\infty_{k-j-n(j+1)} \wedge E \arrow{r}& \Sigma^{1,0} \underline{L}^\infty_{k-j-N(j+1)} \wedge E
\end{tikzcd}
\]
where $N>0$ is minimal such that the left-hand composition is nontrivial. The motivic $E$-Mahowald invariant based on $X$ of $\alpha$ will be denoted $M_E(\alpha;X)$. 
\end{defin}

\begin{rem2}
The approximations defined above are the motivic analogs of ones defined in ~\cite{MR93}. For example, they compute the approximation $M^{cl}_{ko}(2^i; V(0))$ in order to show that $M^{cl}(2^i)$ consists of certain $v_1$-periodic families in the image of $J$. 
\end{rem2}

\section{$C_2$-Tate constructions of some motivic ring spectra}

In order to compute the motivic $E$-Mahowald invariant, we need to understand the $C_2$-Tate constructions of certain motivic ring spectra. The classical result for which we need motivic analogs is the following:

\begin{thm} ~\cite{DM84}
There is an equivalence of spectra
$$ko^{tC_2} \simeq \bigvee_{i \in \z} \Sigma^{4i} H\z.$$
\end{thm}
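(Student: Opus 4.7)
The plan is to run the inverse-limit Adams spectral sequence for $ko^{tC_2}$, identify its $E_\infty$-page with that of $\bigvee_{i\in\z}\Sigma^{4i} H\z$, and upgrade the resulting homotopy-group isomorphism to an equivalence of spectra. The starting point is Stong's identification $H^*(ko;\f_2)\cong A//A(1)$ together with the classical analog of Gregersen's Proposition, giving $H^*_c(\Sigma RP^\infty_{-\infty}) \cong R_+(\f_2)$ for the classical Singer construction. The K\"unneth formula then yields
$$H^*_c(ko^{tC_2}) \;\cong\; A//A(1) \otimes R_+(\f_2)$$
as a module over the Steenrod algebra $A$.

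The $E_2$-page of the inverse-limit Adams spectral sequence is therefore
$$E_2 \;\cong\; \Ext_A\!\left(A//A(1) \otimes R_+(\f_2),\,\f_2\right) \;\cong\; \Ext_{A(1)}\!\left(R_+(\f_2)|_{A(1)},\,\f_2\right),$$
using the shearing isomorphism $A//A(1)\otimes M \cong A\otimes_{A(1)} M$ and change of rings. Direct analysis of the right-hand side, exploiting the Singer filtration together with the lightning-flash structure of $\Ext_{A(1)}(\f_2,\f_2)$ and the action of its $v_1$-periodicity generator, simplifies this $\Ext$ to $\bigoplus_{i\in\z}\Sigma^{4i}\f_2[h_0]$. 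This is precisely the $E_2$-page of the Adams spectral sequence for $\bigvee_{i\in\z}\Sigma^{4i}H\z$. Since the $E_2$-page consists entirely of $h_0$-towers supported in even total degrees, all Adams differentials vanish for degree reasons, and the only hidden extensions are the $h_0$-extensions assembling each tower into a copy of $\z$ concentrated in degree $4i$. Consequently $\pi_{*} ko^{tC_2}$ agrees with $\pi_{*} \bigvee_i \Sigma^{4i}H\z$ after $2$-completion.

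To upgrade this numerical agreement to an equivalence of spectra, I would construct maps $\Sigma^{4i}H\z \to ko^{tC_2}$ realizing the bottom class of each $h_0$-tower: the Bott element $\beta \in \pi_8 ko$ together with the class of the Hopf line bundle $L-1 \in ko^0(RP^\infty_{-\infty})$, which becomes a unit in the Tate construction, supplies generators in every degree $4i$ for $i\in\z$, and a wedge of such maps induces an isomorphism on mod-$2$ cohomology and hence a $2$-complete equivalence. The main obstacle is this final step: one must exhibit classes in $\pi_{4i} ko^{tC_2}$ detected by the $h_0$-tower generators, check that each factors through $H\z$ (i.e.\ is infinitely $h_0$-divisible and not just $2^n$-divisible for large $n$), and then verify that the assembled wedge realizes the Singer-construction identification of $A$-modules on cohomology rather than merely matching homotopy groups abstractly.
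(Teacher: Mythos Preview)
The paper does not prove this theorem; it is quoted from Davis--Mahowald~\cite{DM84} as background. What the paper does prove is the motivic analog (Proposition~\ref{buh}), and only at the level of homotopy groups: the paper explicitly remarks that ``to compute the (motivic) $E$-Mahowald invariant, it suffices to produce this splitting at the level of homotopy groups.'' So there is no proof in the paper to compare your argument against directly.

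That said, your Adams spectral sequence argument is essentially the same as the second half of the paper's proof of Proposition~\ref{buh}: change of rings from $A$ to $A(1)$, then a filtration of $H^*_c(\Sigma RP^\infty_{-\infty})$ as an $A(1)$-module with associated graded a sum of shifts of $A(1)//A(0)$, giving an $E_2$-page of the form $\bigoplus_i \Sigma^{4i}\Ext_{A(0)}(\f_2,\f_2)$ which collapses for degree reasons. The paper additionally runs the Atiyah--Hirzebruch spectral sequence first and uses the Adams computation only to solve extensions, whereas you go straight to Adams; your route is tidier and loses nothing.

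Where your proposal goes beyond the paper is the final step, upgrading the $\pi_*$-isomorphism to an equivalence of spectra. You correctly flag this as the main obstacle, and your sketch is honest but incomplete: producing classes in $\pi_{4i}(ko^{tC_2})$ is not hard, but showing that each extends over $H\z$ (equivalently, that the corresponding $ko$-module map $\Sigma^{4i}ko \to ko^{tC_2}$ factors through $ko \to H\z$) requires an argument you have not supplied. This factorization, or an equivalent construction of a map of $A$-modules $H^*_c(ko^{tC_2}) \to \bigoplus_i \Sigma^{4i} A//A(0)$ realizing the splitting, is the substantive content of the Davis--Mahowald result beyond the homotopy-group calculation; since the paper neither needs nor proves this, your proposal is already at parity with what the paper establishes.
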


This splitting is used in ~\cite{MR93} to compute $M^{cl}(2^i)$. To compute the (motivic) $E$-Mahowald invariant, it suffices to produce this splitting at the level of homotopy groups. Before proceeding, we make the following convention to simplify notation.

\begin{conv}
In the sequel, we will write $\eta, \nu, \sigma \in \pi_{**}(S^{0,0})$ to denote the motivic Hopf invariant one elements of ~\cite{DI13} instead of $\eta_{alg}, \nu_{alg}, \sigma_{alg}$. 
\end{conv}

\subsection{Splitting needed for computing $M(2^i)$} In the classical setting, Mahowald and Ravenel use the $ko$-Mahowald invariant to compute $M^{cl}(2^i)$. We will use the motivic analog of $ko$, connective Hermitian K-theory $kq$, to compute $M(2^i)$. The properties of this spectrum we need are due to Isaksen-Shkembi.

\begin{lem} ~\cite{IS11} The motivic cohomology of $kq$ is given by
$$H^{**}(kq) \cong A//A(1)$$
where $A(1)$ is the subalgebra of the motivic Steenrod algebra generated by $Sq^1$ and $Sq^2$. 

The motivic homotopy groups of $kq$ are given by
$$\pi_{**}(kq) \cong \z_2[\tau,\eta,\alpha,\beta]/(2\eta,\tau \eta^3, \eta \alpha, \alpha^2 -4\beta)$$
with $|\tau| = (0,-1)$, $|\eta| = (1,1)$, $|\alpha| = (4,2)$, and $|\beta| = (8,4)$. 
\end{lem}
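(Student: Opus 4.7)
The plan is to establish the two parts in sequence, handling the cohomology statement first and then feeding it into the motivic Adams spectral sequence to obtain the homotopy groups.

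For the cohomology computation $H^{**}(kq) \cong A//A(1)$, I would start from Isaksen--Shkembi's construction of $kq$ as the motivic analog of connective Hermitian $K$-theory, which comes equipped with a unit map $S^{0,0} \to kq$ and a natural $H$-module structure detecting the bottom cell. The strategy is to exhibit a map of $A$-modules $A//A(1) \to H^{**}(kq)$ by choosing a generator in bidegree $(0,0)$ and showing that $Sq^1$ and $Sq^2$ act as zero on it; this follows from low-dimensional facts about the cellular structure of $kq$ (it has no cells in bidegrees $(1,1)$ or $(2,1)$). To see that this map is an isomorphism, I would compare Poincar\'e series, using Bott periodicity of $kq$ to reduce to a finite check, or alternatively realize $kq$ as the inverse limit of a Postnikov-type tower of motivic Eilenberg-MacLane spectra built from $A//A(1)$.

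Next, with the cohomology in hand, I would set up the motivic Adams spectral sequence
\[
E_2^{***} = \operatorname{Ext}^{***}_{A}(H^{**}(kq), \m_2) \Rightarrow \pi_{**}(kq)^\wedge_2.
\]
Applying the motivic change-of-rings isomorphism converts the $E_2$-page to $\operatorname{Ext}^{***}_{A(1)}(\m_2, \m_2)$, which is computed in \cite{IS11} by a minimal resolution over the motivic $A(1)$. The resulting chart produces exactly the generators $\tau, \eta, \alpha, \beta$ in the claimed bidegrees together with the relations $2\eta = 0$, $\tau \eta^3 = 0$, and $\eta \alpha = 0$ visible at $E_2$.

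Finally, I would argue that the spectral sequence collapses and identify the hidden extension $\alpha^2 = 4\beta$. Collapse follows by comparison with the classical Adams spectral sequence for $ko$ via Betti realization: every possible differential either maps to a class which realizes to a non-differential in the classical Adams spectral sequence for $ko$ (which is known to collapse), or else has source/target which is $\tau$-torsion and can be ruled out by a direct count of generators against the known rational homotopy of $kq$. The main obstacle is the hidden multiplicative extension $\alpha^2 = 4\beta$, which is invisible at $E_\infty$; I would handle this by transporting Wood's classical relation through Betti realization and then lifting uniquely using that the relevant bidegree is $\tau$-torsion-free, or, more directly, by exploiting the $E_\infty$-ring structure on $kq$ together with the identification $\alpha = \operatorname{Re}(\beta)$ coming from Hermitian $K$-theory.
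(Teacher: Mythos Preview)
The paper does not give a proof of this lemma at all: it is quoted directly from Isaksen--Shkembi \cite{IS11} as a black box, with no argument supplied. So there is nothing in the paper to compare your proposal against.

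That said, your outline is broadly the right shape and matches what \cite{IS11} actually does for the second half: one identifies $H^{**}(kq)\cong A//A(1)$, applies change-of-rings in the motivic Adams spectral sequence to reduce to $\operatorname{Ext}_{A(1)}(\m_2,\m_2)$, computes this explicitly, and checks collapse plus the hidden extension $\alpha^2=4\beta$ (the latter via comparison with the classical $ko$ computation under Betti realization). One caution on your first step: the argument you sketch for $H^{**}(kq)\cong A//A(1)$ is close to circular. Comparing Poincar\'e series requires already knowing the cell structure of $kq$, which is essentially equivalent to knowing its homotopy; and ``no cells in bidegrees $(1,1)$ or $(2,1)$'' only gives vanishing of $Sq^1,Sq^2$ on the bottom class, not injectivity or surjectivity of $A//A(1)\to H^{**}(kq)$. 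In \cite{IS11} this issue is dissolved because $kq$ is \emph{constructed} so as to have cohomology $A//A(1)$ (via an iterated cofiber/Postnikov-style procedure), so the cohomology identification is part of the definition rather than a separate theorem to be proved afterward. If you want a self-contained argument, that construction is the honest route; the Poincar\'e-series shortcut does not stand on its own.
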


\begin{prop}~\label{buh}
There is an isomorphism in homotopy groups
$$\pi_{**}(kq^{tC_2}) \simeq \lim_{\underset{n}{\longleftarrow}} \bigoplus_{i \geq -n} \pi_{**}(\Sigma^{4i,2i} H\z_2)$$
\end{prop}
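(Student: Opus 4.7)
The plan is to compute $\pi_{**}(kq \wedge \underline{L}^\infty_{-k})$ for each fixed $k$ via the motivic Adams spectral sequence (MASS), and then to pass to the inverse limit in $k$. The central input is an $A(1)$-module decomposition of $H^{**}(\underline{L}^\infty_{-k})$.

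\textbf{Step 1: $A(1)$-module decomposition.} The first step is to decompose $H^{**}(\underline{L}^\infty_{-k}) = \Sigma^{-2k,-k}\m_2[u,v]/(u^2+\tau v)$ as an $A(1)$-module. Using Voevodsky's formulas (in particular $Sq^1(uv^j) = v^{j+1}$, $Sq^2(v^{2j+1}) = v^{2j+2}$, $Sq^2(uv^{2j+1}) = uv^{2j+2}$, with $Sq^1$ and $Sq^2$ vanishing on the remaining $u^\epsilon v^j$), I would organize the $\m_2$-basis into consecutive four-element ``lightning flashes,'' each one isomorphic (after a bidegree shift $(4i,2i)$) to $A(1)//A(0)$. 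The result would be an $A(1)$-module isomorphism
$$H^{**}(\underline{L}^\infty_{-k}) \cong \bigoplus_{i \geq i_0(k)} \Sigma^{4i,2i}\, A(1)//A(0)$$
for an appropriate lower bound $i_0(k)$ depending on $k$.

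\textbf{Step 2: MASS $E_2$-page.} Since $H^{**}(kq) \cong A//A(1)$, the MASS for $kq \wedge \underline{L}^\infty_{-k}$ has $E_2$-page
$$\Ext^{**,**}_A(H^{**}(kq)\otimes H^{**}(\underline{L}^\infty_{-k}), \m_2) \cong \Ext^{**,**}_{A(1)}(H^{**}(\underline{L}^\infty_{-k}), \m_2)$$
by change of rings. Plugging in the Step 1 decomposition and applying the further change of rings $\Ext_{A(1)}(A(1)//A(0),\m_2) \cong \Ext_{A(0)}(\m_2,\m_2)$, the $E_2$-page becomes $\bigoplus_{i \geq i_0(k)} \Sigma^{4i,2i}\Ext^{**,**}_{A(0)}(\m_2,\m_2)$, a direct sum of shifted copies of the MASS $E_2$-page for $H\z_2$.

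\textbf{Step 3: Collapse and extensions.} The MASS for $H\z_2$ collapses with no hidden extensions, so each summand contributes the desired $\pi_{**}(\Sigma^{4i,2i}H\z_2)$. I would verify that Adams differentials and hidden extensions respect the decomposition (i.e.\ do not mix different lightning flashes) by cross-comparison with the cellular Atiyah--Hirzebruch spectral sequence for $\pi_{**}(kq \wedge \underline{L}^\infty_{-k})$ (per the outline of Section 3), and via Betti realization to the classical Davis--Mahowald splitting of $ko^{tC_2}$.

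\textbf{Step 4: Inverse limit.} The decompositions are compatible with the structure maps $\underline{L}^\infty_{-k} \to \underline{L}^\infty_{-k+1}$ of Lemma~2.6 (which act as the identity on shared cells). Taking the inverse limit in $k$ then yields
$$\pi_{**}(kq^{tC_2}) = \lim_{\underset{k}{\longleftarrow}} \pi_{**}(\Sigma^{1,0}kq \wedge \underline{L}^\infty_{-k}) \cong \lim_{\underset{n}{\longleftarrow}} \bigoplus_{i \geq -n} \pi_{**}(\Sigma^{4i,2i}H\z_2).$$

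The main obstacle will be Step 1. The bookkeeping of bidegrees is delicate because of the relation $u^2 = \tau v$ and because $\tau$ factors appear in motivic Adem relations; one must also rule out any ``cross-flash'' Steenrod action that would obstruct the clean direct-sum splitting. A secondary concern is ruling out hidden Adams extensions in Step 3, since the claim is only a $\pi_{**}$-isomorphism (not a spectrum-level splitting), yet the summand structure must still survive passage from $E_\infty$ to $\pi_{**}$.
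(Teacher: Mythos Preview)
Your Step 1 fails: $H^{**}(\underline{L}^\infty_{-k})$ does \emph{not} split as a direct sum of suspended copies of $A(1)//A(0)$. The ``cross-flash'' Steenrod action you flag as something to rule out is genuinely present. With your own formulas, $Sq^2(v^{2j+1}) = v^{2j+2}$, $Sq^1(uv^{2j+1}) = v^{2j+2}$, and $Sq^2(uv^{2j+1}) = uv^{2j+2}$ all carry the top of one consecutive four-cell block into the next, so these blocks are not $A(1)$-submodules. More basically, the bidegrees cannot match: a consecutive four-cell block sits in relative bidegrees $(0,0),(1,1),(2,1),(3,2)$, while motivic $A(1)//A(0)$ has cells in $(0,0),(2,1),(3,1),(5,2)$. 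So no choice of four-cell grouping can realize your claimed isomorphism.

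The paper repairs this by replacing the splitting with a \emph{filtration} of $H^{**}_c(\Sigma^{1,0}\underline{L}^\infty_{-\infty})$ whose associated graded is $\bigoplus_i \Sigma^{4i,2i} A(1)//A(0)$; this produces an algebraic spectral sequence converging to $\Ext_{A(1)}$ which collapses because its differentials change tridegree by $(-1,-1,0)$ and there are no possible targets. The paper in fact runs the Atiyah--Hirzebruch spectral sequence as the primary tool (reading off $d^1$, $d^2$, $d^3$ from the $Sq^1$, $Sq^2$ attaching maps) and invokes the Adams-side filtration argument only to resolve the hidden multiplication-by-$2$ extensions. Your Steps 2--4 (change of rings to $\Ext_{A(1)}$, then to $\Ext_{A(0)}$, collapse, inverse limit) are compatible with the paper's argument once Step 1 is replaced by the filtration spectral sequence; the substance is the same, but you must argue via a collapsing spectral sequence rather than a module splitting that does not exist.
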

\begin{proof}
Consider the Atiyah-Hirzebruch spectral sequence resulting from the cellular filtration of $\Sigma^{1,0} \underline{L}^\infty_{-\infty}$. This spectral sequence has the form
$$E^1_{s,t,u} = kq_{t,u}(S^{s,\lfloor s/2\rfloor}) = \pi_{t-s,u-\lfloor s/2 \rfloor}(kq) \Rightarrow \pi_{t,u} (kq^{tC_2}).$$
This spectral sequence is depicted in Figure ~\ref{ahkq} below. 

\begin{fig}\label{ahkq}
The Atiyah-Hirzebruch spectral sequence for $-9 \leq s \leq 1$ and $-9 \leq t \leq 9$ with some of the differentials drawn in.

The differentials are periodic and can be propogated downwards by examining the $A$-module structure of $H^{**}(\Sigma^{1,0} \underline{L}^2_{-9})$. A $\square$ represents $\z_{(2)}[\tau]$, a $\bullet$ represents $\f_2[\tau]$, and a red $\bullet$ represents $\f_2$. Differentials are blue and $\tau$-linear; a dashed differential means the source, target, or both are $\tau$-torsion. Green lines indicate hidden extensions. 

\hskip.5in \includegraphics[scale=.7]{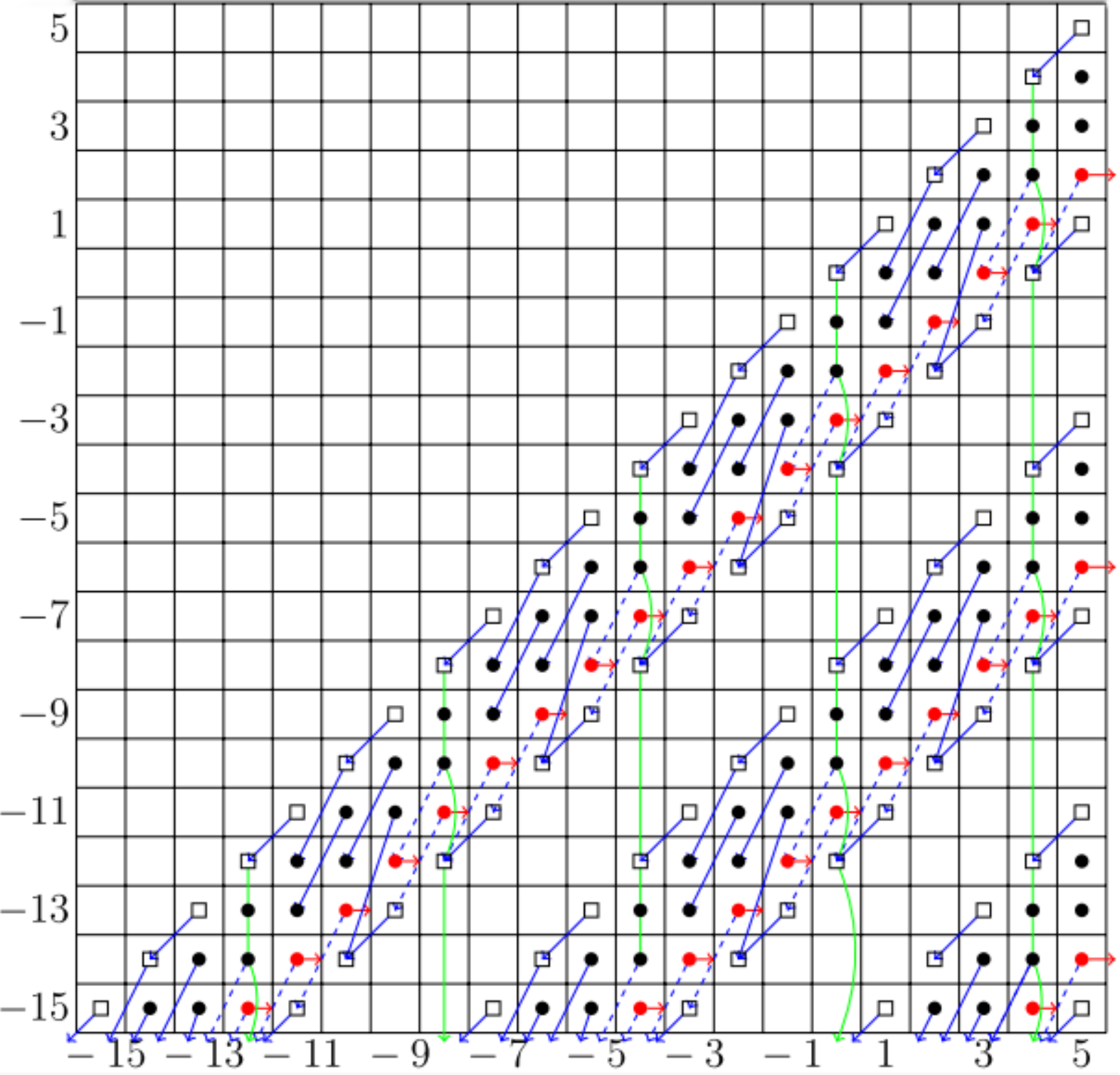}

\end{fig}

Note that $\eta$ is not nilpotent in $\pi_{**}(kq)$, so in each $s$-degree, an additional $\eta$-tower appears every $8$ $(t+s)$-degrees. The first two powers of $\eta$ always support a $\tau$-tower and the remaining elements of the tower are copies of $\z/2$. Further, the motivic weight of each copy of $kq_{**}$ changes by $1$ every $2$ $s$-degrees. 

Differentials in this spectral sequence are induced by the attaching maps in $\underline{L}^\infty_{-\infty}$ detected by the generators of $A(1)$. There are $d^1$-differentials between all the copies of $\z_2[\tau]$ so the targets represent $\f_2[\tau]$ on the $E^2$-page. There are $3$ $d^2$-differentials which correspond to the classical case; unlike the classical case, there is an additional differential from the element corresponding to $\eta^2$ to the element corresponding to $\eta^3$ two $s$-degrees lower. This differential propogates through the entire $\eta$-tower, but since $\eta^3 \tau = 0$, it only annihilates one copy of $\z/2$ in the copy of $\f_2[\tau]$ contributed by $\eta^2$. With this copy of $\f_2$ eliminated, the $\tau$-towers on $\eta^2$ in $s=1$ and the $\tau$-tower on $\alpha$ in $s=-2$ begin in the same motivic weight. Therefore the $d^3$-differentials between the towers completely annihilate them, and only copies of $\f_2[\tau]$ in the abutment.

Comparison with the Adams spectral sequence allows us to determine the hidden multiplication by $2$ extensions. The $E_2$-page of the inverse limit motivic Adams spectral sequence ~\cite[Section 4.2]{Gre12} computing $\pi_{**}(kq \wedge \Sigma^{1,0} \underline{L}^\infty_{-\infty})$ has the form
$$E^{***}_2 = \lim_{\underset{n}{\longleftarrow}} Ext^{***}_A(A//A(1) \otimes H^{**}(\Sigma^{1,0}\underline{L}^\infty_{-n}), \m_2).$$
By change-of-rings, we can rewrite this $E_2$-term as
$$\lim_{\underset{n}{\longleftarrow}} Ext^{***}_{A(1)}(H^{**}(\Sigma^{1,0}\underline{L}^\infty_{-n}),\m_2).$$
Therefore we just need to understand the action of $A(1)$ on $H^{**}(\Sigma^{1,0}\underline{L}^\infty_{-n})$. Recall that $Sq^{2^i}$ acts nontrivially on an element $x \in H^{n,\lceil n/2 \rceil}(\underline{L}^\infty_{-k})$ if and only if the $(i-1)$-st digit in the diadic expansion for $n$ is $1$. Using this fact we observe the following:
\begin{enumerate}
\item $Sq^1$ acts nontrivially on all cells with odd topological dimensions since they have diadic expansion ending in $1$
\item $Sq^2$ acts nontrivially on all cells with topological dimension congruent to $2,3 \mod 4$. 
\end{enumerate}
We can define a filtration of $Ext^{***}_{A(1)}(H^{**}_c(\Sigma^{1,0} \underline{L}^\infty_{-\infty}),\m_2)$ with associated graded consisting of suspensions of $A(1)//A(0)$ as follows. This filtration arises from a filtration of $H^{**}_c(\underline{L}^\infty_{-\infty})$ defined by setting $F_n \subset H^{**}_c(\underline{L}^{\infty}_{-\infty})$ to be the complement of $H^{2n,*}_c(\Sigma^{1,0} \underline{L}^\infty_{-\infty})$ inside of $H^{\leq 2n+1,*}_c(\underline{L}^\infty_{-\infty})$, i.e.
$$F_n := H^{\leq 2n+1,*}_c(\Sigma^{1,0} \underline{L}^\infty_{-\infty}) \setminus H^{2n,*}_c(\Sigma^{1,0} \underline{L}^\infty_{-\infty}).$$ 
Then each bidegree $(i,j)$ of $H^{**}_c(\underline{L}^\infty_{-\infty})$ where both generators of $A(1)$ act trivially contributes a copy of $Ext_{A(0)}(\m_2,\m_2) \cong \pi_{**}H\z$ in the resulting spectral sequence:
$$\bigoplus_{i \in \z} \Sigma^{4i,2i} Ext^{***}_{A(0)}(\Sigma^{1,0} \m_2,\m_2) \Rightarrow Ext^{***}_{A(1)}(H^{**}(\Sigma^{1,0} \underline{L}^\infty_{-\infty}),\m_2).$$
The differentials in this spectral sequence change tridegree by $(-1,-1,0)$, so there is no room for differentials. The resulting inverse limit motivic Adams spectral sequence collapses for tridegree reasons, so we obtain the desired extensions. 
\end{proof}

The same proof with $E(Q_0,Q_1)$ in place of $A(1)$ and $\pi_{**}(BPGL\langle 1 \rangle) \cong \z_2[\tau,v_1]$ applies \emph{mutatis mutandis} to prove the following corollary.

\begin{cor}
There is an isomorphism in homotopy groups
$$\pi_{**}(BPGL\langle 1 \rangle^{tC_2}) \cong  \lim_{\underset{n}{\longleftarrow}} \bigoplus_{i \geq -n} \pi_{**}(\Sigma^{4i,2i}BPGL\langle 0 \rangle).$$
\end{cor}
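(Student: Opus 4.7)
The plan is to follow the template of Proposition \ref{buh} \emph{mutatis mutandis}, replacing $A(1)$ by the exterior subalgebra $E(Q_0,Q_1)$ of the motivic Steenrod algebra and replacing $kq$ by $BPGL\langle 1\rangle$. First I would set up the Atiyah-Hirzebruch spectral sequence arising from the cellular filtration of $\Sigma^{1,0}\underline{L}^\infty_{-\infty}$,
$$E^1_{s,t,u} = BPGL\langle 1\rangle_{t,u}(S^{s,\lfloor s/2\rfloor}) = \pi_{t-s,u-\lfloor s/2\rfloor}(BPGL\langle 1\rangle) \Rightarrow \pi_{t,u}(BPGL\langle 1\rangle^{tC_2}).$$
Since $\pi_{**}(BPGL\langle 1\rangle)\cong \z_2[\tau,v_1]$ with $|v_1|=(2,1)$ and in particular contains no $\eta$-towers, the differential analysis is significantly simpler than in the $kq$ case: there are no infinite $\eta$-families to cancel. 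The $d^1$-differentials are induced by $\cdot 2$-attaching maps detected by $Q_0=Sq^1$, and the next family of differentials is induced by $v_1$-attaching maps detected by $Q_1$, which take the form $d^r(x[k]) = v_1\, x[k-3]$ on the appropriate cells. All remaining nontrivial classes on the $E^\infty$-page contribute to copies of $\pi_{**}(BPGL\langle 0\rangle)\cong \z_2[\tau]$.

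Next I would run the inverse limit motivic Adams spectral sequence to pin down the additive structure and resolve any hidden extensions left by the Atiyah-Hirzebruch computation. By the change-of-rings isomorphism, the $E_2$-page takes the form
$$E_2 = \lim_{\underset{n}{\longleftarrow}} Ext^{***}_{E(Q_0,Q_1)}(H^{**}(\Sigma^{1,0}\underline{L}^\infty_{-n}),\m_2).$$
The key step is then the filtration argument: using the fact that $Q_0$ acts nontrivially exactly on the odd-dimensional cells of $H^{**}_c(\Sigma^{1,0}\underline{L}^\infty_{-\infty})$ and $Q_1$ acts nontrivially exactly on the cells whose topological dimension has a $1$ in the second dyadic digit, one identifies a sub-$E(Q_0,Q_1)$-module whose associated graded consists of $(4i,2i)$-suspensions of copies of $E(Q_0,Q_1)//E(Q_0) \cong E(Q_1)$. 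This yields a spectral sequence
$$\bigoplus_{i\in\z}\Sigma^{4i,2i}Ext^{***}_{E(Q_0)}(\Sigma^{1,0}\m_2,\m_2) \Rightarrow Ext^{***}_{E(Q_0,Q_1)}(H^{**}_c(\Sigma^{1,0}\underline{L}^\infty_{-\infty}),\m_2),$$
whose differentials shift tridegree by $(-1,-1,0)$ and therefore vanish for bidegree reasons. Since $Ext_{E(Q_0)}(\m_2,\m_2)$ coincides with the $E_2$-page of the motivic Adams spectral sequence for $BPGL\langle 0\rangle$, each summand converges to $\pi_{**}(\Sigma^{4i,2i}BPGL\langle 0\rangle)$.

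Finally, I would observe that the resulting inverse limit motivic Adams spectral sequence collapses at $E_2$ for the same tridegree reason, and that passing to the inverse limit over the finite skeleta $\underline{L}^\infty_{-k}$ produces the claimed formula. The main obstacle I anticipate is bookkeeping in the filtration step: verifying that the filtration of $H^{**}_c(\Sigma^{1,0}\underline{L}^\infty_{-\infty})$ by cells on which both $Q_0$ and $Q_1$ act trivially genuinely yields $(4i,2i)$-periodic copies of $E(Q_0,Q_1)//E(Q_0)$ (rather than a twisted version), and confirming that no hidden $\tau$- or $v_1$-extensions appear in the limit. Once this is checked, the proof concludes exactly as in Proposition \ref{buh}.
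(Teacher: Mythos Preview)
Your proposal is correct and follows exactly the approach the paper indicates: the paper's proof is simply ``the same proof with $E(Q_0,Q_1)$ in place of $A(1)$ and $\pi_{**}(BPGL\langle 1\rangle)\cong \z_2[\tau,v_1]$ applies \emph{mutatis mutandis},'' and you have faithfully spelled out what that entails. Your anticipated obstacles (checking the filtration quotients and ruling out hidden extensions) are exactly the points where the argument of Proposition~\ref{buh} has to be verified again, and they go through without change.
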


\begin{rem2}~\label{bptconj} It should be possible to apply the same proof using the Adams spectral sequence with $E(Q_0,\ldots,Q_n)$ in place of $A(1)$ and $\pi_{**}(BPGL\langle n \rangle) \cong \m_2[v_0,v_1,\ldots,v_n]$ in place of $\pi_{**}(kq)$ to produce an analogous splitting in homotopy groups for $BPGL\langle n  \rangle^{tC_2}$. In order to solve extensions, one can use induction on $n$ coupled with the induced maps of spectral sequences from the map $BPGL \langle n-1\rangle \to BPGL\langle n \rangle$. We conjecture that for any $n \geq 1$, there is an isomorphism in homotopy groups
$$\pi_{**}(BPGL\langle n \rangle^{tC_2}) \simeq \lim_{\underset{n}{\longleftarrow}} \bigoplus_{i \geq -n} \pi_{**}(\Sigma^{4i,2i}BPGL\langle n-1 \rangle).$$
This is a weak motivic analog of a conjecture of Davis-Mahowald ~\cite[Conjecture 1.6]{DM84}. 
\end{rem2}

\subsection{Splitting needed for computing $M(\eta^i)$} The material in this section will be needed to compute $M(\eta^i)$. We begin by recalling essential information about the category of modules over the cofiber of $\tau$ studied extensively in  ~\cite{Ghe17b}\cite{Ghe17}\cite{GWXPP}. The cofiber of the map $\tau \in \pi_{0,-1}(S^{0,0})$ is a $E_\infty$ motivic ring spectrum ~\cite{Ghe17} and therefore so is the $C\tau$-induced Eilenberg-Maclane spectrum $\overline{H} := H\f_2 \wedge C\tau$. The following proposition describes its cooperations and operations in the motivic stable homotopy category.

\begin{prop}~\cite[Propositions 5.4-5.5]{Ghe17}
The ring of cooperations of $\overline{H}$ is 
$$\pi_{**}(\overline{H} \wedge \overline{H}) \cong \f_2[\xi_1,\xi_2,\ldots,] \otimes E(\tau_0,\tau_1,\ldots) \otimes E(\beta_\tau)$$
where $\beta_\tau$ is the $\tau$-Bockstein with bidegree $(1,-1)$. The $\overline{H}$-Steenrod algebra is given by
$$\overline{H}^{**}(\overline{H}) \cong A/\tau \otimes E(\beta_\tau).$$
\end{prop}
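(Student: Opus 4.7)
The plan is to derive both statements by iterating the defining cofiber sequence
$$\Sigma^{0,-1}S \xrightarrow{\tau} S \to C\tau \to \Sigma^{1,-1}S$$
twice: first to reduce modulo $\tau$, second to introduce the exterior factor $E(\beta_\tau)$. The key inputs are Voevodsky's computations $\pi_{**}(H \wedge H) \cong \m_2[\xi_i] \otimes_{\m_2} E_{\m_2}(\tau_i)$ and $[H,H]^{**} \cong A$, both of which are free $\m_2$-modules, so the action of $\tau$ is injective on each.

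For the cooperations, I would first smash the cofiber sequence with $H \wedge H$. Injectivity of $\tau$ on $\pi_{**}(H \wedge H)$ collapses the resulting long exact sequence into an isomorphism
$$\pi_{**}(H \wedge H \wedge C\tau) \cong \pi_{**}(H \wedge H)/\tau \cong \f_2[\xi_i] \otimes E(\tau_i),$$
where the relations $\tau_i^2 = \tau \xi_{i+1}$ degenerate to $\tau_i^2 = 0$. Smashing the cofiber sequence a second time against this spectrum produces $\overline{H} \wedge \overline{H}$. Since $\tau$ now acts as zero on $\pi_{**}(H \wedge H \wedge C\tau)$, the long exact sequence splits into short exact sequences and contributes an extra summand $\Sigma^{1,-1} \pi_{**}(H \wedge H \wedge C\tau)$; its generator in bidegree $(1,-1)$, coming from the connecting map, is the Bockstein $\beta_\tau$.

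For the Steenrod operations, I would apply $[-,\overline{H}]^{**}$ in place of smashing. Applied to the cofiber sequence $\Sigma^{0,-1}H \xrightarrow{\tau} H \to \overline{H}$, the vanishing of $\tau$ on $\overline{H}$ yields a split short exact sequence
$$0 \to \Sigma^{-1,1}[H, \overline{H}]^{**} \to \overline{H}^{**}(\overline{H}) \to [H, \overline{H}]^{**} \to 0,$$
with a new generator in bidegree $(1,-1)$ again identified with the Bockstein $\beta_\tau$. Applying $[H,-]^{**}$ to the same cofiber sequence and using injectivity of $\tau$ on $[H,H]^{**} = A$ gives $[H,\overline{H}]^{**} \cong A/\tau$. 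Combining the two steps then yields $\overline{H}^{**}(\overline{H}) \cong A/\tau \otimes E(\beta_\tau)$.

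The main technical issue in each part is promoting the additive splittings to ring isomorphisms, in particular the exterior relation $\beta_\tau^2 = 0$. Fortunately this is forced by a motivic weight argument: $\beta_\tau^2$ would live in bidegree $(2,-2)$, but every generator of $\pi_{**}(H \wedge H)/\tau$ and of $A/\tau$ lies in nonnegative motivic weight, so the entire group $\pi_{2,-2}(\overline{H} \wedge \overline{H})$ (resp.\ $\overline{H}^{2,-2}(\overline{H})$) vanishes after accounting for the $(1,-1)$-shift in the extra summand. Compatibility with the coproduct on $\overline{H}$ and with composition of operations then follows from the naturality of the cofiber sequence arguments together with the $E_\infty$-structure on $C\tau$ constructed by Gheorghe.
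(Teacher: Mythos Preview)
The paper does not give its own proof of this statement; it is quoted without argument from \cite[Propositions 5.4--5.5]{Ghe17}. Your approach---iterating the defining cofiber sequence for $C\tau$ and using the $\m_2$-freeness of $\pi_{**}(H\wedge H)$ and of $A$ to collapse the resulting long exact sequences, then forcing $\beta_\tau^2=0$ by a weight count---is correct and is essentially the argument Gheorghe gives in the cited reference.
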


This proposition simplifies when working in the category of $C\tau$-modules. The \emph{$C\tau$-linear $\overline{H}$-homology} of a $C\tau$-module is defined by taking $\overline{H}$-homology in the category of $C\tau$-modules, i.e.
$$\overline{H}_{**}(X) = \pi_{**}(\overline{H} \wedge_{C\tau} X).$$
By expanding the right-hand side, we see that
$$\overline{H}_{**}(X) \cong \pi_{**}(H \wedge C\tau \wedge_{C\tau} X) \cong H_{**}(X).$$
It turns out that working with $C\tau$-linear setting simplifies certain computations. For example, the $C\tau$-linear cooperations of $\overline{H}$ do not contain a $\tau$-Bockstein.

\begin{prop}~\cite[Proposition 2.5]{Ghe17b} \label{ohba}
The $C\tau$-linear cooperations of $\overline{H}$ are given by the Hopf algebra
$$\overline{A}_* \cong \f_2[\xi_1,\xi_2,\ldots] \otimes E(\tau_0,\tau_1,\ldots)$$
with bidegrees given by $|\xi_n| = (2^{n+1}-2,2^n-1)$ and $|\tau_n| = (2^{n+1}-1,2^n-1)$, and copoduct
$$\Delta(\xi_n) = \sum^n_{i=0} \xi^{2^i}_{n-i} \otimes \xi_i,$$
$$\Delta(\tau_n) = \tau_n \otimes 1 + \sum^n_{i=0} \xi^{2^i}_{n-i} \otimes \tau_i.$$
\end{prop}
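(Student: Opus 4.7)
The plan is to derive this computation from Voevodsky's calculation of the full motivic dual Steenrod algebra together with the observation that $\overline{H}$ is a free $C\tau$-module of rank one on $H$. Recall from Voevodsky's work that over $\Spec(\c)$ the motivic dual Steenrod algebra is
\[
A_* = \pi_{**}(H \wedge H) \cong \m_2[\xi_1,\xi_2,\ldots] \otimes_{\m_2} E_{\m_2}(\tau_0,\tau_1,\ldots) \big/ (\tau_i^2 - \tau\xi_{i+1}),
\]
with bidegrees $|\xi_n| = (2^{n+1}-2,2^n-1)$ and $|\tau_n| = (2^{n+1}-1,2^n-1)$, and with the coproduct formulas displayed in the statement of the proposition (these already hold before reduction, since the $\rho$-contribution vanishes over $\c$).

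Next I would identify the object $\overline{H} \wedge_{C\tau} \overline{H}$ with a more familiar spectrum. Since $\overline{H} = H \wedge C\tau$ is the free $C\tau$-module generated by $H$, for any $C\tau$-module $M$ one has $\overline{H} \wedge_{C\tau} M \simeq H \wedge M$. Applying this with $M = \overline{H}$ gives the equivalence
\[
\overline{H} \wedge_{C\tau} \overline{H} \;\simeq\; H \wedge H \wedge C\tau,
\]
so computing $\overline{A}_*$ reduces to computing $\pi_{**}(H \wedge H \wedge C\tau)$.

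Now I would smash the defining cofiber sequence $S^{0,-1} \xrightarrow{\tau} S^{0,0} \to C\tau$ with $H \wedge H$ and pass to the long exact sequence in motivic homotopy. The explicit $\m_2$-basis for $A_*$ given by Voevodsky (monomials in the $\xi_i$'s times squarefree monomials in the $\tau_j$'s) shows that multiplication by $\tau$ is injective on $A_*$, so the long exact sequence collapses to a short exact sequence identifying $\pi_{**}(H \wedge H \wedge C\tau)$ with the quotient $A_*/\tau\cdot A_*$. Reducing the Voevodsky presentation modulo $\tau$ kills the right-hand side of $\tau_i^2 = \tau\xi_{i+1}$, so the exterior relations $\tau_i^2 = 0$ are now genuine relations, yielding
\[
\overline{A}_* \cong \f_2[\xi_1,\xi_2,\ldots] \otimes E(\tau_0,\tau_1,\ldots)
\]
as a graded $\f_2$-algebra, with the bidegrees inherited from $A_*$.

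Finally, for the coproduct I would observe that $\overline{A}_* = \overline{H}_{**}(\overline{H})$ carries the coproduct induced from the Hopf-algebroid structure on $A_*$ via the ring map $A_* \twoheadrightarrow A_*/\tau$. Since Voevodsky's coproduct formulas already have the shape displayed in the statement (no $\tau$-corrections appear over $\c$), they descend verbatim to $\overline{A}_*$. The main obstacle in this approach is justifying that the smash-over-$C\tau$ really does collapse to $H \wedge H \wedge C\tau$ and that $\tau$ acts injectively on $A_*$; both are essentially bookkeeping given Voevodsky's explicit basis, but care is needed with the $C\tau$-module conventions, since $C\tau$ is only an $E_\infty$-ring in the derived sense used in \cite{Ghe17}.
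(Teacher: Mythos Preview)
Your argument is correct and is the standard route to this computation. The paper, however, does not give its own proof of this proposition: it is simply quoted from Gheorghe's work \cite[Proposition 2.5]{Ghe17b}, so there is no in-paper argument to compare against. What you have written is essentially the expected proof (and is in spirit what Gheorghe does): use the free $C\tau$-module identification $\overline{H}\wedge_{C\tau}\overline{H}\simeq H\wedge H\wedge C\tau$, observe that $A_*$ is free over $\m_2=\f_2[\tau]$ so that $\tau$ acts injectively, and conclude $\overline{A}_*\cong A_*/\tau$ with the inherited coproduct. The only cosmetic point is that your displayed presentation of $A_*$ writes the exterior relations and then imposes $\tau_i^2=\tau\xi_{i+1}$ as a further quotient, which is slightly redundant notation; the content is fine.
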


\begin{rem2}
From now on, any time we use $\overline{H}$ or $\overline{A}$, we will be working in the $C\tau$-linear setting, i.e. the category of $C\tau$-modules. Note that any $C\tau$-module can be regarded as an $S^{0,0}$-module by composing with the inclusion of the bottom cell $S^{0,0} \to C\tau$.
\end{rem2}

Following ~\cite[Notation 2.11]{Ghe17b}, let $P_i$ be the dual of $\xi_{i}$, $i \geq 1$. Then $P_i$ is exterior and primitive in $\overline{A}$. In ~\cite{Ghe17b}, Gheorghe constructs an $E_\infty$ motivic ring spectrum $wBP$ satisfying
$$\overline{H}^{**}(wBP) \cong \overline{A}//E(P_1,P_2,\ldots)$$
which has homotopy groups given by
$$\pi_{**}(wBP) \cong \f_2[w_0,w_1,\ldots]$$
with $|w_i| = (2^{i+2}-3,2^{i+1}-1)$. 

The inclusion of the bottom cell $S^{0,0} \to C\tau$ induces a map $A \to \overline{A}$. Identifying classes in $\overline{A}$ with the images of the motivic Steenrod operations under this map, the $P_i$ can be defined inductively as the images of certain commutators ~\cite[Example 2.12]{Ghe17b}:
$$P_i = [Sq^{2^{i}},P_{i-1}].$$ 
In particular, $P_1 = Sq^2$ detects $\eta$ attaching maps in motivic cohomology and detects $w_0$ in the homotopy of $wBP$. 

In ~\cite{Ghe17b}, Gheorghe also constructs $E_\infty$ motivic ring spectra $wBP\langle n \rangle$ such that $\pi_{**}(wBP\langle n \rangle) \cong \f_2[w_0,\ldots,w_n]$. In order to compute $M(\eta^{4i})$, it suffices to produce a splitting of $\pi_{**}(wBP\langle 1 \rangle^{tC_2})$ as a wedge of suspensions of $\pi_{**}(wBP\langle 0 \rangle)$. However, we would like to compute $M(\eta^i)$ for all $i \geq 1$, so we must produce a new motivic spectrum. More precisely, we need a $C\tau$-module analog of classical $ko$ or motivic $kq$. To build this $C\tau$-module, we use the following result.

\begin{thm} ~\cite{GWXPP}
There is an equivalence of stable $\infty$-categories with $t$-structures
$$C\tau-mod^b_{cell} \to \cd^b(BP_*BP-comod)$$
whose restriction to the hearts is taking $BPGL$-homology. Here, $C\tau-mod^b_{cell}$ is the category of cellular module spectra over $C\tau$ whose $BPGL$-homology has bounded Chow degree, and $D^b(BP_*BP-comod)$ is the bounded derived category of the Abelian category of $p$-completed $BP_*BP$-comodules which are concentrated in even degrees.
\end{thm}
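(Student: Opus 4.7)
The plan is to define a functor $\Phi \colon C\tau\text{-mod}^b_{cell} \to \cd^b(BP_*BP\text{-comod})$ by $\Phi(M) := \pi_{**}(BPGL \wedge_{C\tau} M)$, with the $BP_*BP$-coaction induced by the right unit of $BPGL$ viewed as a $C\tau$-linear $E_\infty$-algebra (which makes sense because $C\tau$ itself is $E_\infty$ by Gheorghe's work). On the heart this functor will agree with the prescribed $BPGL$-homology. The first input I would verify is
\[
\pi_{**}(BPGL \wedge_{C\tau} BPGL) \cong BP_*BP,
\]
concentrated in even Chow degree, which one extracts from the known motivic computation of $\pi_{**}(BPGL \wedge BPGL)$ by base-changing along $S^{0,0} \to C\tau$ and observing that the extra motivic cooperations are $\tau$-divisible and hence annihilated.

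Next I would equip both sides with matching $t$-structures and prove that $\Phi$ restricts to an equivalence of hearts. On the $C\tau$-module side the natural choice is a version of Gheorghe's Chow $t$-structure, where $M$ is coconnective when $\pi_{**}(BPGL \wedge_{C\tau} M)$ is concentrated in non-positive Chow degree; the $p$-completeness and even-degree concentration on the target side are built into this by the cooperations computation above. The functor $\Phi$ is $t$-exact by design. Essential surjectivity on hearts proceeds by an Adams-type realization: given a comodule $N$, totalize the cosimplicial $C\tau$-module $BPGL^{\wedge_{C\tau} \bullet +1} \otimes_{BP_*} N$ and check that its $BPGL$-homology recovers $N$. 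Full faithfulness on the hearts reduces to the statement that higher $\mathrm{Ext}$-groups in $C\tau\text{-mod}$ between heart objects vanish, which in turn follows from the collapse of the $BPGL$-based Adams spectral sequence in $C\tau$-modules --- this collapse is precisely the ``$\tau$-killing'' phenomenon that makes the $C\tau$-linear story tractable.

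The main obstacle is promoting this heart equivalence to an equivalence of the entire bounded derived $\infty$-categories. The cleanest strategy is to invoke a Schwede-Shipley-type recognition theorem: $C\tau\text{-mod}^b_{cell}$ admits $C\tau$ as a compact generator and $\cd^b(BP_*BP\text{-comod})$ admits $BP_*$ as a compact generator, and the $E_\infty$-endomorphism rings of these two generators both have homotopy equal to the Adams-Novikov $E_2$-page of the sphere spectrum. The hard part is rigidifying this agreement on homotopy to an honest $E_\infty$-equivalence of endomorphism rings: this is the technical heart of the argument and plausibly rests on a form of intrinsic formality. Concretely, one would use that the Adams-Novikov $E_2$-page is bigraded with nonzero classes only in ``even'' bidegrees, so that all potential $A_\infty$/$E_\infty$-obstructions are forced into trivial bidegrees and must vanish. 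Once this rigidification is secured, the equivalence of $\infty$-categories follows formally, and its compatibility with the $t$-structures --- together with the heart identification already established --- upgrades it to the desired equivalence of stable $\infty$-categories with $t$-structures.
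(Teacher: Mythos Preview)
This theorem is not proven in the present paper at all: it is quoted from the then-forthcoming work of Gheorghe--Wang--Xu \cite{GWXPP} and used purely as a black box in order to define the $C\tau$-module $wko$. There is therefore no proof in the paper to compare your proposal against.

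Since you have nonetheless sketched an argument, a few remarks. Your opening definition is not well-formed: $\pi_{**}(BPGL \wedge_{C\tau} M)$ is a bigraded comodule, not an object of $\cd^b(BP_*BP\text{-comod})$, so this cannot serve as the functor $\Phi$ on the whole stable $\infty$-category; at best it describes what $\Phi$ does on the heart. You seem to recognize this, since you then pivot to a Schwede--Shipley/Morita argument via compact generators, which is much closer in spirit to how the result is actually established. But the formality step you propose at the end --- arguing that all $A_\infty$/$E_\infty$ obstructions vanish because the Adams--Novikov $E_2$-page is concentrated in ``even'' bidegrees --- is not correct as stated at $p=2$: the $E_2$-page is certainly not concentrated in even stems, and the relevant vanishing one needs is more delicate (in the published Gheorghe--Wang--Xu argument the key input is a filtered-module/deformation description of $C\tau$ and an identification of its endomorphisms with Hopf-algebroid cohomology, rather than a bare parity argument). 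So your outline captures the right shape --- heart equivalence plus a rigidification to a derived equivalence --- but the specific mechanism you invoke for the hard step would not go through.
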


Using this theorem, we can define a $C\tau$-module which detects the classes we are interested in.

\begin{defin}
Recall that $BP_* \cong \z_{(2)}[v_1,\ldots]$ and $BP_*BP \cong BP_*[t_1,t_2,\ldots]$. We define $wko$ to be the $C\tau$-module corresponding to the $BP_*BP$-comodule 
$$\f_2[t^4_1,t^2_2,t_3,\ldots]$$
under the above equivalence of categories.
\end{defin}

\begin{rem2}
Analogously, one can define an exotic motivic analog of the motivic modular forms spectrum $mmf$ ~\cite{Ric17} by defining $wtmf$ to be the $C\tau$-module corresponding to the $BP_*BP$-comodule 
$$\f_2[t^8_1,t^4_2,t^2_3, t_4, \ldots]$$
under the above equivalence of categories. 
\end{rem2}

By construction, we have $\overline{H}^{**}(wko) \cong \overline{A}//\overline{A}(1)$ where $\overline{A}(1)$ is the subalgebra of $\overline{A}$ generated by $Sq^2, Sq^4$. To calculate its motivic homotopy groups, we will use the $C\tau$-linear $\overline{H}$-based Adams spectral sequence constructed in ~\cite[Section 2.4]{Ghe17b}. This spectral sequence has the form
$$Ext^{s,t,w}_{\overline{A}}(\overline{H}^{**}(X),\f_2) \Rightarrow [\Sigma^{t-s,w}C\tau,X]_{C\tau}.$$
Note that $\pi_{**}(X) \cong [\Sigma^{**}C\tau,X]_{C\tau}$ by the usual adjunction, so this spectral sequence computes the motivic homotopy groups of $X$.

\begin{lem}
The homotopy of $wko$ is 
$$\pi_{**}(wko) \cong \f_2[\eta,\nu, \alpha,\beta]/(\eta\nu,\nu^3,\nu\alpha,\alpha^2-\eta^2\beta)$$
where $|\eta| = (1,1)$, $|\nu| = (3,2)$, $|\alpha| = (11,7)$ and $|\beta| = (20,12)$. 
\end{lem}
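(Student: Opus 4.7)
The strategy is to run the $C\tau$-linear $\overline{H}$-based Adams spectral sequence of \cite[Section 2.4]{Ghe17b} applied to $wko$. Since $\overline{H}^{**}(wko) \cong \overline{A}//\overline{A}(1)$ by construction, the standard change of rings isomorphism identifies the $E_2$-page with
$$E_2^{s,t,w} \cong \Ext^{s,t,w}_{\overline{A}(1)}(\f_2,\f_2),$$
converging to $\pi_{**}(wko)$. The first task is then to compute this Ext algebra.

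The key observation is that $\overline{A}(1)$ should be isomorphic to the classical $A(1)$ as a graded Hopf algebra, up to the reindexing that replaces an internal degree $n$ by motivic bidegree $(2n,n)$. Using $\overline{A} \cong A/\tau \otimes E(\beta_\tau)$, Voevodsky's motivic Adem relations reduced mod $\tau$ give $Sq^2 Sq^2 = 0$ (matching $Sq^1 Sq^1 = 0$ in classical $A(1)$), and $Sq^2 = P_1$ is primitive in the $C\tau$-linear Hopf algebra by Proposition \ref{ohba} (matching the primitivity of $Sq^1$ classically). One verifies that $(Sq^4)^2$ reduces modulo $\tau$ to an element of $\overline{A}(1)$, so $\overline{A}(1)$ is closed and the Hopf algebra structure mirrors the classical $A(1)$. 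Transporting the classical calculation of $\Ext_{A(1)}(\f_2,\f_2)$ through this reindexing yields
$$\Ext^{s,t,w}_{\overline{A}(1)}(\f_2,\f_2) \cong \f_2[h_0,h_1,\alpha,\beta]/(h_0 h_1,\ h_1^3,\ h_1\alpha,\ \alpha^2 - h_0^2\beta),$$
with $h_0 \in \Ext^{1,2,1}$, $h_1 \in \Ext^{1,4,2}$, $\alpha \in \Ext^{3,14,7}$, and $\beta \in \Ext^{4,24,12}$. These detect classes in $(t-s,w)$-bidegrees $(1,1)$, $(3,2)$, $(11,7)$, and $(20,12)$ respectively, exactly matching the bidegrees in the statement.

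To finish, I verify collapse of the spectral sequence and the absence of hidden extensions. An Adams differential $d_r$ sends a class at $(t-s,s,w) = (d,f,w)$ to one at $(d-1,f+r,w)$. Writing a monomial as $h_0^i h_1^j \alpha^k \beta^l$ modulo relations, one has $(t-s)-w = j + 4k + 8l$, a nonnegative integer combination. Inspecting the potential target bidegrees for each of the four generators shows in each case that no nonzero class lives there; by the Leibniz rule the spectral sequence collapses at $E_2$. Since the relation $\alpha^2 = h_0^2\beta$ is already present on $E_\infty$, it converges without hidden extension to $\alpha^2 = \eta^2\beta$ in $\pi_{**}(wko)$, and the remaining relations $\eta\nu = 0$, $\nu^3 = 0$, $\nu\alpha = 0$ transport directly from $h_0 h_1$, $h_1^3$, $h_1\alpha$. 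The main obstacle is the Hopf algebra identification $\overline{A}(1) \cong A(1)$ with reindexed bidegrees: this requires a careful reduction of Voevodsky's motivic Adem relations and coproduct formulas modulo $\tau$ to confirm that no exotic $C\tau$-linear corrections alter the structure from the classical one.
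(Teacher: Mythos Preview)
Your argument is correct and follows essentially the same route as the paper: both use the $C\tau$-linear $\overline{H}$-based Adams spectral sequence, apply change of rings to reduce to $\Ext_{\overline{A}(1)}(\f_2,\f_2)$, identify $\overline{A}(1)$ with the classical $A(1)$ via the doubling reindexing $Sq^1\mapsto Sq^2$, $Sq^2\mapsto Sq^4$, and then observe that the spectral sequence collapses for bidegree reasons. Your treatment is in fact somewhat more explicit than the paper's in verifying the collapse and ruling out hidden extensions.
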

\begin{proof}
The $C\tau$-linear $\overline{H}$-based Adams spectral sequence has the form
$$E_2 = Ext^{***}_{\overline{A}}(\overline{H}^{**}(wko),\f_2) \Rightarrow \pi_{**}(wko).$$
By change-of-rings, the $E_2$-page is isomorphic to
$$E_2 \cong Ext^{***}_{\overline{A}(1)}(\f_2,\f_2).$$
The subalgebra $\overline{A}(1)$ is identical to the subalgebra $A(1)$ of the classical Steenrod algebra if we replace $Sq^1$ by $Sq^2$ and replace $Sq^2$ by $Sq^4$. Therefore the $E_2$-page is obtained from the classical $Ext_{A(1)}(\f_2,\f_2)$ (which can be computed by using an $A(1)$-resolution) by the following algorithm:
\begin{enumerate}
\item For each classical $\f_2$ in bidegree $(s,t-s)$, place a copy of $\f_2$ in tridegree $(s,2t-s,t)$.
\item Replace $h_0$-extensions between classes in the classical $Ext$ with $h_1$-extensions between the corresponding classes in the new $Ext$.
\item Replace $h_1$-extensions between classes in the classical $Ext$ with $h_2$-extensions between the corresponding classes in the new $Ext$.
\end{enumerate}
Applying this algorithm, we arrive at the following $E_2$-page:
\begin{fig}
The $E_2$-page of the $C\tau$-linear $\overline{H}$-based Adams spectral sequence converging to $\pi_{**}(wko)$.

\hskip.2in \includegraphics[scale=.4]{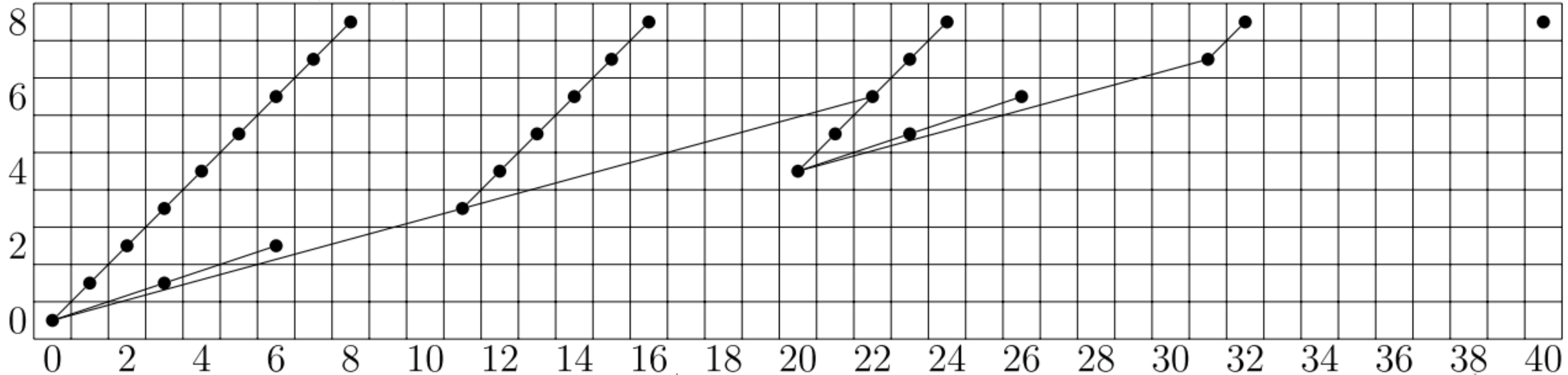}

A $\bullet$ represents $\f_2$. Lines of slope $1/2$ represent multiplication by $\eta$, lines of slope $1/3$ represent multiplication by $\nu$, and lines of slope $3/11$ represent multiplication by $\alpha$. 
\end{fig}
There is no room for differentials, so we obtain the desired isomorphism. 
\end{proof}

\begin{prop}
There is an isomorphism
$$\pi_{**}(wko^{tC_2}) \cong \lim_{\underset{n}{\longleftarrow}} \bigoplus_{i \geq -n} \left( \Sigma^{8i-1,4i-1} \pi_{**}(wBP\langle 0\rangle) \oplus \Sigma^{8i,4i} \pi_{**}(wBP\langle 0\rangle) \right).$$
\end{prop}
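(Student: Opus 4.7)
The plan is to adapt the proof of Proposition \ref{buh} to the $C\tau$-linear setting, with $\overline{A}(1)$ playing the role of $A(1)$ and $wBP\langle 0 \rangle$ playing the role of $H\z$. First, set up the inverse limit $C\tau$-linear $\overline{H}$-based Adams spectral sequence of \cite[Section 2.4]{Ghe17b} converging to $\pi_{**}(wko^{tC_2})$. Applying change-of-rings to $\overline{H}^{**}(wko) \cong \overline{A}//\overline{A}(1)$ simplifies its $E_2$-page to
$$E_2 \cong \lim_{\underset{n}{\longleftarrow}} Ext^{***}_{\overline{A}(1)}\bigl(\overline{H}^{**}(\Sigma^{1,0}\underline{L}^\infty_{-n}), \f_2\bigr).$$

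Next, analyze the $\overline{A}(1)$-module structure of $\overline{H}^{**}(\Sigma^{1,0}\underline{L}^\infty_{-\infty})$ using the action formulas $Sq^{2i}(v^k) = \binom{2k}{2i} v^{k+i}$ and $Sq^{2i+1}(uv^k) = \binom{2k}{2i} v^{k+i+1}$, along with the analogous formulas on $uv^k$. Direct calculation shows that $Sq^2$ acts nontrivially exactly on cells in topological dimensions $\equiv 2,3 \pmod 4$ and $Sq^4$ acts nontrivially exactly on cells in topological dimensions $\equiv 4,5,6,7 \pmod 8$. Mimicking the filtration strategy of Proposition \ref{buh}, construct a decreasing filtration on $\overline{H}^{**}(\Sigma^{1,0}\underline{L}^\infty_{-\infty})$ whose associated graded is a direct sum of suspensions of $\overline{A}(1)//\overline{A}(0)$, where $\overline{A}(0) := E(Sq^2)$ is the exterior subalgebra of $\overline{A}(1)$ generated by $Sq^2$. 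By tracking the bidegrees of the bottom cells of each summand, the decomposition takes the form
$$\bigoplus_{i \in \z}\Bigl( \Sigma^{8i-1,4i-1}\overline{A}(1)//\overline{A}(0) \oplus \Sigma^{8i,4i} \overline{A}(1)//\overline{A}(0) \Bigr).$$

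Since $Ext^{***}_{\overline{A}(0)}(\f_2, \f_2) \cong \f_2[w_0] \cong \pi_{**}(wBP\langle 0 \rangle)$ by change-of-rings (with $w_0$ detected by the class of $Sq^2$), this filtration produces an auxiliary spectral sequence
$$\bigoplus_{i \in \z}\Bigl( \Sigma^{8i-1,4i-1} \pi_{**}(wBP\langle 0\rangle) \oplus \Sigma^{8i,4i} \pi_{**}(wBP\langle 0\rangle) \Bigr) \Rightarrow E_2$$
whose differentials change tridegree by $(-1,-1,0)$ and hence vanish for sparsity reasons. The inverse limit $C\tau$-linear Adams spectral sequence itself then collapses by the same tridegree-counting argument used in Proposition \ref{buh}, yielding the claimed isomorphism after passing to the inverse limit over $n$.

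The main obstacle is constructing the explicit $\overline{A}(1)$-module decomposition above. One must verify that the candidate bottom-cell classes in bidegrees $(8i-1,4i-1)$ and $(8i,4i)$ generate disjoint copies of $\overline{A}(1)//\overline{A}(0)$ whose associated-graded pieces exhaust $\overline{H}^{**}(\Sigma^{1,0}\underline{L}^\infty_{-\infty})$. This is a $C\tau$-linear analog of the Davis--Mahowald splitting \cite{DM84} of $H^*(RP^\infty_{-\infty})$ as an $A(1)$-module and requires careful bookkeeping of the $Sq^2, Sq^4$ action on the cells $v^k$ and $uv^k$.
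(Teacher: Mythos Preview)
Your proposal is correct and follows essentially the same approach as the paper. The paper additionally runs the Atiyah--Hirzebruch spectral sequence explicitly (with a figure) before turning to the inverse limit $C\tau$-linear Adams spectral sequence to resolve extensions, whereas you go directly to the Adams spectral sequence; since the Adams side alone determines both the additive structure and the extensions, your streamlined route is equally valid and matches the paper's filtration and collapse arguments verbatim.
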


\begin{proof}
The Atiyah-Hirzebruch spectral sequence arising from the cellular filtration of $\Sigma^{1,0} \underline{L}^\infty_{-\infty}$ has the form
$$E^1_{s,t,u} = wko_{t,u}(S^{s,\lfloor s/2 \rfloor}) = \pi_{t+s,u+\lfloor s/2 \rfloor}(wko) \Rightarrow \pi_{s+t,u+\lceil s/2 \rceil} ( wko^{tC_2}).$$
This spectral sequence is depicted in the Figure ~\ref{ahwko} below.

The differentials can be read off from the action of $\overline{A}(1) = \langle Sq^2,Sq^4 \rangle$ on $H^{**}(\underline{L}^\infty_{-\infty})$. This action is given by
$$Sq^{2i}(v^k) = {2k\choose2i} v^{k+i}$$
$$Sq^{2i}(uv^k) = {2k\choose2i} uv^{k+i}.$$
In particular, we observe the following:
\begin{enumerate}
\item $Sq^2$ acts nontrivially on all cells in topological dimension congruent to $2,3 \mod 4$. 
\item $Sq^4$ acts nontrivially on all cells in topological dimension congruent to $4,5,6,7 \mod 8$.
\end{enumerate}

To solve extensions, we use the inverse limit $C\tau$-linear $\overline{H}$-based Adams spectral sequence, where the inverse limit is taken over the negative skeleta of $\Sigma^{1,0} \underline{L}^\infty_{-n}$. Then we have
$$E^{***}_2 = \lim_{\underset{n}{\longleftarrow}}  Ext^{***}_{\overline{A}}(\overline{A}//\overline{A}(1) \otimes \overline{H}^{**}(\Sigma^{1,0} \underline{L}^\infty_{-n}),\f_2) \Rightarrow \pi_{**}(wko^{tC_2}).$$
Here we have used that $C\tau$-linear $\overline{H}$-cohomology satisfies the strong K\"unneth theorem for smash products over $S^{0,0}$ ~\cite[Proposition 2.9]{Ghe17b}. By change-of-rings, we can rewrite the $E_2$-term as
$$\lim_{\underset{n}{\longleftarrow}} Ext_{\overline{A}(1)}(\overline{H}^{**}(\Sigma^{1,0} \underline{L}^\infty_{-n}),\f_2)$$
so we must understand the action of $Sq^2$ and $Sq^4$ on $\overline{H}^{**}(\Sigma^{1,0} \underline{L}^\infty_{-n})$. This cohomology is $\overline{H}^{**}(\Sigma^{1,0} \underline{L}^\infty_{-n}) = \pi_{**}(H\f_2 \wedge C\tau \wedge \Sigma^{1,0} \underline{L}^\infty_{-n}) \cong \Sigma^{1,0} \f_2[u,v,v^{-1}]/(u^2)$ since smashing with $C\tau$ kills multiplication by $\tau$ ~\cite[Lemma 5.3]{Ghe17}. The map $\Sigma^{1,0} \underline{L}^\infty_{-n}\to C\tau \wedge \Sigma^{1,0} \underline{L}^\infty_{-n} $ sends $\Sigma^{1,0} u$ and $\Sigma^{1,0} v$ to the generators of the same name, so we can compute the action of the generators of $\overline{A}(1)$ on $\overline{H}^{**}(\Sigma^{1,0} \underline{L}^\infty_{-n})$ using the relations above. Therefore the action of $\overline{A}(1)$ is nontrivial on the cells in topological dimension congruent to $6,7 \mod 8$. Each such cell corresponds to a direct summand in the following algebraic spectral sequence with associated graded consisting of suspensions of $\overline{A}(1)//\overline{A}(0)$, which can be constructed like the analogous spectral sequence in the proof of Proposition ~\ref{buh}:
$$\bigoplus_{i \in \z} \Sigma^{8i-1,4i-1} Ext_{\overline{A}(0)}(\f_2) \oplus \Sigma^{8i,4i} Ext_{\overline{A}(0)}(\f_2) \Rightarrow Ext_{\overline{A}(1)}(\overline{H}^{**}_c(\Sigma^{1,0} \underline{L}^\infty_{-\infty})).$$
The differentials in this spectral sequence change (topological, homological) bidegree by $(-1,-1)$ and preserve motivic weight, so the spectral sequence collapses. The direct sum of resulting Adams spectral sequences also collapses, producing Figure ~\ref{ASSwko} below which solves the extensions in the Atiyah-Hirzebruch spectral sequence.

\begin{fig}\label{ahwko}
The following is the Atiyah-Hirzebruch spectral sequence for $-20 \leq s \leq 1$ and $-20 \leq t \leq 20$ with some of the differentials drawn in. These differentials are periodic and can be propogated downwards by examining the $\overline{A}(1)$-module structure of $H^{**}(\underline{L}^{5}_{-10})$. A $\square$ represents $\f_2[\eta]$, and a $\bullet$ represents $\f_2$. 

\includegraphics[scale=.7]{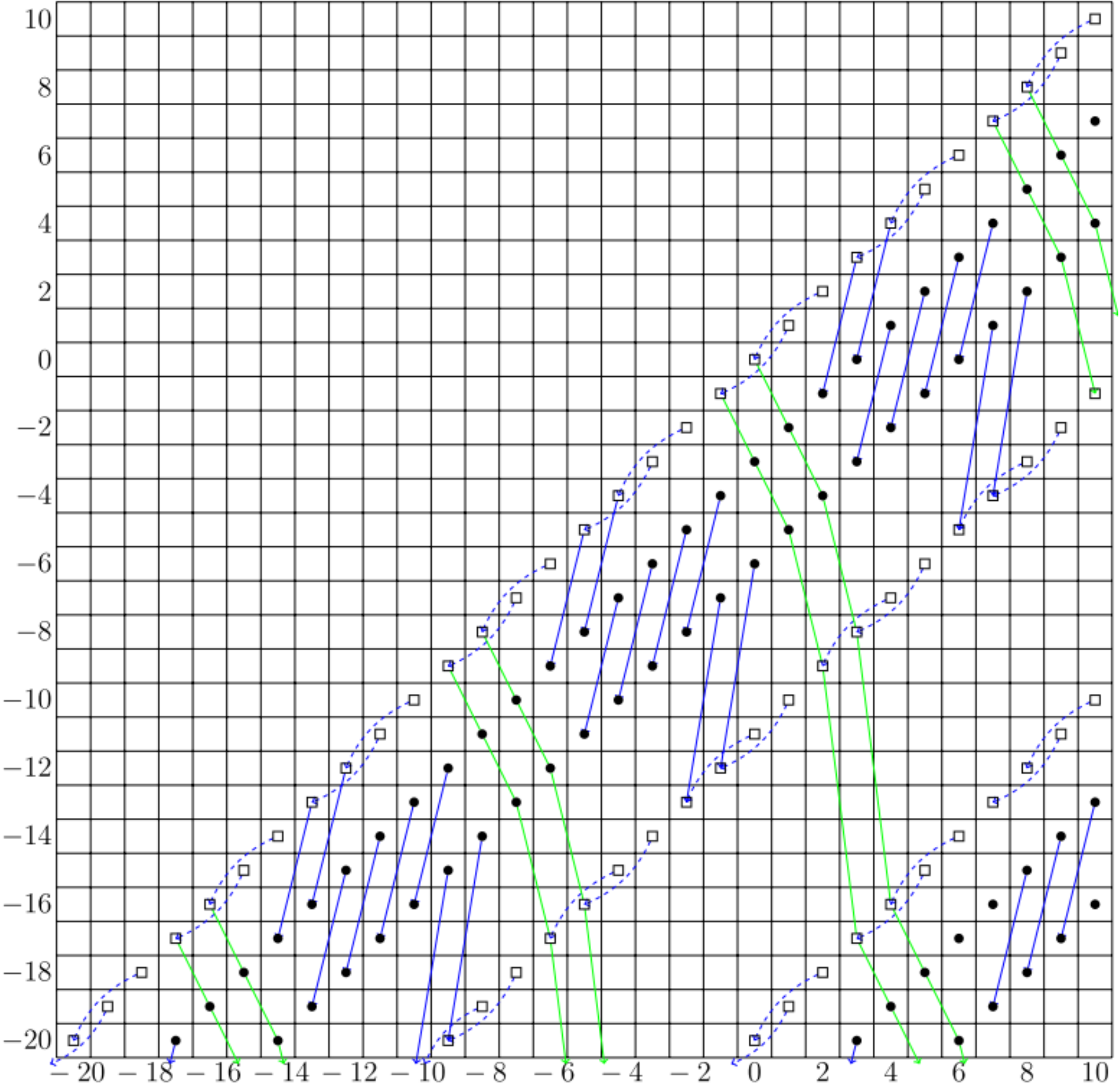}

\end{fig}

\begin{fig}\label{ASSwko}
The following is the inverse limit $C\tau$-linear $\overline{H}$-based Adams spectral sequence for $wko^{tC_2}$ for $-9 \leq t-s \leq 8$ and $0 \leq s \leq 5$. 

\hskip1.2in \includegraphics[scale=.3]{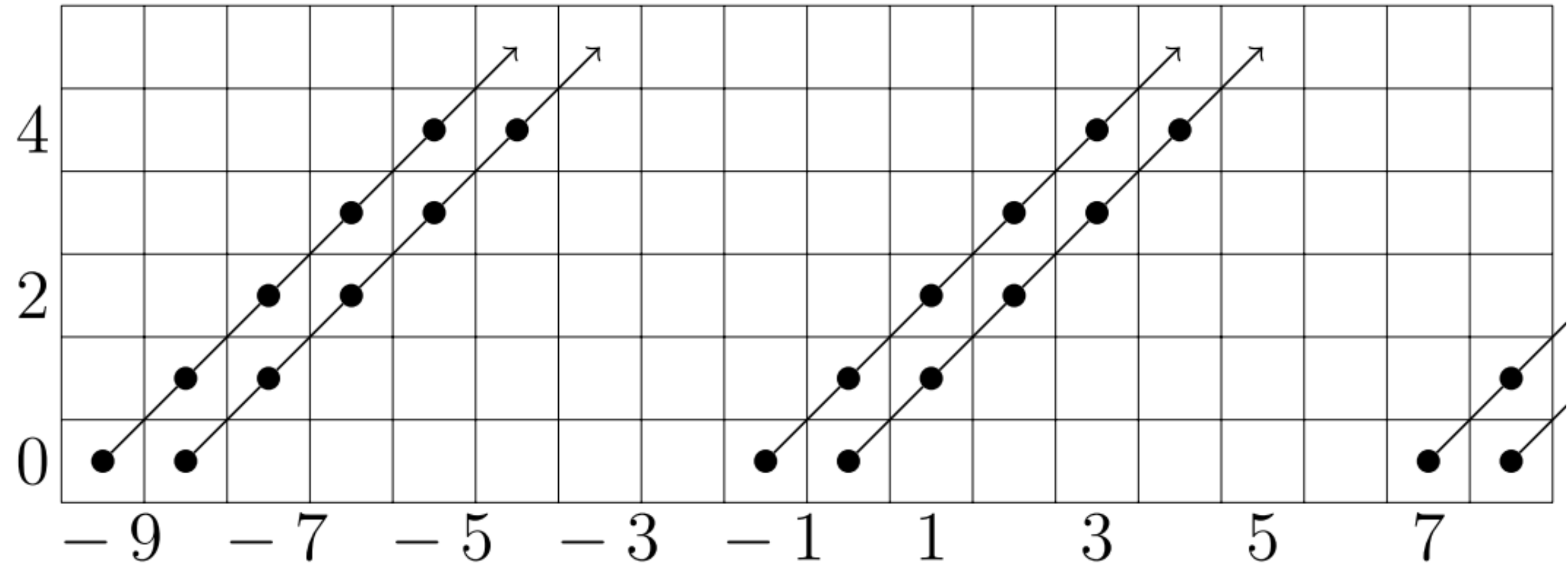}

\end{fig}

\end{proof}

As in the case of $kq$ and $BPGL\langle 1 \rangle$, the same proof applies \emph{mutatis mutandis} to produce a splitting in homotopy groups for $wBP\langle 1 \rangle^{tC_2}$. 

\begin{cor}
There is an isomorphism in homotopy groups
$$\pi_{**}(wBP\langle 1 \rangle^{tC_2}) \cong \bigoplus_{i \in \z} \left( \Sigma^{4i-1,2i-1} \pi_{**}(wBP\langle 0 \rangle) \oplus \Sigma^{4i,2i} \pi_{**}(wBP\langle 0\rangle) \right)$$
\end{cor}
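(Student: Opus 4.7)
The plan is to adapt the proof of the preceding proposition \emph{mutatis mutandis}, replacing the subalgebra $\overline{A}(1) = \langle Sq^2, Sq^4\rangle$ by the exterior subalgebra $E(P_1, P_2) \subset \overline{A}$, and replacing the coefficient ring $\pi_{**}(wko)$ by $\pi_{**}(wBP\langle 1 \rangle) \cong \f_2[w_0, w_1]$. The key inputs are the identification $\overline{H}^{**}(wBP\langle 1\rangle) \cong \overline{A}//E(P_1, P_2)$ from \cite{Ghe17b} and the fact that $Ext_{E(P_1)}(\f_2, \f_2) \cong \pi_{**}(wBP\langle 0\rangle)$.

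First I would set up the Atiyah--Hirzebruch spectral sequence associated to the cellular filtration of $\Sigma^{1,0}\underline{L}^\infty_{-\infty}$, with $E^1$-page $\pi_{t-s, u-\lfloor s/2\rfloor}(wBP\langle 1\rangle)$ and abutment $\pi_{**}(wBP\langle 1\rangle^{tC_2})$. The $d^r$-differentials are controlled by the action of $E(P_1, P_2)$ on $\overline{H}^{**}(\underline{L}^\infty_{-\infty})$. Using the congruence $Sq^{2i}(v^k) \equiv \binom{k}{i} v^{k+i} \pmod{2}$ (which follows from Lucas' theorem applied to $\binom{2k}{2i}$) together with the definition $P_2 = [Sq^4, Sq^2]$, one checks that $P_2$ acts nontrivially on cells of topological dimension $\equiv 2, 3 \pmod 4$, exactly matching the dimensions where $P_1 = Sq^2$ acts nontrivially.

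Second, to solve hidden extensions, I would invoke the inverse limit $C\tau$-linear $\overline{H}$-based Adams spectral sequence, and reduce its $E_2$-page via the K\"unneth theorem \cite[Proposition 2.9]{Ghe17b} together with change-of-rings to
\[
\lim_{n} Ext^{***}_{E(P_1, P_2)}\bigl(\overline{H}^{**}(\Sigma^{1,0}\underline{L}^\infty_{-n}), \f_2\bigr).
\]
I would then define a filtration of $\overline{H}^{**}_c(\Sigma^{1,0}\underline{L}^\infty_{-\infty})$ whose associated graded is a direct sum of $(4i-1)$- and $4i$-fold motivic suspensions of $E(P_1, P_2)//E(P_1)$, one pair of summands arising from each pair of cells in topological dimensions $\equiv 3, 0 \pmod 4$ where $P_2$ acts trivially. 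By a second change-of-rings, each summand contributes a copy of $Ext_{E(P_1)}(\f_2,\f_2) \cong \pi_{**}(wBP\langle 0\rangle)$, producing an algebraic spectral sequence
\[
\bigoplus_{i \in \z}\bigl( \Sigma^{4i-1, 2i-1} \pi_{**}(wBP\langle 0\rangle) \oplus \Sigma^{4i, 2i} \pi_{**}(wBP\langle 0\rangle) \bigr) \Rightarrow Ext_{E(P_1, P_2)}\bigl(\overline{H}^{**}_c(\Sigma^{1,0}\underline{L}^\infty_{-\infty}), \f_2\bigr).
\]
This algebraic spectral sequence collapses because its differentials change tridegree by $(-1,-1,0)$ while motivic weight constraints preclude nontrivial hits, and the resulting Adams spectral sequence also collapses for tridegree reasons. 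Reading off the abutment yields the claimed splitting.

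The main obstacle is the computation of the $P_2$-action on the motivic cohomology of $\underline{L}^\infty_{-\infty}$: since $P_2$ is defined as the commutator $[Sq^4, Sq^2]$ rather than a single Steenrod operation, its action is not immediate from the Wu-type formulas for individual $Sq^{2i}$'s, and in particular the period $4$ appearing in the corollary (rather than $8$ as in the $wko$ case) depends crucially on this calculation. Once this action is verified and shown to be compatible with the filtration, the remaining steps are direct degree-counting analogues of those in the proof of the preceding proposition.
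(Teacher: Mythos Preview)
Your proposal is correct and follows exactly the approach indicated by the paper. The paper's ``proof'' of this corollary consists solely of the remark that the argument for $wko^{tC_2}$ applies \emph{mutatis mutandis} with $E(P_1,P_2)$ in place of $\overline{A}(1)$ and $\pi_{**}(wBP\langle 1\rangle)$ in place of $\pi_{**}(wko)$; you have spelled out precisely this substitution, and you correctly isolate the one genuinely new ingredient---the action of $P_2=[Sq^4,Sq^2]$ on $\overline{H}^{**}(\underline{L}^\infty_{-\infty})$---as the point requiring verification.

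One small remark on your filtration: since $P_1$ has topological degree $2$ and $P_2$ has topological degree $6$, a skeletal-type filtration with strata of width $4$ will preserve the $P_1$-action and kill the $P_2$-action in the associated graded, giving copies of $E(P_1,P_2)//E(P_2)$ rather than $E(P_1,P_2)//E(P_1)$ as you wrote. This does not affect the outcome: by change-of-rings the contribution of each stratum is $Ext_{E(P_2)}(\f_2,\f_2)=\f_2[w_1]$, and the $d_1$-differential in the resulting algebraic spectral sequence (induced by the $P_2$-action) then produces the claimed $\f_2[w_0]\cong\pi_{**}(wBP\langle 0\rangle)$ summands in the correct bidegrees. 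Alternatively, one can compute $Ext_{E(P_1,P_2)}$ of the relevant module directly from a minimal resolution and observe that $w_1$ acts through $w_0$-multiples, yielding the same answer. Either route collapses for tridegree reasons, as you say.
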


\begin{rem2}
An analogous computation where we replace $E(P_0,P_1)$ by $E(P_0,P_1,\ldots,P_n)$, along with an induction on $n$ to solve extensions should produce a similar splitting in homotopy groups for $wBP\langle n \rangle^{tC_2}$. We conjecture in analogy with Remark ~\ref{bptconj} that for any $n \geq 1$, there is an isomorphism in homotopy groups
$$\pi_{**}(wBP\langle n \rangle^{tC_2}) \cong \bigoplus_{i \in \z} \left( \Sigma^{4i,2i} \pi_{**}(wBP\langle n-1\rangle \oplus \Sigma^{4i-1,2i-1} \pi_{**}(wBP\langle n-1 \rangle) \right).$$
\end{rem2}

\section{Computations of the $E$-Mahowald invariant} 

The classical Mahowald invariant is conjectured to produce redshift, i.e. the Mahowald invariant of a $v_n$-periodic element $\alpha \in \pi_*(S^0)$ should be $v_n$-torsion. In this section, we present some evidence that the motivic Mahowald invariant of a $v_n$-periodic element is $v_{n+1}$-periodic and the motivic Mahowald invariant of a $w_n$-periodic element is $w_{n+1}$-periodic. 

The following is a motivic analog of ~\cite[Theorem 2.16]{MR93}. We will use this in the following section to compute $M(2^i)$. 

\begin{prop}\label{kq2}
For any $a,b \geq 0$, we have
$$M_{kq}(2^{4a+b}) \ni \begin{cases}
\beta^a \quad &b =0, \\
\eta \beta^a & b=1,\\
\eta^2 \beta^a & b=2, \\
\alpha \beta^a & b=3 .
\end{cases}$$
\end{prop}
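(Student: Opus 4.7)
The plan is to combine the Atiyah--Hirzebruch spectral sequence (AHSS) analyzed in Proposition~\ref{buh} with Betti realization, adapting the strategy used for the classical result \cite[Theorem 2.16]{MR93} and for the low-degree cases in Propositions~2.11--2.13.

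First, I would enumerate the nontrivial classes in bidegree $(t,u) = (0,0)$ on each cell $s \leq 0$ of the $E^1$-page of the AHSS for $kq^{tC_2}$. Recall $E^1_{s,t,u} = \pi_{t-s,u-\lfloor s/2 \rfloor}(kq)$, so candidates on cell $s$ come from $\pi_{-s,-\lfloor s/2\rfloor}(kq)$. Using the presentation $\pi_{**}(kq) \cong \z_2[\tau,\eta,\alpha,\beta]/(2\eta,\tau\eta^3,\eta\alpha,\alpha^2-4\beta)$, the only nontrivial candidates are $\beta^a$ on cell $-8a$, $\eta\beta^a$ on cell $-8a-1$, $\eta^2\beta^a$ (with a $\tau$-factor dictated by the motivic weight of the cell) on cell $-8a-2$, and $\alpha\beta^a$ on cell $-8a-4$. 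The relations $\eta\alpha = 0$ and $\tau\eta^3 = 0$ force cells $-8a-3$ and $-8a-5$ through $-8a-7$ to contain no candidate, producing the ``skip'' between $b=2$ and $b=3$ observed in the statement.

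Next, I would apply Betti realization $Re_{\c} : SH_{\c} \to SH$ to the motivic Mahowald diagram. Since $Re_{\c}$ sends $kq$ to $ko$, $\underline{L}^\infty_{-N}$ to $RP^\infty_{-2N}$, and preserves the class $2^{4a+b}$, nontriviality of the classical composition in \cite[Theorem 2.16]{MR93} forces nontriviality of the motivic composition at the corresponding level, following the argument used in Propositions~2.11--2.13. This supplies the upper bound $N \leq 4a+1$ for $b=0,1$; $N \leq 4a+2$ for $b=2$; and $N \leq 4a+3$ for $b=3$. The candidate enumeration in the previous paragraph supplies the matching lower bound and identifies the wedge summand: even $-s$ (cells $-8a$, $-8a-2$, $-8a-4$) gives a class on the higher-dimensional sphere $S^{-2N+2,-N+1}$, while the odd cell $-8a-1$ gives a class on $S^{-2N+1,-N}$.

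The main obstacle is ruling out exotic motivic differentials in the AHSS that could shift the detection filtration away from the classical pattern, or hidden multiplication-by-$2$ extensions that allow some candidate at a smaller $N$ to detect $2^{4a+b}$. Both concerns are handled by the splitting $\pi_{**}(kq^{tC_2}) \cong \lim_{n} \bigoplus_{i \geq -n} \pi_{**}(\Sigma^{4i,2i}H\z_2)$ established in Proposition~\ref{buh}, whose collapse at $E_\infty$ pins down the additive and multiplicative filtration structure, forcing the motivic detection filtration of $2^{4a+b}$ to agree cell-for-cell with the classical filtration of $2^{4a+b}$ in $ko^{tC_2}$.
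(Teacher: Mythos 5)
Your proposal aims at the same conclusion as the paper and uses the same input (Proposition~\ref{buh}), but it substitutes a Betti-realization argument for the step where the paper simply reads the detection filtration $s$ of $2^{4a+b}$ directly off Figure~\ref{ahkq}, whose differentials and hidden $\cdot 2$-extensions were worked out in the proof of Proposition~\ref{buh}. Your $E^1$-candidate enumeration is correct (including the $\tau$-twist on cell $-8a-2$), and the Betti realization $Re_{\c}$, together with \cite[Theorem 2.16]{MR93}, does give the claimed upper bound on $N$ provided one grants $Re_{\c}(kq) \simeq ko$ — a fact the paper never needs to invoke here and which you should cite or verify.

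The genuine gap is in the last paragraph. The splitting of $\pi_{**}(kq^{tC_2})$ is an isomorphism of (limits of) bigraded groups; it does not ``collapse at $E_\infty$'' (the AHSS has plenty of differentials), and it does not by itself determine which Atiyah--Hirzebruch filtration $s$ detects a given power of $2$. Knowing that $\pi_{0,0}(kq^{tC_2})\cong\z_2$ and that the $E^1$-candidates sit only at $s\equiv 0,-1,-2,-4\ \mathrm{mod}\ 8$ still leaves an ambiguity: for $b=0$ your Betti bound gives $s_0 \geq -8a-1$ and your enumeration gives $s_0 \leq -8a$, but both $-8a$ and $-8a-1$ are candidates, so the two bounds do not meet. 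You need an extra step — for example the inductive observation that if $2^{4a}$ were detected at $-8a-1$, then $2^{4a+1}$ would be pushed to $s\le -8a-2$, contradicting the Betti bound for $2^{4a+1}$; or, more simply, the explicit hidden $\cdot 2$-extensions recorded in Figure~\ref{ahkq}, which is exactly what the paper appeals to. As written, the phrase ``forcing the motivic detection filtration to agree cell-for-cell with the classical filtration'' asserts what must be proved.
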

\begin{proof} 
We must determine the minimal $N > 0 $ such that the left-hand composition is nontrivial in the following diagram:
\[
\begin{tikzcd}
S^{0,0} \arrow[r,dashed] \arrow{d}{2^{4a+b}} & \Sigma^{-2N+1,-N} kq \vee \Sigma^{-2N+2,-N+1}kq \arrow{d}\\
kq^{tC_2} \arrow{r}& \Sigma^{1,0}kq \wedge \underline{L}^\infty_{-N}
\end{tikzcd}
\]
One can show from the definition that $N = \lceil -s+1/2\rceil$ where $s$ is the cellular filtration degree of the element detecting $2^{4a+b} \in \pi_{**}(H\z)$ in the Atiyah-Hirzebruch spectral sequence computing $\pi_{**}(kq^{tC_2})$. For small values of $a,b$, we can read off the desired degrees $s$ from the picture of this Atiyah-Hirzebruch spectral sequence in the previous section. We obtain the following table of values:
\begin{center}
\begin{tabular}{ c c c c }
$x$ & s & N & $M_{kq}(x)$\\
 2 & -1 & 1 & $\eta$\\ 
4 & -2 & 2 & $\eta^2$\\  
 8 & -4 & 3 & $\alpha$\\
16 & -8 & 5   & $\beta$
\end{tabular}
\end{center}
This spectral sequence is $8$-periodic in the $s$-direction, so the desired result follows from this low-degree computation.
\end{proof}

The same proof using the Atiyah-Hirzebruch spectral sequence for $BPGL\langle 1 \rangle$ proves the following:

\begin{cor}
For any $i \geq 1$, we have $v^i_1 \in M_{BPGL\langle 1 \rangle}(v^i_0)$. 
\end{cor}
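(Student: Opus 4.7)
The plan is to imitate the proof of Proposition~\ref{kq2} essentially verbatim, with $kq$ replaced by $BPGL\langle 1\rangle$ throughout. The first step is to set up the Atiyah--Hirzebruch spectral sequence
$$E^1_{s,t,u} = BPGL\langle 1\rangle_{t,u}(S^{s,\lfloor s/2\rfloor}) = \pi_{t-s,\, u - \lfloor s/2\rfloor}(BPGL\langle 1\rangle) \Rightarrow \pi_{t,u}(BPGL\langle 1\rangle^{tC_2})$$
arising from the cellular filtration of $\Sigma^{1,0}\underline{L}^\infty_{-\infty}$. Since $\pi_{**}(BPGL\langle 1\rangle) \cong \z_2[\tau, v_1]$ with $|v_1| = (2,1)$, each $s$-column contributes a copy of this polynomial ring; there are no $\eta$-towers in play, so the $E^1$-page is combinatorially simpler than in the $kq$ case.

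Next, I would determine the differentials from the $E(Q_0, Q_1)$-module structure corresponding to the cooperations $H^{**}(BPGL\langle 1\rangle) \cong A/\!/E(Q_0, Q_1)$. The operation $Q_0 = Sq^1$, which detects multiplication-by-$2$ attaching maps, yields $d^1$-differentials $d^1(x[s]) = 2 \cdot x[s-1]$ (for appropriate $s$) exactly as for $kq$; the survivors in each $s$-column are $\f_2[\tau, v_1]$. The Milnor primitive $Q_1 = [Sq^2, Sq^1]$, which has motivic bidegree $(3, 1)$, detects $v_1$-attaching maps and produces $d^3$-differentials $d^3(x[s]) = v_1 \cdot x[s-3]$ on those cells where $Q_1$ acts nontrivially on $H^{**}(\underline{L}^\infty_{-\infty})$, computed from the Steenrod square formulas of Section~2.

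The hidden extensions would then be resolved by the inverse limit motivic Adams spectral sequence, as in the proof of Proposition~\ref{buh}: change-of-rings reduces the $E_2$-term to $\lim_n \Ext_{E(Q_0, Q_1)}(H^{**}(\Sigma^{1,0}\underline{L}^\infty_{-n}), \m_2)$, and a filtration of the motivic cohomology whose associated graded consists of suspensions of $E(Q_0, Q_1)/\!/E(Q_0) = E(Q_1)$ yields an algebraic spectral sequence collapsing for tridegree reasons. This recovers the splitting of the preceding corollary and pins down the extensions in the AHSS.

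Finally, I would read off the cellular filtration $s_i$ at which $v_0^i$ is detected, set $N_i = \lceil (1 - s_i)/2 \rceil$, and tabulate small values to identify the detecting class as $v_1^i$; $v_1$-periodicity of the spectral sequence then extends the conclusion to all $i \geq 1$. The main obstacle I expect is verifying the precise filtration pattern $s_i$: in the absence of $\eta$-type classes, the jumps $s_i - s_{i-1}$ in the $kq$ pattern $(-1, -2, -4, -8, \ldots)$ do not translate directly, and one must check that each $v_0^i$ is detected by the pure class $v_1^i$ rather than some mixed monomial $v_0^a v_1^b$ with $a + b = i$. This amounts to a careful bookkeeping of how $Q_0$- and $Q_1$-attaching maps interact on the negative cells of $\underline{L}^\infty_{-\infty}$, using the inverse limit Adams argument sketched above.
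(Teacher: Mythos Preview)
Your proposal is correct and takes essentially the same approach as the paper, which simply asserts that the same proof as Proposition~\ref{kq2}, run with the Atiyah--Hirzebruch spectral sequence for $BPGL\langle 1\rangle$ in place of $kq$, establishes the corollary. The bookkeeping concern you raise at the end is mild: since $\pi_{**}(BPGL\langle 1\rangle)\cong \z_2[\tau,v_1]$ has no $\eta$-classes, the differential pattern governed by $E(Q_0,Q_1)$ is actually simpler than the $A(1)$-pattern for $kq$, and the periodicity of the spectral sequence forces $v_0^i$ to be detected by $v_1^i$ for all $i$.
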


\begin{rem2}~\label{vnrmk} Assuming the conjectured splittings in homotopy groups from the previous section, the previous proof should generalize to prove that redshift for $v_{n-1}$ occurs in the approximation to the motivic Mahowald invariant using $E=BPGL\langle n \rangle$. In particular, we expect that for any $n \geq 1$, we have $v^i_n \in M_{BPGL\langle n \rangle}(v^i_{n-1})$.
\end{rem2}

The following result will be used to compute $M(\eta^i)$. 

\begin{prop}\label{wkoeta}
For any $a,b \geq 0$, we have
$$M_{wko}(\eta^{4a+b}) \ni \begin{cases}
\beta^a \quad &b =0 ,\\
\nu \beta^a & b=1,\\
\nu^2 \beta^a & b=2,\\
\alpha \beta^a & b=3 .
\end{cases}$$
\end{prop}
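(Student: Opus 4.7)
The plan is to follow the proof of Proposition \ref{kq2} essentially verbatim, with $wko^{tC_2}$ replacing $kq^{tC_2}$ and $\eta$ replacing $2$. First, I would set up the coset-of-completions diagram
\[
\begin{tikzcd}
S^{4a+b,4a+b} \arrow[r,dashed] \arrow{d}{\eta^{4a+b}} & \Sigma^{-2N+1,-N} wko \vee \Sigma^{-2N+2,-N+1} wko \arrow{d}\\
wko^{tC_2} \arrow{r}& \Sigma^{1,0} wko \wedge \underline{L}^\infty_{-N}
\end{tikzcd}
\]
and look for the minimum $N > 0$ for which the left-hand vertical composition is nontrivial. Exactly as in the $kq$ case, from the definition one obtains $N = \lceil (1-s)/2 \rceil$, where $s$ is the cellular filtration at which the class $\eta^{4a+b}$ is detected in the Atiyah--Hirzebruch spectral sequence of Figure \ref{ahwko} converging to $\pi_{**}(wko^{tC_2})$.

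Second, I would dispatch the base cases $(a,b) \in \{(0,1),(0,2),(0,3),(1,0)\}$ by direct inspection of Figure \ref{ahwko}, yielding the table
\begin{center}
\begin{tabular}{cccc}
$x$ & $s$ & $N$ & $M_{wko}(x)$ \\ \hline
$\eta$ & $-2$ & $2$ & $\nu$ \\
$\eta^2$ & $-4$ & $3$ & $\nu^2$ \\
$\eta^3$ & $-8$ & $5$ & $\alpha$ \\
$\eta^4$ & $-16$ & $9$ & $\beta$
\end{tabular}
\end{center}
For each entry, the identification of the element of $\pi_{**}(wko)$ that detects $\eta^{4a+b}$ at filtration $s$ is supplied by the hidden multiplicative extensions in the AHSS which are settled by the inverse-limit $C\tau$-linear $\overline{H}$-based Adams spectral sequence appearing in the preceding computation of $\pi_{**}(wko^{tC_2})$.

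Third, to promote the base cases to all $a \geq 0$, I would invoke the $16$-periodicity of the AHSS for $wko^{tC_2}$ in the $s$-direction: since $\beta \in \pi_{20,12}(wko)$ acts by shifting the cellular filtration by $-16$, the filtrations for general $a$ and $b \in \{0,1,2,3\}$ are $s = -16a - e_b$ with $e_b \in \{0,2,4,8\}$ respectively. Correspondingly one obtains $N = 8a+1$, $8a+2$, $8a+3$, $8a+5$ and Mahowald invariants $\beta^a$, $\nu\beta^a$, $\nu^2\beta^a$, $\alpha\beta^a$, matching the claimed statement.

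The principal obstacle I anticipate is verifying that the hidden multiplicative extensions in the AHSS genuinely identify the indicated classes in $\pi_{**}(wko)$ as detecting $\eta^{4a+b}$ at the predicted filtrations. The base-case extensions follow from the inverse-limit Adams spectral sequence argument appearing in the splitting computation of $\pi_{**}(wko^{tC_2})$, and because the $\overline{A}(1)$-action on $\overline{H}_c^{**}(\Sigma^{1,0} \underline{L}^\infty_{-\infty})$ is itself $16$-periodic in $s$, these base-case extensions propagate cleanly to all $a$ without requiring further input.
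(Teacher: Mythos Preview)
Your proposal is correct and follows essentially the same approach as the paper: set up the defining diagram, identify $N$ via the Atiyah--Hirzebruch filtration $s$, read off the four base cases from Figure~\ref{ahwko}, and then invoke the $16$-periodicity of the spectral sequence in the $s$-direction to propagate to all $a$. Your discussion of the hidden extensions is slightly more explicit than the paper's, but the argument is the same.
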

\begin{proof}
We must determine the minimal $N > 0 $ such that the left-hand composition is nontrivial in the following diagram:
\[
\begin{tikzcd}
S^{i,i} \arrow[r,dashed] \arrow{d}{\eta^{4a+b}} & \Sigma^{-2N+1,-N} wko \vee \Sigma^{-2N+2,-N+1}wko \arrow{d}\\
wko^{tC_2} \arrow{r}& \Sigma^{1,0}wko \wedge \underline{L}^\infty_{-N}
\end{tikzcd}
\]
One can show from the definition that $N = \lceil -s+1/2\rceil$ where $s$ is the cellular filtration degree of the element detecting $\eta^{4a+b} \in \pi_{**}(wBP\langle 0 \rangle)$ in the Atiyah-Hirzebruch spectral sequence computing $\pi_{**}(wko^{tC_2})$. For small values of $a,b$, we can read off the desired degrees $s$ from the picture of this Atiyah-Hirzebruch spectral sequence in the previous section. We obtain the following table of values:
\begin{center}
\begin{tabular}{ c c c c }
$x$ & s & N & $M_{wko}(x)$\\
 $\eta$ & -2 & 2& $\nu$\\ 
$\eta^2$ & -4 & 3 & $\nu^2$\\  
 $\eta^3$ & -8 & 5 & $\alpha$\\
$\eta^4$ & -16 & 9  & $\beta$
\end{tabular}
\end{center}
This spectral sequence is $(16,4)$-periodic in the $(s,t+s)$-direction, so the desired result follows from this low-degree computation.
\end{proof}

The same proof using the Atiyah-Hirzebruch spectral sequence for $wBP\langle 1 \rangle^{tC_2}$ proves the following.

\begin{cor}
For any $i \geq 1$, we have $w^i_1 \in M_{wBP\langle 1 \rangle}(w^i_0)$. 
\end{cor}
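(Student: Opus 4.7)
The plan is to mirror the proof of Proposition~\ref{wkoeta}, substituting $wBP\langle 1 \rangle^{tC_2}$ for $wko^{tC_2}$ and the class $w_0^i \in \pi_{**}(wBP\langle 0 \rangle)$ for $\eta^i$. Using the splitting of $\pi_{**}(wBP\langle 1 \rangle^{tC_2})$ established just above as a (limit of) wedge of suspensions of $\pi_{**}(wBP\langle 0 \rangle)$, the element $w_0^i$ sitting in the $i=0$ summand determines a class in the $C_2$-Tate construction whose motivic Mahowald invariant we wish to compute.

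First I would set up the Atiyah--Hirzebruch spectral sequence
$$E^1_{s,t,u} = \pi_{t+s,u+\lfloor s/2 \rfloor}(wBP\langle 1 \rangle) \Rightarrow \pi_{s+t,u+\lceil s/2 \rceil}(wBP\langle 1 \rangle^{tC_2})$$
arising from the cellular filtration of $\Sigma^{1,0} \underline{L}^\infty_{-\infty}$. Since $\overline{H}^{**}(wBP\langle 1 \rangle) \cong \overline{A}//E(P_1,P_2)$, the differentials are controlled by the action of $P_1$ and $P_2$ on $H^{**}(\underline{L}^\infty_{-\infty})$; by the identification of $P_i$ with $w_{i-1}$-attaching maps in the $C\tau$-linear setting (cf.~\cite[Example 2.12]{Ghe17b}), these are precisely $w_0$- and $w_1$-attaching differentials. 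Exactly as in Proposition~\ref{wkoeta}, the definition forces $N = \lceil -s+1/2 \rceil$, where $s$ is the cellular filtration degree at which $w_0^i$ is detected on $E^\infty$. Computing the first few filtration degrees $s$ from the analog of Figure~\ref{ahwko} and appealing to the periodicity inherited from $|w_1|=(5,3)$ together with the cellular periodicity of $\underline{L}^\infty_{-\infty}$, one identifies the class detecting $w_0^i$ in Atiyah--Hirzebruch filtration $s$ with $w_1^i[s]$, yielding $w_1^i \in M_{wBP\langle 1 \rangle}(w_0^i)$.

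The main obstacle is verifying that the chain of $P_2$-attachments across $H^{**}(\underline{L}^\infty_{-\infty})$ produces exactly the cellular shifts needed to promote the detecting class of $w_0^i$ to one detecting $w_1^i$, and resolving the hidden extensions that arise. Both points are handled by passing to the inverse limit $C\tau$-linear $\overline{H}$-based Adams spectral sequence and applying change-of-rings to rewrite the $E_2$-page as
$$\lim_{\underset{n}{\longleftarrow}} Ext_{E(P_1,P_2)}(\overline{H}^{**}(\Sigma^{1,0} \underline{L}^\infty_{-n}),\f_2),$$
then running the same filtration-by-cells argument used in Proposition~\ref{buh} and in the splitting of $\pi_{**}(wko^{tC_2})$ to see that this spectral sequence degenerates for tridegree reasons. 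This pins down the extensions in the Atiyah--Hirzebruch $E^\infty$-page and completes the identification of the motivic $wBP\langle 1 \rangle$-Mahowald invariant.
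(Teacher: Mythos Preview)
Your proposal is correct and follows exactly the paper's approach: the paper simply states that the same Atiyah--Hirzebruch spectral sequence argument used for Proposition~\ref{wkoeta}, carried out for $wBP\langle 1 \rangle^{tC_2}$ in place of $wko^{tC_2}$, proves the corollary. Your second paragraph on resolving extensions via the inverse limit $C\tau$-linear Adams spectral sequence is really part of establishing the splitting of $\pi_{**}(wBP\langle 1\rangle^{tC_2})$ (already recorded in the preceding corollary) rather than the Mahowald invariant computation itself, but this redundancy does no harm.
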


\begin{rem2}
As in the Remark ~\ref{vnrmk}, this technique should generalize in view of the conjectured splitting of the motivic $C_2$-Tate construction of $wBP\langle n \rangle$ to prove that for any $n \geq 1$, we have $w^i_n \in M_{wBP\langle n \rangle}(w^i_{n-1})$. 
\end{rem2}

\section{Motivic Mahowald invariants of $2^i$ and $\eta^i$} 

In this section we compute the motivic Mahowald invariants of $2^i$ and $\eta^i$ for all $i \geq 1$. We recover the classical computation of Mahowald-Ravenel by showing that the first element of Adams' $v_1$-periodic family in Adams filtration $i$ is contained in $M(2^i)$, and we show that the first element of Andrews' $w_1$-periodic family in Adams filtration $i$ is contained in $M(\eta^i)$. 

\subsection{Atiyah-Hirzebruch-May names}
If $X$ is a finite cell complex, we will often refer to the Atiyah-Hirzebruch-May names of elements $\alpha \in \pi_{**}(X)$ in the sequel. In this subsection, we establish terminology and notation for this convention. There are three spectral sequences involved in assigning an Atiyah-Hirzebruch-May name, and these depend on whether or not we are in the $C\tau$-linear setting:
\begin{enumerate}
\item In the usual motivic setting, we use the spectral sequences
$$\text{motivic May SS} \rightsquigarrow \text{motivic Adams SS} \rightsquigarrow \text{Atiyah-Hirzebruch SS}.$$
\item In the $C\tau$-linear setting, we use the spectral sequences
$$\text{$C\tau$-linear $\overline{H}$-based May SS} \rightsquigarrow \text{C$\tau$-linear $\overline{H}$-based Adams SS} \rightsquigarrow \text{Atiyah-Hirzebruch SS}.$$
\end{enumerate}

First, suppose we are in the usual motivic setting. Let $\alpha \in \pi_{**}(X)$. Then $\alpha$ is detected by some class $x[(m,n)]$ in the $E^\infty$-page of the Atiyah-Hirzebruch spectral sequence arising from the cellular filtration of $X$. The filtration quotients have the form $\pi_{**}(S^{i,j})$; the notation above indicates that $x$ comes from the filtration quotient $\pi_{**}(S^{m,n})$. Now, $x \in \pi_{**}(S^{m,n}) \cong \pi_{*-m,*-n}(S^{0,0})$ is detected by some class $y$ in the $E_\infty$-page of the motivic Adams spectral sequence. Therefore we can say that $\alpha$ is detected by $y[(i,j)]$. Finally, we can regard $y$ as a class in the $E_2$-page of the motivic Adams spectral sequence $E_2 = Ext^{***}_{A}(\m_2,\m_2)$, so $y$ is detected by some class $z$ in the $E^\infty$-page of the motivic May spectral sequence ~\cite[Section 5]{DI10}. We say that the \emph{Atiyah-Hirzebruch-May name} of $\alpha$ is $z[(m,n)]$. The $E_1$-page of the motivic May spectral sequence is generated over $\f_2$ by classes $\{\tau, h_{ij} : i > 0, j \geq 0\}$. Therefore a typical Atiyah-Hirzebruch-May name will have the form $z[(m,n)]$ where $z$ is a polynomial in the $h_{ij}$'s. Note that if we replace $\pi_{**}(X)$ by $kq_{**}(X)$, this naming procedure is still valid since $A(1)_*$ is a quotient of the dual motivic Steenrod algebra so the motivic May spectral sequence makes sense.

Before defining the Atiyah-Hirzebruch-May name in the $C\tau$-linear setting, we need a $C\tau$-linear $\overline{H}$-based May spectral sequence. 

\begin{lem}
There is a $C\tau$-linear $\overline{H}$-based May spectral sequence with $E_1$-term given by
$$E_1 = \f_2[h_{i,j} : i>0,j\geq 0] \Rightarrow Ext^{s,(b,c)}_{\overline{A}} (\f_2,\f_2).$$
\end{lem}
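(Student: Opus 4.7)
The plan is to imitate the construction of the motivic May spectral sequence of Dugger-Isaksen ~\cite[Section 5]{DI10} in the $C\tau$-linear setting, exploiting the simplification that in $\overline{A}_*$ the motivic relation $\tau_i^2 = \tau \xi_{i+1}$ degenerates to $\tau_i^2 = 0$, so that $\overline{A}_*$ is presented by Proposition ~\ref{ohba} as an honest tensor product of a polynomial and an exterior algebra.

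First I would define a May filtration on $\overline{A}_*$ by assigning positive integer May weights to the Milnor monomials $\prod \xi_i^{2^{j_i}} \cdot \prod \tau_{k_\ell}$. Following the classical convention, set $\mathrm{wt}(\xi_i^{2^j}) = 2i-1$ and $\mathrm{wt}(\tau_k) = 2k+1$, and define the weight of a monomial as the sum of weights of its factors. Dualizing produces a decreasing filtration of $\overline{A}$ and hence an induced filtration on the cobar complex computing $\Ext^{*,(*,*)}_{\overline{A}}(\f_2, \f_2)$, yielding a tri-graded spectral sequence.

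Second, I would identify the $E_1$-page. Using the coproducts of Proposition ~\ref{ohba},
$$\Delta(\xi_n) = \sum_{i=0}^n \xi_{n-i}^{2^i} \otimes \xi_i, \qquad \Delta(\tau_n) = \tau_n \otimes 1 + \sum_{i=0}^n \xi_{n-i}^{2^i} \otimes \tau_i,$$
a direct weight computation shows that the cross terms in both coproducts lie in strictly lower May filtration than the two outer ``primitive-looking'' terms. Consequently each Milnor generator becomes primitive in the associated graded $E_0 \overline{A}_*$, which is therefore an exterior Hopf algebra on primitives $\{\xi_i^{2^j} : i \geq 1, j \geq 0\} \cup \{\tau_k : k \geq 0\}$. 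Dualizing and applying the standard computation of $\Ext$ over a primitively generated exterior Hopf algebra over $\f_2$ yields
$$E_1 \cong \f_2[h_{i,j} : i > 0, j \geq 0],$$
polynomial on the duals of the primitives after a suitable reindexing to combine the $\xi$- and $\tau$-contributions into a single family $h_{i,j}$.

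Finally, since the May filtration is nonnegative, bounded below, and exhaustive, and since only finitely many filtration levels contribute in each tri-degree $(s,b,c)$, the spectral sequence converges strongly to $\Ext^{s,(b,c)}_{\overline{A}}(\f_2, \f_2)$. The main technical obstacle, as in both the classical and motivic cases, is the weight bookkeeping showing that the coproduct cross terms genuinely drop in May filtration; the added convenience in the $C\tau$-linear setting is that the collapse $\tau_i^2 = 0$ eliminates the ``$\tau$-Bockstein''-type decorations that complicate the analogous motivic and $\overline{H}$-Steenrod computations, so once the weights are verified on $\Delta(\xi_n)$ and $\Delta(\tau_n)$ the remainder of the argument is formally identical to Dugger-Isaksen's.
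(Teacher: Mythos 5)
Your proposal is correct and follows essentially the same approach as the paper: both assign the May weights $\mathrm{wt}(\xi_i^{2^j}) = 2i-1$ and $\mathrm{wt}(\tau_k) = 2k+1$, observe that the generators become primitive in the associated graded Hopf algebra, and invoke the standard computation (Ravenel's Theorem 3.2.3 / Lemma 3.1.9, which is also the source for Dugger--Isaksen) to obtain the polynomial $E_1$-page. The only cosmetic difference is that the paper makes the reindexing explicit ($h_{i,0} = \tau_{i-1}$ and $h_{i,j} = \xi_i^{2^{j-1}}$ for $j \geq 1$) and records the $d_1$-differentials, whereas you leave the reindexing implicit and emphasize convergence instead.
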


\begin{proof}
We define an increasing filtration of $\overline{A}_*$ as in ~\cite[Theorem 3.2.3]{Rav86}. Set $|\tau_i| = 2(i+1)-1$ and $|\xi^{2^j}_i| = 2i-1$ and define $h_{i,0} = \tau_{i-1}$ for $i \geq 1$ and $h_{i,j} = \xi^{2^{j-1}}_i$ for $i \geq 1$ and $j \geq 1$. Then each $h_{i,j}$ is primitive in the associated graded algebra under the coproduct described in Proposition ~\ref{ohba}, so we obtain a May spectral sequence with 
$$E_1 = \f_2[h_{ij} : i >0, j \geq 0]$$
by ~\cite[Lemma 3.1.9]{Rav86}. The $d_1$-differentials are given by
\[
d_1(h_{i,j}) = \begin{cases}
h_{i,0} + \sum^{i-1}_{k=0} h_{i-1-k,k+1}h_{i+1,0} \quad &\text{ if } j=0,\\
\sum^i_{k=0} h_{n-i,i+j+1} h_{i,j+1} \quad &\text{ else.}
\end{cases}
\]
\end{proof}

\begin{rem2}
The discussion of the motivic May spectral sequence from ~\cite[Section 5]{DI10} carries over \emph{mutatis mutandis}. We will freely use their results when they can be translated to the $C\tau$-linear setting.
\end{rem2}

Now, suppose we are in the $C\tau$-linear setting. Let $\alpha \in \pi_{**}(X)$. By repeating the procedure from the usual motivic setting above with the $C\tau$-linear $\overline{H}$-based May and Adams spectral sequences, we obtain a well-defined Atiyah-Hirzebruch-May name for $\alpha$. Note that if we replace $\pi_{**}(X)$ by $wko_{**}(X)$, this naming procedure is still valid since $\overline{A}(1)_*$ is a quotient of $\overline{A}_*$.

\subsection{Periodicity operators}
In the sequel, we will use motivic analogs of the homological periodicity operator defined by Adams in ~\cite{Ada66b}. The following is a specialization of ~\cite[Corollary 5.5]{Ada66b} to the case $r=2$. 

\begin{lem}
In the classical Adams spectral sequence, the Massey product
$$P_v(-) := \langle h_3, h^4_0, - \rangle$$
induces an isomorphism
$$Ext^{s,t}_A(\f_2,\f_2) \overset{\cong}{\to} Ext^{s+4, t+12}_A(\f_2,\f_2)$$
when $1 < s < t < \min(4s-2,2+2^{3}+T(s-2))$ where $T(s)$ is the numerical function defined by
$$T(4k) = 12k, \quad T(4k+1) = 12k+2, \quad T(4k+2) = 12k+4, \quad T(4k+3) = 12k+7.$$
This Massey product detects multiplication by $v^4_1$ in the classical stable stems. 
\end{lem}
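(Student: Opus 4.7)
The plan is straightforward: this lemma is essentially an immediate specialization of Adams' general periodicity theorem, so my proof would consist of extracting that specialization and then verifying the standard identification of the resulting Massey product with $v_1^4$-multiplication.

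First, I would recall Corollary 5.5 of ~\cite{Ada66b} in full generality: for each $r \geq 1$ at the prime $2$, Adams produces a Massey product operator $\langle h_{r+1}, h_0^{2^r}, -\rangle$ which is well-defined on $Ext_A^{s,t}(\f_2,\f_2)$ and induces an isomorphism on a range determined by $s$, $t$, and a numerical function depending on $r$. Setting $r = 2$ yields the operator $\langle h_3, h_0^4, -\rangle$. Using $|h_3| = (1,8)$ and $|h_0^4| = (4,4)$ together with the standard bidegree formula for Massey products in $Ext$, I would compute the bidegree shift
$$(s,t) \mapsto (s + 1 + 4 - 1,\, t + 8 + 4) = (s+4, t+12),$$
matching the claim. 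The range condition $1 < s < t < \min(4s-2,\, 2 + 2^3 + T(s-2))$ and the four residue-class formulas for $T(s)$ would then follow by substituting $r = 2$ into Adams' general formulas and separately verifying each of the cases $s \equiv 0,1,2,3 \pmod 4$, which is a routine arithmetic check.

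For the identification of the operator with multiplication by $v_1^4$ in the stable stems, I would invoke Adams' construction in ~\cite{Ada66} of the self-map $v_1^4 : \Sigma^8 V(0) \to V(0)$ together with his analysis of the image of $J$. The Massey product $\langle h_3, h_0^4, -\rangle$ arises as the algebraic shadow of this self-map through the cofiber sequence $S^0 \xrightarrow{2} S^0 \to V(0) \to S^1$: the inner factor $h_0^4$ detects $2^4$, which becomes null after smashing with $V(0)$, allowing a lift through the top cell, while the outer factor $h_3$ detects the attaching map onto which this lift projects. The composition corresponds to iterating the $v_1^4$-self-map once at the $E_2$-page, producing the stated detection.

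The main (and essentially only) obstacle would be carefully matching Adams' conventions for indexing, bigrading, and the precise form of his numerical function with those used in the present paper. Beyond this bookkeeping there is no new mathematical content, which is why the statement is advertised as a specialization rather than an independent result.
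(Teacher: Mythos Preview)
Your proposal is correct and matches the paper's approach exactly: the paper does not give an independent proof but simply states that the lemma is a specialization of \cite[Corollary 5.5]{Ada66b} to the case $r=2$, which is precisely what you carry out. Your additional discussion of the bidegree check and the $v_1^4$-identification via the $V(0)$ self-map is accurate elaboration of what the citation encodes.
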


Some discussion of Massey products in the motivic Adams spectral sequence can be found in ~\cite[Section 4.4]{DI10}. We now state the motivic analog of the previous lemma, which can be proven using Levine's Theorem ~\cite{Lev14} to verify the nontriviality of the relevant Massey products. 

\begin{cor}\label{mv1per}
In the motivic Adams spectral sequence, the Massey product
$$P_v(-) := \langle h_3, h^4_0, - \rangle$$
induces a map
$$Ext^{s,t,u}_A(\m_2,\m_2) \to Ext^{s+4, t+12,u+4}_A(\m_2,\m_2)$$
which is an isomorphism when $1 < s < t < \min(4s-2,2+2^{3}+T(s-2))$ and $u \leq s$. 
\end{cor}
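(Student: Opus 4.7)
The plan is to derive the motivic statement directly from Adams' classical lemma through the comparison of motivic and classical $Ext$-algebras, with Levine's theorem bridging the two. First I would recall that the formalism of Massey products in the motivic Adams spectral sequence, developed in \cite[Section 4.4]{DI10}, parallels the classical case, so one attempts to define $P_v(x) = \langle h_3, h_0^4, x \rangle$ on a motivic class $x \in Ext^{s,t,u}_A(\m_2, \m_2)$ exactly as in the classical setting. To see the bracket is defined we need the relations $h_3 h_0^4 = 0$ and $h_0^4 x = 0$ in motivic $Ext$. Betti realization of \cite{MV99} induces an $h_i$-preserving comparison map from motivic to classical $Ext$, and the classical vanishings in these tridegrees lift to the motivic ones by Levine's theorem \cite{Lev14}.

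Next I would exploit naturality of Massey products with respect to the comparison map. The motivic operation $P_v^{mot}$ and the classical operation $P_v^{cl}$ fit into a commutative square whose vertical maps are the comparison maps. Adams' classical lemma supplies the isomorphism $P_v^{cl}$ in the range $1 < s < t < \min(4s-2, 2+2^3+T(s-2))$, so the problem reduces to showing that the motivic-to-classical comparison is an isomorphism on the $Ext$-tridegrees appearing in the source and target of $P_v^{mot}$.

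For this last step the hypothesis $u \leq s$ becomes essential. Combining Levine's theorem with Isaksen's comparison \cite[Theorem 2.1.12]{Isa14} identifies precisely the region of motivic $Ext$ on which the motivic-to-classical comparison is an isomorphism; the condition $u \leq s$ ensures that both $Ext^{s,t,u}_A(\m_2,\m_2)$ and $Ext^{s+4,t+12,u+4}_A(\m_2,\m_2)$ lie in this isomorphic region for stems in the stated range. Combining the classical isomorphism with these identifications yields the conclusion.

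The main obstacle I anticipate is verifying that the Massey product indeterminacies behave compatibly under the comparison map, so that nontriviality of $P_v^{cl}(x)$ forces nontriviality of $P_v^{mot}(x)$ as a coset rather than merely as a representative. This is exactly where the hint in the paper to use Levine's theorem to certify nontriviality of the motivic Massey products comes in: once we know that the classical $P_v(x)$ survives and the comparison is an isomorphism in the relevant range, any motivic indeterminacy can be traced through the comparison back to classical indeterminacy, which by Adams' lemma does not cover the classical class. Hence $P_v^{mot}$ must induce the claimed isomorphism.
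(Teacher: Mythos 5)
Your overall strategy — transfer Adams' classical lemma to the motivic setting via the classical-to-motivic comparison and the naturality of Massey products — is the same as the paper's one-line sketch, which only says the result "can be proven using Levine's Theorem to verify the nontriviality of the relevant Massey products." Your account of why the Massey product is defined (the motivic relation $h_0^4 h_3 = 0$ coming from the May differential $d_4(b_{20}^2) = h_0^4 h_3$, mirroring the classical one) and of how to push the classical periodicity isomorphism through the comparison is fine in outline.

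However, there is a genuine gap in the step where you explain the hypothesis $u \leq s$. You attribute it to "Isaksen's comparison \cite[Theorem 2.1.12]{Isa14}" identifying "precisely the region of motivic $Ext$ on which the motivic-to-classical comparison is an isomorphism." This is not what that theorem says, and it is not why the weight restriction is needed. Isaksen's Theorem 2.1.12 concerns the \emph{doubling homomorphism} $Sq^k \mapsto Sq^{2k}$ and identifies the classical $Ext_{A_{cl}}(\f_2,\f_2)$ with the subalgebra of $Ext_A(\m_2,\m_2)/\tau$ of Chow weight zero, i.e.\ the single diagonal $s+f-2w = 0$ — not a half-space like $u \leq s$. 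In this paper that theorem is used exclusively for the $w_1$-periodicity operator in Lemma~\ref{hv1per} and Corollary~\ref{mw1per}, where one genuinely wants the rescaled copy of the classical Steenrod algebra sitting inside $\overline{A}$.

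The actual reason for $u \leq s$ is different and is stated in the remark immediately following the corollary in the paper: without it, the target tridegree $Ext^{s+4,t+12,u+4}_A(\m_2,\m_2)$ can contain $\tau$-torsion, $h_1$-local classes (in the sense of ~\cite{GI15}) which are invisible under Betti realization and are \emph{not} in the image of $P_v$. The paper's explicit example is $h_1^6 d_0$, which lies in the Adams $(s,t)$-range of the classical lemma but fails $u \leq s$. So the hypothesis does not carve out an "isomorphism region" of a comparison map; rather, it excludes the exotic $\tau$-torsion classes on which Adams' classical argument gives no information. A correct version of your last step should replace the Isaksen citation with an observation that in the range $u \leq s$ there are no $h_1$-local $\tau$-torsion contributions to either the source or target tridegree, so that the motivic groups in those tridegrees are (up to $\tau$-multiples) in the image of the classical $Ext$ and the classical isomorphism transfers.
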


\begin{rem2}
Alternatively, one can replace $\m_2$ by $H^{**}(C\eta)$ to remove the restriction $u \leq s$. In both cases, the additional condition is required to avoid certain $h_1$-local classes described in ~\cite{GI15}. For example, the class $h^6_1 d_0$ in $Ext^{10,20,14}_A(\m_2,\m_2)$ lies in the $(s,t)$-range stated in the corollary, but is not in the image of $P_v(-)$. We thank Dan Isaksen for pointing out this issue and suggesting a solution. 
\end{rem2}

If $\alpha \in \pi_{**}(S^{0,0})$ is detected by an element $P^k_v(x)$ in the motivic Adams spectral sequence and $x$ detects $\beta \in \pi_{**}(S^{0,0})$, then we will denote $\alpha$ by $P^k_v(\beta)$. 

We also need a version of this periodicity operator for detecting $w^4_1$-periodicity. This arises from the classical periodicity operator via the following lemma.

\begin{lem}\label{hv1per}
The doubling homomorphism 
$$\cd: Ext^{**}_{A_{cl}}(\f_2,\f_2) \to Ext^{***}_{\overline{A}}(\f_2,\f_2)$$
induced by the map $A_{cl} \to \overline{A}$ defined by $Sq^k \mapsto Sq^{2k}$ is an isomorphism onto its image which preserves all higher structure, including products, squaring operations, and Massey products. Here $A_{cl}$ denotes the classical Steenrod algebra.
\end{lem}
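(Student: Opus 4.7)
The plan is to realize the doubling homomorphism as a split inclusion of Hopf algebras, from which injectivity and preservation of all higher structure on $\Ext$ follow formally.

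First I would identify the algebra map $A_{cl} \to \overline{A}$ with an inclusion of sub-Hopf algebras. By Proposition~\ref{ohba}, $\overline{A}_* \cong \f_2[\xi_1, \xi_2, \ldots] \otimes E(\tau_0, \tau_1, \ldots)$, and the coproduct formula $\Delta(\xi_n) = \sum_{i=0}^n \xi_{n-i}^{2^i} \otimes \xi_i$ shows that $B := \f_2[\xi_1, \xi_2, \ldots]$ is a sub-Hopf algebra of $\overline{A}_*$. Since $B$ has exactly the Hopf algebra presentation of the classical dual Steenrod algebra (polynomial generators $\xi_n$ with the Milnor coproduct), there is a canonical Hopf algebra isomorphism $B \cong (A_{cl})_*$, and dualizing the inclusion $\iota \colon B \hookrightarrow \overline{A}_*$ produces an inclusion of Hopf algebras $A_{cl} \hookrightarrow \overline{A}$. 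To confirm this is the doubling map $Sq^k \mapsto Sq^{2k}$, I would compare Milnor-basis degrees: the element of $\overline{A}$ dual to $\xi_1^k$ has bidegree $(2k,k) = |Sq^{2k}|$, and classically $Sq^k$ is dual to $(\xi_1^{cl})^k$, so the inclusion sends $Sq^k$ to $Sq^{2k}$.

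Next, I would exhibit a Hopf-algebra retraction. The algebra quotient $\pi \colon \overline{A}_* \twoheadrightarrow B$ killing every $\tau_i$ is a map of coalgebras because every term of $\Delta(\tau_n) = \tau_n \otimes 1 + \sum_i \xi_{n-i}^{2^i} \otimes \tau_i$ lies in $\ker(\pi) \otimes \overline{A}_* + \overline{A}_* \otimes \ker(\pi)$, and clearly $\pi \circ \iota = \mathrm{id}_B$. Dualizing gives a Hopf-algebraic splitting of $\cd$, which passes to a split inclusion of cobar complexes $\Omega((A_{cl})_*) \hookrightarrow \Omega(\overline{A}_*)$ of differential graded algebras. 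Taking cohomology yields the desired split monomorphism
$$\Ext_{A_{cl}}(\f_2,\f_2) \hookrightarrow \Ext_{\overline{A}}(\f_2,\f_2),$$
which is the ``isomorphism onto image'' claim.

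Finally, the Yoneda product, Massey products, and May's squaring operations on $\Ext$ of a cocommutative Hopf algebra each admit cochain-level constructions on the cobar complex that are natural with respect to Hopf algebra maps. Applying naturality to both $\iota$ and its retraction $\pi$ shows that each is preserved by $\cd$ and detected by the splitting, so $\cd$ respects products, squaring operations, and Massey products as required. I do not anticipate serious difficulty here; the only bookkeeping to watch is the passage from the classical bigrading $(s,t)$ to the $C\tau$-linear trigrading $(s,(2t,t))$ forced by the bidegrees of the $\xi_1^k$.
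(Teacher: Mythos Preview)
Your approach is correct and takes a genuinely different route from the paper. The paper's proof simply invokes \cite[Theorem~2.1.12]{Isa14}, which establishes the doubling isomorphism from $\Ext_{A_{cl}}(\f_2,\f_2)$ onto the Chow-degree-zero subalgebra of the motivic $\Ext_A(\m_2,\m_2)$, and then observes that $\overline{A}$ already has $\tau=0$ so the lemma follows trivially. Your argument is self-contained: you read off from Proposition~\ref{ohba} that $B=\f_2[\xi_1,\xi_2,\ldots]\cong (A_{cl})_*$ sits inside $\overline{A}_*$ as a Hopf-algebra retract, and then use naturality of the cobar construction to get the split inclusion on $\Ext$ together with all higher structure. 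This buys independence from Isaksen's theorem and makes the structure preservation transparent, at the cost of writing out what is essentially hidden inside that citation.

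One point to tidy up: dualizing the \emph{inclusion} $\iota\colon B\hookrightarrow\overline{A}_*$ yields a \emph{surjection} $\overline{A}\twoheadrightarrow A_{cl}$, not an inclusion. It is the dual of your \emph{projection} $\pi$ that gives the map $A_{cl}\to\overline{A}$ sending $Sq^k\mapsto Sq^{2k}$ (since $\pi^*(Sq^k)$ is the Milnor-dual of $\xi_1^k\in\overline{A}_*$, which has bidegree $(2k,k)$). This does not damage your argument: the map $\Ext_{A_{cl}}\to\Ext_{\overline{A}}$ you want is the one induced on cobar complexes by $\iota$, and $\pi$ provides the retraction exactly as you say. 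Just swap which of $\iota,\pi$ you name as dual to $\cd$.
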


\begin{proof}
We need some results from ~\cite[Section 2.1.3]{Isa14}. Let $A'$ be the subquotient $\m_2$-algebra of $A$ generated by $Sq^{2k}$ for all $k\geq0$, subject to the relation $\tau=0$. Then the doubling homomorphism defined as above is an isomorphism $A_{cl} \to A'$. By ~\cite[Theorem 2.1.12]{Isa14}, this induces an isomorphism between the classical Adams $E_2$-page $Ext_{A_{cl}}(\f_2,\f_2)$ and the subalgebra of the motivic Adams $E_2$-page $Ext_{A}(\m_2,\m_2)$ consisting of elements in degrees $(s,f,w)$ such that $s+f-2w=0$. Here, $s$ is the stem, $f$ is the Adams filtration, and $w$ is the motivic weight. This isomorphism preserves all higher structure. 

Since the relation $\tau=0$ is already imposed on $\overline{A}$, our lemma follows trivially from the previous remarks.
\end{proof}

\begin{rem2}~\label{bop}
By the proof of ~\cite[Theorem 2.1.12]{Isa14}, the above map can be described using May names by
$$h_{i,j-1} \mapsto h_{i,j}.$$
\end{rem2}

\begin{cor}\label{mw1per}
In the $C\tau$-linear $\overline{H}$-based Adams spectral sequence, the Massey product
$$g(-) := \langle h_4, h^4_1, - \rangle$$
induces a map
$$Ext^{s,t,u}_{\overline{A}}(\f_2,\f_2) \overset{\cong}{\to} Ext^{s+4, t+20,u+12}_{\overline{A}}(\f_2,\f_2).$$
If $P_v(x)$ is defined with no indeterminacy in the classical Adams spectral sequence, then $g(\cd(x))$ is defined with no indeterminacy in the $C\tau$-linear $\overline{H}$-based Adams spectral sequence.
\end{cor}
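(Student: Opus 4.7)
The strategy is to deduce the corollary from the classical $v_1^4$-periodicity isomorphism of Corollary~\ref{mv1per} by transport across the doubling homomorphism $\cd$ of Lemma~\ref{hv1per}. The key observation is that $g$ is literally the image of $P_v$ under $\cd$.

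First I would verify the identity $\cd \circ P_v = g \circ \cd$ as maps from classical $Ext$ into $C\tau$-linear $Ext$. By Lemma~\ref{hv1per}, $\cd$ is a ring homomorphism that preserves Massey products, so the identity reduces to computing $\cd$ on the outer terms of the bracket. Using Remark~\ref{bop}, doubling sends the May generator $h_{i,j-1}$ to $h_{i,j}$; translating to Adams indexing carries classical $h_0 = h_{1,0}$ to $C\tau$-linear $h_1 = h_{1,1}$, and classical $h_3 = h_{1,3}$ to $C\tau$-linear $h_4 = h_{1,4}$. Hence
$$\cd\!\bigl(\langle h_3, h_0^4, x\rangle\bigr) = \langle h_4, h_1^4, \cd(x)\rangle = g(\cd(x)).$$
Combining this with Corollary~\ref{mv1per}, which says $P_v$ is an isomorphism in the appropriate range, and with the injectivity of $\cd$, one sees that $g$ is an isomorphism on the image of $\cd$ inside $Ext^{***}_{\overline{A}}(\f_2, \f_2)$. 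For the indeterminacy claim, multiplicativity of $\cd$ implies that $\cd$ carries the classical indeterminacy of $P_v(x)$ onto the portion of the indeterminacy of $g(\cd(x))$ lying in the image of $\cd$; if the former vanishes, so does the latter on this portion.

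To promote these conclusions from the image of $\cd$ to all of $Ext^{***}_{\overline{A}}(\f_2, \f_2)$, one needs $\cd$ to be surjective in the relevant tridegrees, which is the main obstacle. Because $\overline{A}_*$ carries the exterior generators $\tau_i$ absent from the classical dual Steenrod algebra (Proposition~\ref{ohba}), $\cd$ is not obviously surjective onto $Ext^{***}_{\overline{A}}$. I expect this to be resolved either by a change-of-rings argument splitting off the exterior $\tau_i$-factors, or by appealing to the equivalence of~\cite{GWXPP} to identify $C\tau$-linear $Ext$ with the mod $2$ Adams--Novikov $E_2$-term and verifying surjectivity directly in the tridegrees of interest. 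Once surjectivity is in hand, both the isomorphism and the no-indeterminacy statements descend from the classical ones by naturality of $\cd$.
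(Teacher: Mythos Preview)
Your approach is exactly the paper's intended argument: the result is stated as an immediate corollary of Lemma~\ref{hv1per}, and the identity $\cd \circ P_v = g \circ \cd$ is the whole point, following from preservation of Massey products together with $\cd(h_j) = h_{j+1}$ (Remark~\ref{bop}).

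The surjectivity concern you raise is legitimate and the paper does not address it, but the resolution is more elementary than the change-of-rings or \cite{GWXPP} arguments you propose. Grade $Ext_{\overline{A}}$ by Chow degree $t - 2u$. On the $C\tau$-linear May $E_1$-page $\f_2[h_{i,j}]$, the bidegrees of Proposition~\ref{ohba} give $h_{i,j}$ Chow degree $0$ for $j \geq 1$ and Chow degree $1$ for $j = 0$; since May differentials preserve tridegree, the Chow-degree-zero part $\f_2[h_{i,j} : j \geq 1]$ is a direct summand of the spectral sequence which $\cd$ identifies with the classical May spectral sequence. Thus $\cd$ is an isomorphism onto the Chow-degree-zero part of $Ext_{\overline{A}}$, not merely an injection. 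Now $h_1$, $h_4$, and $\cd(x)$ all have Chow degree zero, so the bracket $g(\cd(x))$ lands in Chow degree zero, and a direct check shows that the two groups contributing to its indeterminacy also sit in Chow degree zero. Hence the entire indeterminacy lies in the image of $\cd$, and the classical vanishing transports directly. Note that the unrestricted isomorphism assertion in the displayed arrow is stronger than what this argument (or the paper) actually establishes or uses; what is needed later is only the statement on the image of $\cd$, and that follows once Chow-degree-zero surjectivity is in hand.
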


We note that the Massey product $g(-)$ detects multiplication by $g$ in the $C\tau$-linear $\overline{H}$-based Adams spsectral sequence. We will discuss the fate of the classes in the $C\tau$-linear $\overline{H}$-based Adams spectral sequence arising from iterating this Massey product when we compute the motivic Mahowald invariant of $\eta^i$.

\subsection{Motivic Mahowald invariant of $2^i$}
We now compute $M(2^i)$. We will state our result in terms of iterated Toda brackets; we verify the nontriviality of these Toda brackets in the following lemma. 

\begin{lem}
The iterated Massey products $P^i_v(x)$ for $x = 8\sigma, \eta, \eta^2, \eta^3$ are permanent cycles in the motivic Adams spectral sequence. Moreover, the Betti realizations of the classes which they detect in $\pi_{**}(S^{0,0})$  are the classes in $\pi_{*}(S^{0})$ detected by the iterated Massey products $P^i(x)$ for $x = 8\sigma, \eta, \eta^2, \eta^3$, where $P(-)$ is the classical Adams periodicity operator.
\end{lem}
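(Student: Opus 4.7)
The plan is to combine Corollary \ref{mv1per}, Betti realization, and Levine's theorem \cite{Lev14} to reduce the statement to the classical fact of Adams \cite{Ada66} that each $P^i(x)$ is a permanent cycle detecting an image-of-$J$ element.

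First, I would verify that the iterated Massey products $P_v^i(x)$ are unambiguously defined nonzero classes in the motivic Adams $E_2$-page. Each base class is detected by an explicit May-name polynomial, namely $h_0^3 h_3$, $h_1$, $h_1^2$, and $h_1^3$ for $x = 8\sigma, \eta, \eta^2, \eta^3$ respectively, and in each case the tridegree $(s, t, u)$ satisfies $u \leq s$. Iterating $P_v$ shifts the tridegree by $(4,12,4)$, and a routine numerical check shows that the constraint $1 < s < t < \min(4s-2, 2+2^3+T(s-2))$ of Corollary \ref{mv1per} is preserved under this shift, so the iterated Massey products produce unambiguous nonzero classes $P_v^i(x) \in Ext^{***}_A(\m_2,\m_2)$.

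Next, I would invoke the Betti realization functor $Re_\c$, which induces a map from the motivic Adams spectral sequence to the classical Adams spectral sequence. Under the identification of the motivic May generators with their classical counterparts recorded in \cite{DI10}, this map is compatible with Massey products on $E_2$-pages, so $Re_\c(P_v^i(x)) = P^i(Re_\c(x)) = P^i(x)$ classically. Adams proves that each $P^i(x)$ is a permanent cycle detecting a specific class $\alpha_i \in \pi_*(S^0)$ in the image of $J$. By Levine's theorem, this classical class lifts uniquely to a motivic class $\tilde\alpha_i \in \pi_{**}(S^{0,0})$ in the tridegree dictated by the shift.

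Finally, I would argue that the motivic Adams $E_\infty$-class detecting $\tilde\alpha_i$ is exactly $P_v^i(x)$. By construction this detecting class Betti realizes to the class detecting $\alpha_i$, and in the stem-equals-twice-weight slope along which the relevant elements lie, Betti realization restricts to an isomorphism onto the classical $E_2$-page (in the spirit of \cite[Theorem 2.1.12]{Isa14}), so up to classes of higher Adams filtration the detecting class must equal $P_v^i(x)$; competing $\tau$-multiples in the same tridegree are ruled out by direct inspection of the motivic $E_2$-page using \cite{Isa14}. Hence $P_v^i(x)$ is a permanent cycle detecting $\tilde\alpha_i$, and the second part of the statement is immediate from the compatibility $Re_\c(P_v^i(x)) = P^i(x)$. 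I expect the main obstacle to be the last uniqueness step, as it requires a careful understanding of the $\tau$-torsion structure of $Ext^{***}_A(\m_2,\m_2)$ in the ranges of interest.
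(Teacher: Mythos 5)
Your proposal takes essentially the same approach as the paper: reduce the motivic claim to Adams' classical result via Betti realization, using the fact that Betti realization preserves Massey products and sends the motivic May generators $h_{ij}$ to their classical counterparts. The paper's proof is considerably terser — it simply asserts that since the Betti realizations $P^i(x)$ are permanent cycles, so are the $P^i_v(x)$ — whereas you supply the missing justification by invoking Levine's theorem to lift the detected homotopy class and then arguing uniqueness of the detecting $E_\infty$-class. This fills a genuine gap: the bare statement that "images are permanent cycles implies the classes are permanent cycles" requires some form of injectivity of $Re_\c$ on the relevant bidegrees, which the paper does not address.

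One imprecision in your uniqueness step: you claim the classes $P^i_v(x)$ lie on the "stem-equals-twice-weight slope" where Isaksen's Theorem 2.1.12 gives an isomorphism with the classical $E_2$-page, but this is not so. That theorem concerns the Chow degree zero locus $s + f - 2w = 0$ (stem plus Adams filtration minus twice the weight), and applying $P_v$ shifts the Chow degree by $+4$ (stem $+8$, filtration $+4$, weight $+4$), so already $P_v(h_1)$ has Chow degree $4$, and $h^3_0 h_3$ itself has Chow degree $3$. The correct version of the needed statement is that the $\tau$-free part of the motivic $E_2$-page injects under $Re_\c$, and that the potential differential targets in the relevant bidegrees are $\tau$-free; this can be verified by inspection of Isaksen's charts as you suggest, but it is not a direct appeal to the Chow-degree-zero isomorphism. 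With that correction, the argument is sound and is a reasonable elaboration of the paper's sketch.
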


\begin{proof}
The iterated Massey products $P^i_v(x)$ are non-vanishing since their classical analogs are non-vanishing by ~\cite{Ada66b}. Denote these classical analogs by $P^i(x)$. The classes $P^i(x)$ are permanent cycles in the classical Adams spectral sequence by ~\cite{Ada66}. Betti realization induces a map of spectral sequences from the motivic Adams spectral sequence to the classical Adams spectral sequence which sends $h_{ij} \mapsto h_{ij}$ and therefore carries $P^i_v(x)$ to $P^i(x)$. Therefore the images of $P^i_v(x)$ are permanent cycles, so the classes $P^i_v(x)$ are permanent cycles. 
\end{proof}

\begin{thm}\label{mm2i}
Let $i \geq 1$. The motivic Mahowald invariant of $2^i$ is given by
\[ M(2^i) \ni \begin{cases}
P^{\lfloor i/4 \rfloor}_v(h^3_0 h_3) \quad & i \equiv 0 \mod 4 ,\\
P^{\lfloor i/4 \rfloor}_v(h_1) & i \equiv 1 \mod 4,\\
P^{\lfloor i/4 \rfloor}_v(h_1^2) & i \equiv 2 \mod 4,\\
P^{\lfloor i/4 \rfloor}_v(h_1^3) & i \equiv 3 \mod 4.
\end{cases}
\]
Here we are denoting nontrivial Toda brackets in $\pi_{**}(S^{0,0})$ by the Massey products which detect them.
\end{thm}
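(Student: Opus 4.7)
The plan is to prove the result by induction on $i$, combining Proposition~\ref{kq2} (the $kq$-approximation $M_{kq}(2^i)$), Proposition~\ref{lift} (the lift from $E$-Mahowald invariants to Mahowald invariants), the $v_1^4$-periodicity Massey product $P_v(-) = \langle h_3, h_0^4, -\rangle$ of Corollary~\ref{mv1per}, and Betti realization together with Mahowald--Ravenel's classical identification of $M^{cl}(2^i)$.

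First I would dispatch the base cases $1 \leq i \leq 4$. For $i = 1, 2$ the classes $\eta, \eta^2$ from Proposition~\ref{kq2} lie in the image of $\pi_{**}(S^{0,0}) \to \pi_{**}(kq)$, so Proposition~\ref{lift} immediately yields $M(2) \ni h_1$ and $M(4) \ni h_1^2$. For $i = 3, 4$ the $kq$-classes $\alpha$ and $\beta$ are not in the Hurewicz image, so I would pass to the finer approximations $M_{kq}(2^3; V(0))$ and $M_{kq}(2^4; \underline{L}^1_{-2})$ obtained from Definitions~\ref{mibx} and~\ref{miebx}. Using Lemma~\ref{dam} and the Atiyah-Hirzebruch differentials recorded in Figure~\ref{helpme} to identify the detecting Atiyah-Hirzebruch-May names as $h_1^3$ and $h_0^3 h_3$ in the correct tridegrees, Proposition~\ref{lift} then gives $M(8) \ni h_1^3$ and $M(16) \ni h_0^3 h_3$.

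For the inductive step, write $i = 4a + b$ with $a \geq 1$ and $0 \leq b \leq 3$, so that Proposition~\ref{kq2} gives $M_{kq}(2^i) \ni \beta^a c$ with $c$ one of $1, \eta, \eta^2, \alpha$ according to $b$. The class $\beta \in \pi_{8,4}(kq)$ acts as the $v_1^4$-periodicity operator on the $E_\infty$-page of the motivic Adams spectral sequence for $kq$, and under the map induced by $S^{0,0} \to kq$ this corresponds to the Massey product $P_v(-)$ on the motivic Adams $E_\infty$-page for the sphere (Corollary~\ref{mv1per}). Combined with the refined approximations based on $V(0)$ or $\underline{L}^1_{-2}$ (required exactly when $b = 3$ or $b = 0$, analogously to the base cases), this argument shows that $M(2^i)$ is detected by $P_v^a$ applied to the base-case Atiyah-Hirzebruch-May name. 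The preceding lemma guarantees that $P_v^a$ of each base class is a permanent cycle in the motivic Adams spectral sequence.

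The main obstacle will be resolving the ambiguity inherent in the approximation. The $kq$-computation pins down only the Atiyah-Hirzebruch filtration of the detecting class, and even after using the finer $V(0)$ or $\underline{L}^1_{-2}$ approximations there may be several classes of $\pi_{**}(S^{0,0})$ in the relevant tridegree. To disambiguate, I would apply Betti realization and invoke Levine's theorem together with Mahowald--Ravenel's classical computation of $M^{cl}(2^i)$ to force the motivic lift to agree with $P_v^a$ of the base class on the nose. Corollary~\ref{mv1per} ensures that the iterated Massey product $P_v^a(-)$ has trivial indeterminacy in the required tridegree range, closing the induction.
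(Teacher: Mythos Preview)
Your overall architecture is close to the paper's --- in particular, splitting into $i\equiv 1,2\pmod 4$ (where $\beta^a\eta,\beta^a\eta^2$ lie in the $kq$-Hurewicz image, so Proposition~\ref{lift} applies directly for \emph{all} $a$, not just the base case) versus $i\equiv 0,3\pmod 4$ (where one must pass through a $V(0)$-based approximation) is exactly right. But you have underestimated and left undone the central technical step, and a couple of your proposed shortcuts do not work.

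For $i\equiv 0,3\pmod 4$, the real content is computing the inverse image under the Hurewicz map $h\colon \pi_{**}(V(0))\to\pi_{**}(kq\wedge V(0))$ of the classes $\beta^k[(0,0)]$ and $\beta^k\alpha[(0,0)]$. You do not explain how to do this. Figure~\ref{helpme} is the Atiyah--Hirzebruch spectral sequence for the \emph{sphere} smashed with $\Sigma^{1,0}\underline{L}^\infty_{-\infty}$, in the range $-4\le s\le 0$; it is neither the right spectral sequence nor large enough to identify the $kq\wedge V(0)$ names you need (already $i=4$ lives at $s=-8$). The paper instead argues in the motivic May spectral sequence: the differential $d_4(b_{20}^2)=h_0^4h_3$ forces the single Adams-SS class $b_{20}^2[(0,0)]+h_0^3h_3[(1,0)]$ in $\pi_{**}(V(0))$, which detects $8\sigma[(1,0)]$ on the sphere side and maps under $h$ to $\beta[(0,0)]$ on the $kq$-side (and similarly $d_2(h_0b_{20})=h_0^3h_2$ handles $\alpha$). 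The induction to higher $k$ is then run via a commuting cube of Ext groups comparing classical and motivic, sphere and $ko/kq$, with horizontal maps $P_v$ and $\cdot b_{20}^2$; the classical face is known, the vertical maps identify May names, and this pins down $h^{-1}(\beta^k[(0,0)])=P_v^{k-1}(8\sigma)[(1,0)]$ exactly. Only $V(0)$ is needed for both congruences; your suggestion to use $\underline{L}^1_{-2}$ for $i\equiv 0$ is unnecessary and belongs to the $\eta^i$ computation.

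Your proposed disambiguation by Betti realization and the classical Mahowald--Ravenel result is not what the paper does, and by itself it is not enough: to apply Proposition~\ref{lift} you must actually \emph{produce} a class $y\in\pi_{**}(S^{0,0})$ lifting the $kq\wedge V(0)$ class and making the square commute, not merely know which bidegree it should occupy. Betti realization constrains the target but does not construct the lift; the May-differential argument above is precisely what supplies it.
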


\begin{proof}
Nontriviality of the relevant Massey products was proven in the previous lemma. We break the proof apart into two cases depending on the congruence of $i \mod 4$.

($i \equiv 1,2 \mod 4$) The elements $\beta^k\eta,\beta^k\eta^2 \in \pi_{**}(kq)$ are in the Hurewicz image for $kq$. Their inverse images are $P^k(\eta), P^k(\eta^2) \in \pi_{**}(S^{0,0})$, so this case is clear by Proposition ~\ref{lift} applied to Proposition ~\ref{kq2}. 

($i \equiv 0,3 \mod 4$) The elements $\alpha, \beta \in \pi_{**}(kq)$ are not in the Hurewicz image, so we cannot immediately employ Proposition ~\ref{lift}. Instead, we must pass through the motivic Mahowald invariant based on the mod $2$ Moore spectrum $V(0)$. We obtain the theorem through the following series of approximations:
$$M_{kq}(2^i) \rightsquigarrow M_{kq}(2^i;V(0)) \rightsquigarrow M(2^i;V(0)) \rightsquigarrow M(2^i).$$
\begin{enumerate}
\item By Proposition ~\ref{kq2}, we have computed $M_{kq}(2^i)$
\item By the proof of Proposition ~\ref{kq2}, the coset $M_{kq}(2^i)$ is detected on the cell of $\Sigma^{1,0} \underline{L}^\infty_{-\infty}$ with topological degree $f(i)$ where for each $k \geq 0$ we define
$$ f(4k) = -8k , \quad \quad f(4k+3) = -4-8k.$$
The topological degree of the cell determines where in the filtration of $\underline{L}^\infty_{-\infty}$ by $V(0)$ the coset $M_{kq}(2^i;V(0))$ is detected, i.e. it determines the $N$ in Definition ~\ref{miebx}. We obtain the following table of values:
\begin{center}
\begin{tabular}{ c c c c }
$x$ & s & N & $M_{kq}(x;V(0))$\\
$2^{4k}$ & $-8k$ & $1+4k$   & $\beta^{k}[(0,0)]$\\
 $2^{4k+3}$ & $-4-8k$ & $3+4k$ & $\beta^k \alpha[(0,0)]$.
\end{tabular}
\end{center}
To justify that $\beta^k \alpha^\epsilon[(0,0)]$, $\epsilon \in \{0,1\}$, are the correct Atiyah-Hirzebruch-May names for the classes above, it suffices to note that $\beta^k \alpha^\epsilon \in kq_{**}$ is not in the image of multiplication by $2$ for any $k, \epsilon$. 

\item Let $h : S^{0,0} \wedge V(0) \to kq \wedge V(0)$ be the Hurewicz map smashed with $id_{V(0)}$. We will prove the following by induction on $k$:
\begin{center}
\begin{tabular}{ c c  }
$x$ & $h^{-1}(x)$\\
$\beta^{k}[(0,0)]$ & $P^{k-1}_v(8\sigma)[(1,0)]$\\
$\beta^k \alpha[(0,0)]$ & $P^k_v(\eta^3)[(1,0)].$ 
\end{tabular}
\end{center}

We start with the case $x = \beta^k[(0,0)]$, $k=1$. Recall that $V(0)$ is defined by the cofiber sequence
$$S^{0,0} \overset{\cdot 2}{\to} S^{0,0} \to V(0),$$
and this cofiber sequence gives rise to a long exact sequence in homotopy
$$\cdots \to \pi_{**}(S^{0,0}) \overset{f_*}{\to} \pi_{**}(S^{0,0}) \overset{j_*}{\to} \pi_{**}(V(0)) \overset{\delta}{\to} \pi_{*-1,*}(S^{0,0}) \to \cdots$$
where $f_*$ is induced by $\cdot 2$. There is a motivic May differential  $d_4(b^2_{20}) = h^4_0h_3$ which produces the relation $2 \cdot (8\sigma) = 0$ in the motivic Adams spectral sequence converging to $\pi_{**}(S^{0,0})$ \cite[Section 5.3]{DI10}. Therefore $8\sigma \in ker(f_*) = im(\delta_*)$, so the Atiyah-Hirzebruch-May name of $\delta^{-1}(8\sigma) \in \pi_{**}(V(0))$ is $h^3_0h_3[(1,0)]$. If we smash the above cofiber sequence with $kq$ and take homotopy groups, the class $h^3_0 h_3$ is trivial since $h_3$ does not appear in the motivic May spectral sequence converging to $Ext_{A(1)}(\m_2,\m_2)$. Therefore the class $h^3_0 h_3[(1,0)]$ detects zero in $\pi_{**}(kq \wedge V(0))$.  Note that the above May differential implies that $b^2_{20}$ detects zero in $\pi_{**}(S^{0,0})$, so $b^2_{20}[(0,0)]$ detects zero in $\pi_{**}(V(0))$. 

On the other hand, the class $b^2_{20}$ detects $\beta \in \pi_{**}(kq)$ by the motivic analog of the classical argument that $b^2_{20}$ detects $\beta \in \pi_*(ko)$. Smashing the cofiber sequence defining $V(0)$ with $kq$ and applying homotopy produces another long exact sequence
$$\cdots \to kq_{**} \overset{f_*}{\to} kq_{**} \overset{j_*}{\to} kq_{**}(V(0)) \overset{\delta}{\to} kq_{*-1,*} \to \cdots.$$
Since $\alpha \neq 2y$ for any $y \in \pi_{**}(kq)$, we have that $\alpha \notin im(f_*) = ker(j_*)$. Therefore the Atiyah-Hirzebruch-May name of $j_*(\alpha) \in \pi_{**}(kq \wedge V(0))$ is $b^2_{20}[(0,0)]$. 

By the previous two paragraphs, we see that the class $b^2_{20}[(0,0)] + h^3_0h_3[(1,0)]$ detects $8\sigma[(1,0)] \in \pi_{**}(V(0))$ and detects $\beta[(1,0)] \in \pi_{**}(V(0) \wedge kq)$. Therefore we have $h^{-1}(\beta[(0,0)] = 8\sigma[(1,0)].$ This completes the base case of the induction.

Now suppose that we have shown that 
$$h^{-1}(\beta^i[(0,0)]) = P^{i-1}_v(8\sigma)[(1,0)]$$
for all $i < n$. We can reformulate the induction hypothesis using the motivic Adams spectral sequence as follows: for $i < n$, the class in $\pi_{**}(V(0))$ detected by
$$b^{2i}_{20}[(0,0)] + P^{i-1}_v(h^3_0h_3)[(1,0)]$$
maps under $h$ to the class in $\pi_{**}(kq \wedge V(0))$ detected by $b^{2i}_{20}[(0,0)]$.

To see that this reformulation implies the original induction hypothesis, we need to show that $b^{2i}_{20}[(0,0)]$ detects zero in $\pi_{**}(V(0))$. Using the algebraic squaring operations described in ~\cite[Section 5]{DI10}, we can produce motivic May differentials
$$d_{2^k}(b^{2^k}_{20}) = d_{2^k}(Sq^{2^k}(b^{2^{k-1}}_{20})) = h^{2^k}_0 h_{1+k}$$
with $k \geq 2$. By the Leibniz rule, we can obtain nontrivial May differentials on $b^{2i}_{20}$ for all $i \geq 1$. These elements could support shorter differentials, but in any case we have shown that $b^{2i}_{20}([0,0])$ detects zero in $\pi_{**}(V(0))$. Therefore the first class above detects $P^{i-1}_v(8\sigma)[(1,0)]$ in $\pi_{**}(V(0))$. Since $b^{2i}_{20}[(0,0)]$ detects $\beta^i[(0,0)]$ in $\pi_{**}(kq \wedge M(2))$, we have shown that the reformulation implies the original induction hypothesis.

To complete the induction, consider the diagram
\[
\begin{tikzcd}[row sep={30,between origins}, column sep={75,between origins}]
      & Ext^{***}_A(V(0)) \ar{rr}{P_v}\ar{dl} & & Ext_A^{*+4,*+8,*+4}(V(0)) \ar{dl} \\
   Ext_A^{***}(kq\wedge V(0)) \arrow[crossing over]{rr}{\cdot b^2_{20}}  & & Ext_A^{*+4,*+8,*+4}(kq\wedge V(0)) \\
      &Ext_{A_{cl}}^{**} (V(0) ) \ar{uu} \ar{rr}{P_v} \ar{dl} & &  Ext_{A_{cl}}^{*+4,*+8}(V(0) ) \ar{dl} \ar{uu} \\
    Ext_{A_{cl}}^{**}(ko \wedge V(0)) \ar{rr}{\cdot b^2_{20}} \ar{uu} && Ext_{A_{cl}}^{*+4,*+8} (ko \wedge V(0)). \ar{uu}
\end{tikzcd}
\]
The vertical maps are defined by sending a class with Atiyah-Hirzebruch-May name $h_{ij}[(m)]$ to the element $h_{ij}[(m,0)]$, and similarly for elements detected by products.

In the bottom face of the diagram, consider the class $b^{2n-2}_{20}[(0)] + P^{n-2}_v(h^3_0h_3)[(1)] \in Ext_{A_{cl}}^{**}(V(0))$ which detects $P^{n-2}_v(8\sigma)[(1)]$. Its image in $Ext_{A_{cl}}^{**}(ko \wedge V(0))$ is $b^{2n-2}_{20}[(0)]$ which detects $\beta^{n-1}[(0)]$, its image in $Ext_{A_{cl}}^{*+4*+8}(V(0))$ is $b^{2n}_{20}[(0)] + P^{n-1}_v(h^3_0h_3)[(1)]$ which detects $P^{n-1}_v(8\sigma)[(1)]$, and its image in $Ext_{A_{cl}}^{*+4,*+8}(ko \wedge V(0))$ is $b^{2n}_{20}[0]$ which detects $\beta^n[(0)]$. Indeed, since the Adams periodicity operator $P_v$ is well-defined with no indeterminacy for $P^j_v(h^3_0 h_3)$ for all $j \geq 0$, we can precisely determine which elements in homotopy are detected by these Massey products by sparseness in the image of $J$ ~\cite{Ada66}. 

Now consider the element $P^{n-2}_v(h^3_0 h_3)[(1,0)] \in Ext_A^{***}(V(0))$ in the top face of the cube. By the induction hypothesis, its image in $Ext_A^{***}(kq \wedge V(0))$ is $b_{20}^{2(n-1)}[(0,0)]$. Its inverse image under the vertical map is $P^{n-2}_v(h^3_0 h_3)[(1)]$. We can calculate the images of $P^{n-2}_v(h^3_0 h_3)[(1,0)]$ in the top face using the vertical maps and the previous paragraph. In particular, we conclude that $P^{n-1}_v(h^3_0 h_3)[(1,0)] \in Ext_A^{*+4,*+8,*+4}(V(0))$ maps to $b_{20}^{2n}[(0,0)] \in Ext_A^{*+4,*+8,*+4}(kq \wedge V(0))$. This completes the induction step, so we have proven the values in the table for $x= \beta^k[(0,0)]$. 

The computation of $h^{-1}(\beta^k\alpha[(0,0)])$ is completely analogous. Starting with the differential
$$d_2(h_0b_{20}) = h^3_0h_2$$
which produces the relation $2 \cdot (\eta^3)$, we see that the class
$$h_0b_{20}[(0,0)] + h^3_1[(1,0)]$$
detects $\eta^3[(1,0)] \in \pi_{**}(V(0))$ and detects $\alpha \in \pi_{**}(kq \wedge V(0))$, which shows that $h^{-1}(\alpha[(0,0)]) = \eta^3[(1,0)]$. The same argument using Massey products, algebraic squaring operations, and the commutative cubical diagram completes the induction. 

We now apply the obvious analog of Proposition ~\ref{lift} for the motivic Mahowald invariant based on a finite complex to obtain the following:
\begin{center}
\begin{tabular}{ c c }
$x$ & $M(x;V(0))$\\
$2^{4k}$ & $P^{k-1}_v(8\sigma)[(1,0)]$ \\
 $2^{4k+3}$ & $P^k_v(\eta^3)[(1,0)]$
\end{tabular}
\end{center}

\item Since we know which cell of $\underline{L}^\infty_{-\infty}$ the cosets $M(2^i;V(0))$ were detected on, we obtain the desired refinement to $M(2^i)$.
\end{enumerate}
\end{proof}

\subsection{Motivic Mahowald invariant of $\eta^i$} 

We now compute $M(\eta^i)$. As in the previous subsection, we will state our result in terms of iterarted Toda brackets. We begin by verifying the nontriviality of these Toda brackets; in particular, we show that they detect certain infinite $w^4_1$-periodic families constructed by Andrews in ~\cite{And14}. We begin by recalling some essential information about these families.

By ~\cite[Theorem 3.4]{And14}, the finite complex $C\eta$ admits a non-nilpotent $w^4_1$-self map 
$$\Sigma^{20,12} C\eta \to C\eta.$$
Using the fact that $\nu, \nu^2,\nu^3,i \in \pi_{**}(S^{0,0})$ lift to classes in $\pi_{**}(C\eta)$, one obtains infinite families of maps $P^n(\nu), P^n(\nu^2), P^n(\nu^3), P^n(i) \in \pi_{**}(S^{0,0})$ by composing with
$$C\eta \overset{w^{4n}_1}{\longrightarrow} \Sigma^{-20n,-12n} C\eta \overset{\text{pinch}}{\longrightarrow} S^{2-20n,1-12n}.$$
By ~\cite[Theorem 3.12]{And14}, these compositions are nontrivial for all $n \geq 0$. We note that $P(i) = \eta^2 \eta_4$.  

To prove nontriviality of these composites, Andrews uses detection maps from the $E_2$-page of certain motivic Adams-Novikov spectral sequences to the $E_2$-page of certain classical Adams spectral sequences ~\cite[Definition 2.8]{And14}. For example, he considers the map
$$d : Ext^{***}_{BPGL_{**}BPGL}(BPGL_{**},BPGL_{**}) \to Ext^{**}_{A_{cl}}(\f_2,\f_2)$$
defined by $\tau \mapsto 0$, $v_n \mapsto 0$, and $t_n \mapsto \zeta_n$. This is a graded map if one defines the degree of an element in the source to be its motivic weight. He uses the detection maps to infer the nontriviality of the classes $P^n(x)$ in $Ext^{***}_{BPGL_{**}BPGL}(BPGL_{**},BPGL_{**})$ by showing that their images are the infinite families constructed by Adams in ~\cite{Ada66b}. 

\begin{lem}
The iterated Massey products $g^i(x)$ for $x = \eta^2\eta_4, \nu,\nu^2,\nu^3$ are permanent cycles in the $C\tau$-linear $\overline{H}$-based Adams spectral sequence. Moreover, they detect the $w^4_1$-periodic families $P^i(x)$ discussed above.
\end{lem}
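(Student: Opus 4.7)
The plan is to use the doubling homomorphism $\cd$ of Lemma~\ref{hv1per} to transfer Adams's classical result to the $C\tau$-linear setting. By Corollary~\ref{mw1per}, $\cd$ intertwines $P_v(-)$ with $g(-)$, so $g^i(\cd(y)) = \cd(P_v^i(y))$ whenever the brackets are defined. Using Remark~\ref{bop}, which gives $\cd: h_{i,j-1} \mapsto h_{i,j}$ on May generators, one has $\cd(h_1) = h_2$, $\cd(h_1^k) = h_2^k$, and $\cd(h_0^3 h_3) = h_1^3 h_4$. I would then identify these images with classes detecting $\nu, \nu^2, \nu^3, \eta^2\eta_4$ in the $C\tau$-linear $\overline{H}$-based Adams spectral sequence, using the May-name computations of ~\cite{DI13} for the motivic Hopf invariant one elements and Andrews's identification of the relevant $w_1$-periodic generator in ~\cite{And14}.

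With this identification in place, nonvanishing of the iterated Massey products $g^i(x)$ on the $E_2$-page is immediate: by Adams~\cite{Ada66b}, $P_v^i(\eta), P_v^i(\eta^2), P_v^i(\eta^3), P_v^i(8\sigma)$ are nontrivial in $Ext_{A_{cl}}^{**}(\f_2,\f_2)$, and $\cd$ is injective on its image while preserving all Massey product structure. To upgrade these to permanent cycles, I would compare the $C\tau$-linear $\overline{H}$-based Adams spectral sequence with the motivic Adams spectral sequence via the unit $S^{0,0} \to C\tau$. The preceding lemma of the paper established that $P_v^i(\eta), P_v^i(\eta^2), P_v^i(\eta^3), P_v^i(8\sigma)$ are permanent cycles in the motivic Adams spectral sequence; since the factorization of $\cd$ through the motivic Steenrod algebra (as spelled out in the proof of Lemma~\ref{hv1per}) is compatible with this comparison, the images $g^i(\nu), g^i(\nu^2), g^i(\nu^3), g^i(\eta^2\eta_4)$ are permanent cycles in the $C\tau$-linear spectral sequence.

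To identify the classes detected with Andrews's families $P^i(x)$, I would combine the detection map of ~\cite[Definition 2.8]{And14} from the motivic Adams-Novikov $E_2$-page to the classical Adams $E_2$-page with the Gheorghe-Wang-Xu equivalence ~\cite{GWXPP}, which identifies the $C\tau$-linear Adams $E_2$-page with $Ext_{BP_*BP}^{**}(BP_*,BP_*)$. Under this dictionary, the Massey product $g(-) = \langle h_4, h_1^4, -\rangle$ is precisely the algebraic shadow of Andrews's $w_1^4$-self-map of $C\eta$ from ~\cite[Theorem 3.4]{And14}, and the composition $C\eta \overset{w_1^{4i}}{\to} \Sigma^{-20i,-12i}C\eta \to S^{2-20i,1-12i}$ defining $P^i(x)$ realizes exactly the $i$-fold iterated Toda bracket $g^i(x)$ by naturality of Massey products with respect to cofiber sequences.

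The main obstacle is the identification step in the first paragraph: verifying that $\cd(h_0^3 h_3) = h_1^3 h_4$ genuinely detects the specific class denoted $\eta^2\eta_4$ in Andrews's family, as opposed to some other element of the same bidegree. This requires tracing both the $C\tau$-linear Adams filtration and the motivic Adams-Novikov filtration of $\eta^2\eta_4$, invoking Andrews's explicit construction in ~\cite[Theorem 3.12]{And14} of $P(i) = \eta^2 \eta_4$ as the first $w_1^4$-periodic representative, and confirming compatibility with the degree-preserving doubling $\cd$ under the GWX equivalence.
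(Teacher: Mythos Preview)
Your argument for nonvanishing of $g^i(x)$ on the $E_2$-page via the doubling homomorphism $\cd$ is fine and matches the paper. The gap is in your permanence argument in the second paragraph. You invoke the preceding lemma, which shows that the motivic $v_1$-periodicity classes $P_v^i(h_1), P_v^i(h_1^2), P_v^i(h_1^3), P_v^i(h_0^3 h_3)$ are permanent cycles in the motivic Adams spectral sequence, and then try to push these forward along $S^{0,0} \to C\tau$. But the factorization of $\cd$ through $Ext_A$ spelled out in Lemma~\ref{hv1per} (via $h_{i,j-1} \mapsto h_{i,j}$, Remark~\ref{bop}) sends the \emph{classical} $P_v^i(h_1)$ to the \emph{motivic} class $g^i(h_2) = \langle h_4, h_1^4, \ldots, h_2\rangle$, not to the motivic $P_v^i(h_1) = \langle h_3, h_0^4, \ldots, h_1\rangle$. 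These live in completely different tridegrees. The preceding lemma says nothing about permanence of $g^i(h_2)$ in $Ext_A$, so the comparison map $Ext_A \to Ext_{\overline{A}}$ cannot be used in the way you propose. In short, you have conflated the motivic $v_1$-periodic classes (what the preceding lemma controls) with the motivic $w_1$-periodic classes (what you actually need).

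The paper closes this gap by reversing the logic: rather than proving permanence first and then identifying the detected class, it first proves that $g^i(x)$ detects Andrews's family $P^i(x)$ and then \emph{deduces} permanence from the fact that Andrews already constructed $P^i(x)$ as a nontrivial homotopy class. The key computational input is the $C\tau$-linear May differential $d_4(b_{21}^2) = h_1^4 h_4$, which shows that the class $x' \in Ext_{\overline{A}}(\overline{H}^{**}(C\eta))$ corresponding (via $\cd \circ d$, with $d$ Andrews's detection map) to Andrews's $w_1^4$-self-map has Atiyah--Hirzebruch--May name $b_{21}^2[(0,0)]$. This gives $g(x) = \langle h_1^4, h_4, x\rangle = b_{21}^2 \cdot x$, so $g(-)$ and Andrews's $P(-)$ both detect multiplication by $w_1^4$; permanence then comes for free from \cite{And14}. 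Your third paragraph gestures toward this identification but treats it as a formality of the GWX dictionary, whereas the paper makes it concrete via the May name $b_{21}^2$ and the explicit differential---and crucially, it is this identification, not a spectral-sequence comparison with the $v_1$-periodic classes, that yields permanence.
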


\begin{proof}
By Corollary ~\ref{mw1per} and ~\cite{Ada66b}, these iterated Massey products are nontrivial with zero indeterminacy. We claim that they are permanent cycles which detect the infinite $w^4_1$-periodic families constructed by Andrews in ~\cite{And14}. Composing Andrews' detection map above with the doubling homomorphism gives a map
$$\cd \circ d : Ext^{***}_{BPGL_{**}BPGL}(BPGL_{**},BPGL_{**}) \to Ext^{***}_{\overline{A}}(\f_2,\f_2).$$

By the computations of the images of the detection map in ~\cite[Section 4]{And14} along with Remark ~\ref{bop}, we see that the images of the classes $\alpha_{4/4}^i$ for $1 \leq i \leq 3$ and $1$ in the motivic Adams-Novikov spectral sequence under under $\cd \circ d$ are $h^i_2$ and $1$ in the $C\tau$-linear $\overline{H}$-based Adams spectral sequence. Moreover, we can relate the class $x \in Ext^{4,24,12}_{BPGL_{**}BPGL}(BPGL_{**}(C\eta))$ constructed in ~\cite[Proposition 3.3]{And14} which maps to $\alpha^2_1 \beta_{4/3}$ under the collapse map to a class in the $C\tau$-linear $\overline{H}$-based Adams spectral sequence. The class $\alpha^2_1 \beta_{4/3}$ maps to $h^3_1 h_4$ under the composition of the detection map with the doubling map, so the same argument as in the proof of ~\cite[Proposition 3.3]{And14} defines an element $x' \in Ext^{4,24,12}_{\overline{A}}(\overline{H}^{**}(C\eta))$ which maps to $h^3_1 h_4$ under the analogous composition. Further, it follows from the $C\tau$-linear $\overline{H}$-based May differential $d_4(b^2_{21}) = h^4_1 h_4$ that $x'$ has the Atiyah-Hirzebruch-May name $b^2_{21}[(0,0)]$. This $d_4$-differential follows from the same argument as the motivic May differential of the same name.

Therefore multiplication by $b^2_{21}$ detects $w^4_1$ in the $C\tau$-linear $\overline{H}$-based Adams spectral sequence converging to $\pi_{**}(C\eta)$. Since we have already shown that the Massey products $g(x)$ are nontrivial with zero indeterminacy, we have
$$g(x) = \langle h^4_1, h_4, x \rangle = b^2_{21} x.$$
Therefore the periodicity operators $g(-)$ defined above and the periodicity operator $P(-)$ defined by Andrews both detect $w^4_1 x$. Since Andrews infinite families are permanent cycles, so are the infinite families $g^i(x)$ for $x$ as stated in the lemma. 
\end{proof}

\begin{thm}
Let $i \geq 1$. The motivic Mahowald invariant of $\eta^i$ is given by
\[ M(\eta^i) \ni \begin{cases}
g^{\lfloor i/4 \rfloor}(h_1^3h_4) \quad & i \equiv 0 \mod 4, \\
g^{\lfloor i/4 \rfloor}(h_2) & i \equiv 1 \mod 4,\\
g^{\lfloor i/4 \rfloor}(h_2^2) & i \equiv 2 \mod 4,\\
g^{\lfloor i/4 \rfloor}(h_2^3) & i \equiv 3 \mod 4.
\end{cases}
\]
Here we are denoting nontrivial Toda brackets in $\pi_{**}(S^{0,0})$ by the Massey products which detect them.
\end{thm}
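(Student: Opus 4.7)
The plan is to mirror the proof of Theorem \ref{mm2i} in the $C\tau$-linear setting, using $wko$ in place of $kq$, the $w_1$-periodicity operator $g(-)$ in place of the $v_1$-periodicity operator $P_v(-)$, and the $4$-cell complex $\underline{L}^1_{-2}$ (or a suitable desuspension of $C\eta \wedge C\nu$) in place of $V(0)$. The role of the doubling homomorphism of Lemma \ref{hv1per} is to transport the purely classical/May-theoretic input of the $kq$ argument into the $C\tau$-linear setting. First I would handle the two cases $i \equiv 1, 2 \pmod 4$ directly: by Proposition \ref{wkoeta} we have $\nu\beta^{a}, \nu^2\beta^{a} \in M_{wko}(\eta^{4a+1}), M_{wko}(\eta^{4a+2})$, and by the lemma preceding the theorem the classes $g^{a}(h_2), g^{a}(h_2^2)$ are permanent cycles in the motivic Adams spectral sequence detecting Andrews' $w_1$-periodic families, whose images under the Hurewicz map $S^{0,0} \to wko$ are $\nu\beta^{a}, \nu^2\beta^{a}$. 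Proposition \ref{lift} then finishes these cases immediately.

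For the harder cases $i \equiv 0, 3 \pmod 4$, the classes $\beta^{a}, \alpha\beta^{a} \in \pi_{**}(wko)$ are not in the Hurewicz image, so I would pass through the chain of approximations
\[
M_{wko}(\eta^i) \rightsquigarrow M_{wko}(\eta^i; \underline{L}^1_{-2}) \rightsquigarrow M(\eta^i; \underline{L}^1_{-2}) \rightsquigarrow M(\eta^i),
\]
exactly parallel to the $kq$-case. The first arrow is read off from the $wko$ Atiyah-Hirzebruch spectral sequence (Figure \ref{ahwko}) by identifying the cell on which $M_{wko}(\eta^i)$ is detected, which determines the filtration $N$ in Definition \ref{miebx}; this yields Atiyah-Hirzebruch-May names $\beta^{a}[(0,0)]$ and $\alpha\beta^{a}[(0,0)]$ for the approximation. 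The last arrow is then essentially cosmetic: since the $\underline{L}^1_{-2}$-approximation already records the suspension degree, the refinement to $M(\eta^i)$ comes from reading off the correct top/bottom cell in the corresponding wedge, as in Definition \ref{mmidef}.

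The key step, and the main obstacle, is the middle arrow: lifting the $wko$-approximation to the $\underline{L}^1_{-2}$-approximation in the motivic stable stems. I would carry this out by induction on $a$ using the $C\tau$-linear $\overline{H}$-based Adams spectral sequence. The base case rests on a $C\tau$-linear $\overline{H}$-based May differential $d_{2}(h_1 b_{21}) = h_1^3 h_3$ (respectively $d_{4}(b_{21}^2) = h_1^4 h_4$), obtained from the classical differential $d_2(h_0 b_{20}) = h_0^3 h_2$ (respectively $d_4(b_{20}^2) = h_0^4 h_3$) by applying the doubling homomorphism, and establishes that classes of the form
\[
b_{21}^{2a}[(0,0)] + g^{a-1}(h_1^3 h_4)[(1,0)], \qquad h_1 b_{21}^{2a}[(0,0)] + g^{a}(h_2^3)[(1,0)]
\]
simultaneously detect lifts of $P^{a-1}(\eta^2\eta_4)$ (resp.\ $P^{a}(\nu^3)$) in $\pi_{**}(\underline{L}^1_{-2})$ and $\beta^{a}$ (resp.\ $\alpha\beta^{a}$) in $\pi_{**}(wko \wedge \underline{L}^1_{-2})$. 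The inductive step then follows the cubical diagram argument of Theorem \ref{mm2i}: combining the $C\tau$-linear periodicity operator $g(-) = \langle h_4, h_1^4, - \rangle$ (Corollary \ref{mw1per}), the $C\tau$-linear analogues of the algebraic squaring operations $d_{2^k}(b_{21}^{2^k}) = h_1^{2^k} h_{2+k}$ (which show that $b_{21}^{2a}[(0,0)]$ detects zero in $\pi_{**}(\underline{L}^1_{-2})$), and a commutative cube comparing the classical Adams spectral sequences for $V(0)$ and $ko \wedge V(0)$ with the $C\tau$-linear $\overline{H}$-based Adams spectral sequences for $\underline{L}^1_{-2}$ and $wko \wedge \underline{L}^1_{-2}$ via the doubling homomorphism.

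The difficulty in this main step is verifying that all the ingredients Mahowald-Ravenel's proof (and our analogous proof for $M(2^i)$) used implicitly over the classical Steenrod algebra have their exact $C\tau$-linear counterparts with the same formal properties: the May differentials, the sparseness/indeterminacy arguments in the image of $J$ which pin down which homotopy class a Massey product detects, and the Leibniz rule comparisons. Lemma \ref{hv1per} and Remark \ref{bop} together with the $C\tau$-linear $\overline{H}$-based May spectral sequence constructed above do all the heavy lifting here; concretely, any classical Adams $E_2$-relation or May differential involving $h_{i,0}, h_{i,1}$ doubles to a $C\tau$-linear relation involving $h_{i,1}, h_{i,2}$, and Andrews' detection map combined with doubling guarantees that the periodic families $g^{a}(x)$ are not only defined with no indeterminacy but are permanent cycles detecting the appropriate $w_1$-periodic families in $\pi_{**}(S^{0,0})$.
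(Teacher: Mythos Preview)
Your high-level strategy is right and matches the paper: handle $i\equiv 1,2\pmod 4$ directly via Proposition~\ref{lift} and Proposition~\ref{wkoeta}, and for $i\equiv 0,3\pmod 4$ pass through approximations based on the $4$-cell complex $C=\underline{L}^1_{-2}$, running an induction on the cube that transports the classical $ko/V(0)$ argument via doubling. However, the way you set up the key middle step is not correct as written, and the paper's actual argument differs in an essential structural way.

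First, your Atiyah--Hirzebruch--May names treat $C$ as a two-cell complex: you write classes like $b_{21}^{2a}[(0,0)]+g^{a-1}(h_1^3h_4)[(1,0)]$, but $C=\underline{L}^1_{-2}$ has four cells, in bidegrees $(0,0),(1,1),(2,1),(3,2)$. The paper's analogous classes live in $\pi_{**}(V(0)\wedge C)$ and carry \emph{two} cell labels, e.g.\ $b_{21}^2[(0,0),(1,1)]+h_1^3h_4[(0,0),(3,2)]$; the cell $(3,2)$ of $C$ is what ultimately pins down the stem of $M(\eta^i)$. Relatedly, the paper records $M_{wko}(\eta^i;C)$ as $\beta^k\alpha^\epsilon[(1,1)]$ (not $[(0,0)]$), justified by the observation that $\beta^k\alpha^\epsilon$ is not in the image of multiplication by $\eta$.

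Second, and more seriously, your proposed cube compares $Ext_{A_{cl}}(V(0))$ on the bottom with $Ext_{\overline{A}}(\underline{L}^1_{-2})$ on the top via the doubling homomorphism. But doubling sends $Sq^1\mapsto Sq^2$, so the $A_{cl}$-module $H^*(V(0))$ doubles to the $\overline{A}$-module $\overline{H}^{**}(C\eta)$, a two-cell complex, \emph{not} to $\overline{H}^{**}(C)$. There is no natural vertical map between these Ext groups making your cube commute. The paper fixes this by inserting an extra approximation through $V(0)$: its chain is
\[
M_{wko}(\eta^i)\rightsquigarrow M_{wko}(\eta^i;C)\rightsquigarrow M_{V(0)}(\eta^i;C)\rightsquigarrow M_{V(0)}(\eta^i)\rightsquigarrow M(\eta^i),
\]
using the canonical map $V(0)\to wko$ (which exists since $2=0$ in $\pi_{**}(wko)$). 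The cube then has $V(0)\wedge C$ on both the classical and $C\tau$-linear floors, with vertical map $h_{ij}[(m),(n)]\mapsto h_{i,j+1}[(m,0),(n,\lfloor(n+1)/2\rfloor)]$; this is what makes the comparison with the classical $ko\wedge V(0)\wedge C$ argument go through. The final two steps then read off the $C$-cell (giving $M_{V(0)}(\eta^i)$) and observe the class sits on the bottom cell of $V(0)$ (giving $M(\eta^i)$), so the last arrow is not ``cosmetic'' until after this refinement. Your four-step chain omits exactly the device that makes the doubling comparison well-defined.
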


\begin{proof}
$(i \equiv 1,2\mod 4)$ The elements $\beta^k\nu, \beta^k\nu^2 \in \pi_{**}(wko)$ are in the Hurewicz image for $wko$. Their inverse images are $g^k(\nu), g^k(\nu^2) \in \pi_{**}(S^{0,0})$, so this case is clear by Proposition ~\ref{lift} applied to Proposition ~\ref{wkoeta}. 

$(i \equiv 0,3 \mod4)$. The elements $\alpha, \beta \in \pi_{**}(wko)$ are not in the Hurewicz image, so we cannot immediately employ Proposition ~\ref{lift}. Instead, we must pass through the motivic Mahowald invariant based on the $4$-cell complex $C := \underline{L}^1_{-2}$. We compute approximations in the following order:
$$M_{wko}(\eta^i) \rightsquigarrow M_{wko}(\eta^i;C) \rightsquigarrow M_{V(0)}(\eta^i;C) \rightsquigarrow M_{V(0)}(\eta^i) \rightsquigarrow M(\eta^i).$$

\begin{enumerate}
\item By Proposition ~\ref{wkoeta}, we have computed $M_{wko}(\eta^i)$.
\item By the proof of Proposition ~\ref{wkoeta}, we know on which cells of $\underline{L}^\infty_{-\infty}$ the elements $\eta^{4k}$ and $\eta^{4k+3}$ are first detected. The topological degree of the cell determines where in the filtration of $\underline{L}^\infty_{-\infty}$ by $C$ the coset $M_{wko}(\eta^i;C)$ is detected. We obtain the following table of values
\begin{center}
\begin{tabular}{ c c c c }
$x$ & s & N & $M_{wko}(x;C)$\\
$\eta^{4k}$ & $-16k$ & $1+8k$   & $\beta^{k}[(1,1)]$ \\
$\eta^{4k+3}$ & $-8-16k$ & $5+4k$ & $\beta^k \alpha[(1,1)]$ \\
\end{tabular}
\end{center}
To justify that $\beta^k \alpha^\epsilon[(1,1)]$, $\epsilon \in \{0,1\}$, are the correct Atiyah-Hirzebruch-May names for the classes above, it suffices to note that $\beta^k \alpha^\epsilon \in wko_{**}$ is not in the image of multiplication by $\eta$ for any $k, \epsilon$. 

\item We now use the canonical map $h: V(0) \to wko$ to compute $M_{V(0)}(\eta^i;C)$ from $M_{wko}(\eta^i;C)$. We need to assign Atiyah-Hirzebruch-May names to elements $\alpha \in \pi_{**}(V(0) \wedge C)$. Our notation will be that $\alpha$ is detected by $x[(m_1,n_1),(m_2,n_2)]$ where $(m_2,n_2)$ comes from the Atiyah-Hirzebruch-May name for $\alpha \in \pi_{**}(C)$ and $(m_1,n_1)$ comes from the Atiyah-Hirzebruch-May name for $\alpha \in \pi_{**}(V(0) \wedge C)$. 

We will prove the following by induction on $k$:
\begin{center}
\begin{tabular}{ c c  }
$x$ & $h^{-1}(x)$\\
$\beta^{k}[(1,1)]$ & $g^{k-1}(\eta^2\eta_4)[(0,0),(3,2)]$\\
$\beta^k \alpha[(1,1)]$ & $g^k(\nu^3)[(0,0),(3,2)].$ 
\end{tabular}
\end{center}
We start with the case $x=\beta^k[(1,1)]$. We can obtain the Atiyah-Hirzebruch-May name for an element in $\pi_{**}(V(0) \wedge C)$ arising from $\eta^2 \eta_4 \in \pi_{**}(S^{0,0})$ using cofiber sequences as follows. As in the proof of Theorem ~\ref{mm2i}, the maps $f_*,j_*,\delta$ are the maps in the long exact sequence in homotopy groups associated to a cofibration. First, recall that $C\eta$ is defined by the cofiber sequence
$$S^{1,1} \overset{\eta}{\to} S^{0,0} \to C\eta.$$
By ~\cite[Proposition 5.5]{DI10}, there is a motivic May differential $d_4(b^2_{21}) = h^4_1 h_4$. The same proof implies that there is a $C\tau$-linear $\overline{H}$-based May differential $d_4(b^2_{21}) = h^4_1 h_4$. In the Adams spectral sequence for $\pi_{**}(S^{0,0})$, this differential produces the relation $\eta \cdot (\eta^2 \eta_4) = 0$. Then the Atiyah-Hirzebruch-May name for $\delta^{-1}(\eta^2 \eta_4) \in \pi_{**}(C\eta)$ is $h^3_1h_4[(2,1)]$. Now, $C$ is the defined by the cofiber sequence
$$\Sigma^{-1,0} C\eta \overset{f}{\to} \Sigma^{1,1} C\eta \to C$$
where $f$ is the composition of the collapse onto the top cell and multiplication by $2$. Therefore $C$ has cells in dimensions $(0,0)$ and $(2,1)$ coming from $\Sigma^{-1,0} C\eta$ and cells in dimensions $(1,1)$ and $(3,2)$ coming from $\Sigma^{1,1} C\eta$. Although $h^2_0 h_2h_4 = \tau h^3_1 h_4$ in the motivic Adams spectral sequence, the class $\eta^2 \eta_4 \in \pi_{**}(S^{0,0})$ detected by $h^3_1 h_4$ is not in the image of $\cdot 2$ ~\cite{Isa14b}. Therefore we see that $\eta^2 \eta_4[(3,2)] \in \pi_{**}(\Sigma^{1,1}C\eta)$ is not in the image of $f_*$ and so we obtain the Atiyah-Hirzebruch-May name $h^3_1h_4[(3,2)]$ for the element $j_*(\eta^2\eta_4[(3,2)]) \in \pi_{**}(C)$. Finally, the complex $V(0) \wedge C$ is defined by the cofiber sequence
$$C \overset{\cdot 2}{\to} C \to V(0) \wedge C.$$
By the same reasoning as above, we see that the Atiyah-Hirzebruch-May name for the element $j_*(\eta^2\eta_4[(3,2)]) \in \pi_{**}(V(0) \wedge C)$ is $h^3_1h_4[(0,0),(3,2)]$. Note that the above May differential implies that $b^2_{21}$ detects zero in $\pi_{**}(S^{0,0})$, and so it also detects zero in $\pi_{**}(V(0) \wedge C)$. 

On the other hand, $b^2_{21}$ detects $\beta \in \pi_{**}(wko)$ by the $C\tau$-linear version of the classical argument that $b^2_{20}$ detects $\beta \in \pi_*(ko)$. Since $b^2_{21}$ supports multiplication by $\eta$, does not support multiplication by $2$, and is not in the image of multiplication by $\eta$ or $2$, the same series of cofiber sequences as above shows that there is a class in $\pi_{**}(wko \wedge V(0) \wedge C)$ with Atiyah-Hirzebruch-May name $b^2_{21}[(0,0),[(1,1)]$. Since $h_4$ does not appear in the $C\tau$-linear $\overline{H}$-based May spectral sequence converging to $Ext_{\overline{A}(1)}(\f_2,\f_2)$, we see that $h^3_1 h_4$ detects zero in $\pi_{**}(kq)$. In particular, this implies that the class above detects zero in $\pi_{**}(wko \wedge V(0) \wedge C)$. 

By the previous two paragraphs, we see that the class 
$$b^2_{21}[(0,0),(1,1)] + h^3_1h_4[(0,0),(3,2)]$$
detects $\eta^2 \eta_4[(0,0),(3,2)]$ in $\pi_{**}(V(0) \wedge C)$ and detects $\beta[(0,0),(1,1)]$ in $\pi_{**}(wko \wedge V(0) \wedge C)$. Therefore we have $h^{-1}(\beta[(0,0),(1,1)]) = \eta^2 \eta_4[(0,0),(3,2)]$. This completes the base case of the induction.

Now suppose that we have shown that 
$$h^{-1}(\beta^i([1,1])) = P^{i-1}(\eta^2\eta_4)[(0,0),(3,2)]$$
for all $i < n$. We can reformulate the induction hypothesis using the $C\tau$-linear $\overline{H}$-based Adams spectral sequence as follows: for $i < n$, the class in $\pi_{**}(V(0) \wedge C)$ detected by
$$b^{2i}_{21}[(0,0),(1,1)] + g^{i-1}(h^3_1h_4)[(0,0),(3,2)]$$
maps under $h$ to the class in $\pi_{**}(wko \wedge V(0) \wedge C)$ detected by $b^{2i}_{21}[(0,0),(1,1)]$.

To see that this reformulation implies the original induction hypothesis, we need to show that $b^{2i}_{21}[(0,0),(1,1)]$ detects zero in $\pi_{**}(V(0) \wedge C)$ for all $i \geq 1$. Using algebraic squaring operations, we can produce May differentials
$$d_{2^k}(b^{2^k}_{21}) = d_{2^k}(Sq^{2^{k}}(b^{2^{k-1}}_{21}) )= h^{2^k}_1 h_{2+k}$$
with $k \geq 2$ as described in ~\cite[Remark 5.7]{DI10}. By the Leibniz rule, we can obtain nontrivial May differentials on $b^{2i}_{21}$ for all $i \geq 1$. These elements could support shorter differentials, but in any case we have shown that $b^{2i}_{21}$ detects zero in $\pi_{**}(S^{0,0})$ and so $b^{2i}_{21}[(0,0),(1,1)]$ detects zero in $\pi_{**}(V(0) \wedge C)$. Therefore the sum above detects $g^{i-1}(\eta^2 \eta_4)[(0,0),(3,2)]$ in $\pi_{**}(C \wedge V(0))$. Since $b^{2i}_{21}[(0,0),(3,2)]$ detects $\beta^i[(0,0),(1,1)]$ in $\pi_{**}(wko \wedge V(0) \wedge C)$, we have shown that the reformulation implies the original induction hypothesis.

To complete the induction, consider the diagram
\[
\begin{tikzcd}[row sep={30,between origins}, column sep={85,between origins}]
      & Ext_A^{***}(V(0) \wedge C) \ar{rr}{g}\ar{dl} & & Ext_A^{*+4,*+20,*+12}(V(0) \wedge C) \ar{dl} \\
   Ext_A^{***}(wko \wedge V(0) \wedge C) \arrow[crossing over]{rr}{\cdot b^2_{21}} & & Ext_A^{*+4,*+20,*+12}(wko\wedge V(0) \wedge C) \\
      &Ext_{A_{cl}}^{**} (V(0) \wedge C) \ar{uu} \ar{rr}{P_v} \ar{dl} & &  Ext_{A_{cl}}^{*+4,*+8}(V(0) \wedge C) \ar{dl} \ar{uu} \\
    Ext_{A_{cl}}^{**}(ko \wedge V(0) \wedge C) \ar{rr}{\cdot b^2_{20}} \ar{uu} && Ext_{A_{cl}}^{*+4,*+8} (ko \wedge V(0) \wedge C). \ar{uu}
\end{tikzcd}
\]
The vertical maps are induced by sending the class $h_{ij}[(m)(n)]$ to the class $h_{i,j+1}[(m,0),(n,\lfloor (n+1)/2 \rfloor)]$, and similarly for classes detected by products and Massey products. 

In the bottom face of the diagram, consider the element $b^{2n-2}_{20}[(0),(1)] + P^{n-2}_v(h^3_0h_3)[(0),(3)]$ which detects $P^{n-2}_v(8\sigma)[(0),(3)] \in \pi_{*}(V(0) \wedge C)$. Its image in $\Ext_{A_{cl}}^{**}(ko \wedge V(0) \wedge C)$ is $b^{2n-2}_{20}[(0),(1)]$ which detects $b_{20}^{2(n-1)}[(0),(1)]$, its image in $Ext_{A_{cl}}^{*+4,*+8}(V(0) \wedge C)$ is $b^{2n}_{20}[(0),(1)] + P^{n-1}_v(h^3_0h_3)[(0),(3)]$ which detects $P^{n-1}_v(8\sigma)[(0),(1)]$, and its image in $Ext_{A_{cl}}^{*+4,*+8}(ko \wedge V(0) \wedge C)$ is $b^{2n}_{20}[(0),(1)]$ which detects $\beta^n[(0),(1)]$. This follows from the same argument as in the proof of Theorem ~\ref{mm2i}. 

Now consider the element $g^{n-2}(h_1^3 h_4)[(0,0),(3,2)] \in Ext_{\overline{A}}^{***}(V(0) \wedge C)$ in the top face of the cube. By the induction hypothesis, its image in $Ext_{\overline{A}}^{***}(wko \wedge V(0) \wedge C)$ is the class $b_{21}^{2(n-1)}[(0,0),(1,1)]$. Its inverse image under the vertical map is $P^{n-2}_v(h^3_0 h_3)[(0),(1)]$. We can calculate the images of $g^{n-2}(h_1^3 h_4)[(0,0),(3,2)]$ in the top face using the vertical maps and the previous paragraph. In particular, we conclude that $g^{n-1}(h^3_1 h_4)[(0,0),(3,2)] \in Ext_{\overline{A}}^{*+4,*+20,*+12}(V(0) \wedge C)$ maps to $b_{21}^{2n}[(0,0),(1,1)] \in Ext_{\overline{A}}^{*+4,*+20,*+12}(wko \wedge V(0) \wedge C)$. This completes the induction step, so we have proven the values in the table for $x= \beta^k[(1,1)]$. 

The computation of $h^{-1}(\beta^k\alpha[(1,1)])$ is completely analogous. Starting with the May differential
$$d_4(b_{20}h^4_1) = h_1h^3_2$$
which produces the relation $\eta \cdot (\nu^3)$ in $\pi_{**}(S^{0,0})$, we see that the class
$$b_{20}h^4_1[(0,0),(1,1)] + h^3_2[(0,0),(3,2)]$$
detects $\nu^3[(0,0),(3,2)] \in \pi_{**}(V(0) \wedge C)$ and detects $\alpha \in \pi_{**}(wko \wedge V(0) \wedge C)$, which shows that $h^{-1}(\alpha[(1,1)]) = \nu^3[(0,0),(3,2)]$. The same argument using Massey products, algebraic squaring operations, and the commutative cubical diagram completes the induction. 

\item Since we know which cell of $\underline{L}^\infty_{-\infty}$ the coset $M_{V(0)}(\eta^i;C)$ is detected on, we obtain the following refinement of the previous computation: 
\begin{center}
\begin{tabular}{ c c  }
$x$ & $M_{V(0)}(x)$\\
$\eta^{4k+3}$ & $g^{k-1}(\nu^3)[(0,0)]$\\
$\eta^{4k+4}$ & $g^{k-1}(\eta^2 \eta_4)[(0,0)].$ 
\end{tabular}
\end{center}
\item Proposition ~\ref{lift} applied to the inclusion of the bottom cell $S^{0,0} \hookrightarrow V(0)$ completes the computation. 
\end{enumerate}
\end{proof}

\bibliographystyle{plain}
\bibliography{master}

\begin{thebibliography}{10}

\bibitem{AGM85}
JF~Adams, JH~Gunawardena, and HR~Miller.
\newblock The {S}egal conjecture for elementary abelian p-groups.
\newblock {\em Topology}, 24(4):435--460, 1985.

\bibitem{Ada66}
John~Frank Adams.
\newblock On the groups {J}({X}) - {I}{V}.
\newblock {\em Topology}, 5(1):21--71, 1966.

\bibitem{Ada66b}
John~Frank Adams.
\newblock A periodicity theorem in homological algebra.
\newblock In {\em Mathematical Proceedings of the Cambridge Philosophical
  Society}, volume~62, pages 365--377. Cambridge University Press, 1966.

\bibitem{And14}
Michael Andrews.
\newblock New families in the homotopy of the motivic sphere spectrum.
\newblock {\em Preprint}, 2014.

\bibitem{Beh06}
Mark Behrens.
\newblock Root invariants in the {A}dams spectral sequence.
\newblock {\em Transactions of the American Mathematical Society},
  358(10):4279--4341, 2006.

\bibitem{Beh07}
Mark Behrens.
\newblock Some root invariants at the prime 2.
\newblock {\em Geometry \& Topology Monographs}, 10(1):1--40, 2007.

\bibitem{BHHM08}
Mark Behrens, Michael Hill, Michael~J Hopkins, and Mark Mahowald.
\newblock On the existence of a $ v_2^{32}$-self map on $ {M} (1, 4) $ at the
  prime 2.
\newblock {\em Homology, Homotopy and Applications}, 10(3):45--84, 2008.

\bibitem{BE16}
Prasit Bhattacharya and Philip Egger.
\newblock A class of $2 $-local finite spectra which admit a $ v_2^1$-self-map.
\newblock {\em arXiv preprint arXiv:1608.06250}, 2016.

\bibitem{DM84}
Donald~M Davis and Mark Mahowald.
\newblock The spectrum ({P} $\wedge$ bo$)_\infty$.
\newblock In {\em Mathematical Proceedings of the Cambridge Philosophical
  Society}, volume~96, pages 85--93. Cambridge Univ Press, 1984.

\bibitem{DI10}
Daniel Dugger and Daniel~C Isaksen.
\newblock The motivic {A}dams spectral sequence.
\newblock {\em Geometry \& Topology}, 14(2):967--1014, 2010.

\bibitem{DI13}
Daniel Dugger and Daniel~C Isaksen.
\newblock Motivic {H}opf elements and relations.
\newblock {\em New York J. Math}, 19(823-871):50, 2013.

\bibitem{Ghe17b}
Bogdan Gheorghe.
\newblock Exotic motivic periodicities.
\newblock {\em arXiv preprint arXiv:1709.00915}, 2017.

\bibitem{Ghe17}
Bogdan Gheorghe.
\newblock The motivic cofiber of tau.
\newblock {\em arXiv preprint arXiv:1701.04877}, 2017.

\bibitem{GWXPP}
Bogdan Gheorghe, Guozhen Wang, and Zhouli Xu.
\newblock ${B}{P}_*{B}{P}$-comodules and motivic ${C}\tau$-modules.
\newblock {\em In prepraration}.

\bibitem{Gre12}
Thomas Gregersen.
\newblock {\em A {S}inger construction in motivic homotopy theory}.
\newblock PhD thesis, University of Oslo, Norway, 2012.

\bibitem{GI15}
Bertrand~J Guillou and Daniel~C Isaksen.
\newblock The motivic {A}dams vanishing line of slope $1/2$.
\newblock {\em New York J. Math}, 21:533--545, 2015.

\bibitem{HS98}
Michael~J Hopkins and Jeffrey~H Smith.
\newblock Nilpotence and stable homotopy theory {I}{I}.
\newblock {\em Annals of Mathematics}, 148(1):1--49, 1998.

\bibitem{HKO11}
Po~Hu, Igor Kriz, and Kyle Ormsby.
\newblock Convergence of the motivic {A}dams spectral sequence.
\newblock {\em Journal of K-theory}, 7(3):573--596, 2011.

\bibitem{Isa14b}
Daniel~C Isaksen.
\newblock Classical and motivic {A}dams charts.
\newblock {\em arXiv preprint arXiv:1401.4983}, 2014.

\bibitem{Isa14}
Daniel~C Isaksen.
\newblock Stable stems.
\newblock {\em Mem. Amer. Math. Soc., to appear}, 2018.

\bibitem{IS11}
Daniel~C Isaksen and Armira Shkembi.
\newblock Motivic connective {K}-theories and the cohomology of {A}(1).
\newblock {\em Journal of K-theory: K-theory and its Applications to Algebra,
  Geometry, and Topology}, 7(03):619--661, 2011.

\bibitem{Kra18}
Achim Krause.
\newblock {\em Periodicity in motivic homotopy theory and over
  ${B}{P}_*{B}{P}$}.
\newblock PhD thesis, Max Planck Institute for Mathematics, 2018.

\bibitem{Lev14}
Marc Levine.
\newblock A comparison of motivic and classical stable homotopy theories.
\newblock {\em Journal of Topology}, 7(2):327--362, 2014.

\bibitem{LDMA80}
WH~Lin, DM~Davis, ME~Mahowald, and JF~Adams.
\newblock Calculation of {L}in's {E}xt groups.
\newblock In {\em Mathematical Proceedings of the Cambridge Philosophical
  Society}, volume~87, pages 459--469. Cambridge Univ Press, 1980.

\bibitem{MR87}
Mark~E Mahowald and Douglas~C Ravenel.
\newblock Toward a global understanding of the homotopy groups of spheres.
\newblock In {\em The Lefschetz Centennial Conference: Proceedings on Algebraic
  Topology, volume 58 II of Contemporary Mathematics}. Citeseer, 1987.

\bibitem{MR93}
Mark~E Mahowald and Douglas~C Ravenel.
\newblock The root invariant in homotopy theory.
\newblock {\em Topology}, 32(4):865--898, 1993.

\bibitem{MS87}
Mark~E Mahowald and Paul Shick.
\newblock Periodic phenomena in the classical {A}dams spectral sequence.
\newblock {\em Transactions of the American Mathematical Society},
  300(1):191--206, 1987.

\bibitem{MRW77}
Haynes~R Miller, Douglas~C Ravenel, and W~Stephen Wilson.
\newblock Periodic phenomena in the {A}dams-{N}ovikov spectral sequence.
\newblock {\em Annals of Mathematics}, 106(3):469--516, 1977.

\bibitem{Mor12}
Fabien Morel.
\newblock {\em A1-algebraic topology over a field}, volume 2052.
\newblock Springer, 2012.

\bibitem{MV99}
Fabien Morel and Vladimir Voevodsky.
\newblock ${A}^1$-homotopy theory of schemes.
\newblock {\em Publications Mathématiques de l'IHÉS}, 90:45--143, 1999.

\bibitem{Rav86}
Douglas~C Ravenel.
\newblock {\em Complex cobordism and stable homotopy groups of spheres}, volume
  121.

\bibitem{Ric17}
Nicolas Ricka.
\newblock Motivic modular forms from equivariant stable homotopy theory.
\newblock {\em arXiv preprint arXiv:1704.04547}, 2017.

\bibitem{Sad92}
Hal Sadofsky.
\newblock The root invariant and $v_1$-periodic families.
\newblock {\em Topology}, 31(1):65--111, 1992.

\bibitem{Shi87}
Paul Shick.
\newblock On root invariants of periodic classes in
  ${E}xt_{A}(\mathbb{Z}/2,\mathbb{Z}/2)$.
\newblock {\em Transactions of the American Mathematical Society},
  301(1):227--237, 1987.

\bibitem{Smi70}
Larry Smith.
\newblock On realizing complex bordism modules: {A}pplications to the stable
  homotopy of spheres.
\newblock {\em American Journal of Mathematics}, 92(4):793--856, 1970.

\bibitem{SE62}
Norman~Earl Steenrod and David Bernard~Alper Epstein.
\newblock Cohomology operations.
\newblock 1962.

\bibitem{Tod71}
Hirosi Toda.
\newblock On spectra realizing exterior parts of the {S}teenrod algebra.
\newblock {\em Topology}, 10(1):53--65, 1971.

\bibitem{Voe03}
Vladimir Voevodsky.
\newblock Reduced power operations in motivic cohomology.
\newblock {\em Publications Math{\'e}matiques de l'Institut des Hautes
  {\'E}tudes Scientifiques}, 98(1):1--57, 2003.

\end{thebibliography}

\end{document}